\def\pr{\mathbb{P}}
\def\Pr{\pr}
\def\P{\pr}
\def\E{\mathbb{E}}
\def\ind{\mathbb{I}}
\def\E{\mathbb{E}}
\def\P{\mathbb{P}}
\def\eps{\varepsilon}
\def\del{\delta}
\def\1{\mathbf{1}}
\def\lam {\lambda}
\def\tce{t_c + \eps}
\def\tce2{t_c + \frac{\eps}{2}}
\def\erdos{Erd\H{o}s }
\newtheorem*{theorem*}{Theorem}
\newtheorem{theorem}{Theorem}
\newtheorem{lemma}[theorem]{Lemma}
\newtheorem{cor}[theorem]{Corollary}
\newtheorem*{defn*}{Definition}
\newtheorem{prop}[theorem]{Proposition}
\newtheorem*{prop*}{Proposition}
\newtheorem{conj}[theorem]{Conjecture}
\newtheorem*{conj*}{Conjecture}
\newtheorem{claim}[theorem]{Claim}
\newtheorem*{fact*}{Fact}
\def\endofClaim{\hfill\scalebox{.6}{$\Box$}}
\newcommand{\oldqed}{}
\newenvironment{claimproof}[1][Proof]{
  \renewcommand{\oldqed}{\qedsymbol}
  \renewcommand{\qedsymbol}{\endofClaim}
  \begin{proof}[#1]
}{
  \end{proof}
  \renewcommand{\qedsymbol}{\oldqed}
}
\def\ind{^{\mathrm{ind}}}
\def\match{^{\mathrm{match}}}
\def\Zind{Z^{\mathrm{ind}}}
\def\Zmatch{Z^{\mathrm{match}}}
\def\alpind{\alpha^{\mathrm{ind}}}
\def\alpmatch{\alpha^{\mathrm{match}}}
\def\CLdn{\mathit{CL}_{d,n}}
\def\mperf{m_{\mathrm{perf}}}
\def\heawood{\mathit{HW}}
\def\heawoodn{\mathit{HW_n}}
\DeclarePairedDelimiter{\abs}{\lvert}{\rvert}
\begin{document}

\title{Tight bounds on the coefficients of partition functions via stability}
\author{Ewan Davies}
\author{Matthew Jenssen}
\author{Will Perkins}
\author{Barnaby Roberts}
\address{London School of Economics}
\email{\{e.s.davies,m.o.jenssen,b.j.roberts\}@lse.ac.uk }
\address{University of Birmingham}
\email{math@willperkins.org}
\thanks{WP supported in part by EPSRC grant EP/P009913/1.}
\date{\today}

\begin{abstract}
Partition functions arise in statistical physics and probability theory as the normalizing constant of Gibbs measures and in combinatorics and graph theory as graph polynomials. For instance the partition functions of the hard-core model and monomer-dimer model are the independence and matching polynomials respectively.     

We show how stability results follow naturally from the recently developed occupancy method for maximizing and minimizing physical observables over classes of regular graphs, and then show these stability results can be used to obtain tight extremal bounds on the individual coefficients of the corresponding partition functions.  

As applications, we prove new bounds on the number of independent sets and matchings of a given size in regular graphs.  For large enough graphs and almost all sizes, the bounds are tight and confirm the Upper Matching Conjecture of Friedland, Krop, and Markstr{\"o}m and a conjecture of Kahn on independent sets for a wide range of parameters. Additionally we prove tight bounds on the number of $q$-colorings of cubic graphs with a given number of monochromatic edges, and tight bounds on the number of independent sets of a given size in cubic graphs of girth at least $5$. 
\end{abstract}

\maketitle

\section{Introduction}\label{sec:intro}

The matching polynomial (or matching generating function) of a graph $G$ is the function
\[ \Zmatch_G(\lam) = \sum_{M \in \mathcal M(G)} \lam^{|M|}\,, \]
where the sum is over $\mathcal M(G)$, the set of all matchings in the graph $G$. Analogously, the independence polynomial of a graph $G$ is 
\[ \Zind_G(\lam) = \sum_{I \in \mathcal I(G)} \lam^{|I|}\,, \]
where $\mathcal I(G)$ is the set of all independent sets of $G$.  In statistical physics, $\Zmatch_G(\lam)$ and $\Zind_G(\lam)$ are the partition functions of the monomer-dimer and hard-core models respectively.  

 The following theorems give a tight upper bound on $\Zmatch_G(\lam)$ and $\Zind_G(\lam)$ over the family of all $d$-regular graphs.

\begin{theorem}[Davies, Jenssen, Perkins, Roberts \cite{Davies2015}]
\label{thm:matchPart}
For any $d$-regular graph $G$ and any $\lam>0$,
\[ \frac{1}{|V(G)|} \log \Zmatch_G(\lam) \le  \frac{1}{2d} \log \Zmatch_{K_{d,d}}(\lam)  \,.  \]
\end{theorem}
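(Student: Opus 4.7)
The plan is to apply the occupancy method, reducing the partition-function bound to a pointwise bound on a local observable. I would introduce the Gibbs (monomer--dimer) measure $\mu_\lam$ on matchings of $G$ with $\mu_\lam(M)\propto \lam^{|M|}$, and define the matching occupancy fraction
\[ \alpmatch(G,\lam) := \frac{2\,\E_{\mu_\lam}|M|}{|V(G)|}. \]
Since $\log \Zmatch_G(0)=0$ and
\[ \frac{d}{d\lam}\log \Zmatch_G(\lam) \;=\; \frac{\E_{\mu_\lam}|M|}{\lam} \;=\; \frac{|V(G)|}{2\lam}\,\alpmatch(G,\lam), \]
integrating from $0$ to $\lam$ reduces the theorem to the pointwise inequality $\alpmatch(G,\lam) \le \alpmatch(K_{d,d},\lam)$ for all $d$-regular $G$ and all $\lam>0$.

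Next, I would derive a local expression for $\alpmatch(G,\lam)$. Writing $\alpmatch(G,\lam)=d\cdot\E_e \P[e\in M]$ with $e$ a uniformly random edge of $G$, I condition on the sub-matching $M'$ obtained by deleting every edge of $M$ incident to an endpoint of $e=uv$. Enumerating the configurations on the ``star around $e$'' gives
\[ \P[e\in M \mid a,b] \;=\; \frac{\lam}{(1+\lam a)(1+\lam b)+\lam}, \]
where $a$ (resp.\ $b$) denotes the number of neighbors of $u$ (resp.\ $v$), other than $v$ (resp.\ $u$), that are unmatched in $M'$. Hence $\alpmatch(G,\lam)$ is a linear functional of the joint distribution $p=(p_{j,k})_{0\le j,k\le d-1}$ of $(a,b)$ induced by a uniformly random edge and $\mu_\lam$.

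Finally, I would set up a linear program: maximize this linear functional over distributions $p$ on $\{0,\dots,d-1\}^2$ subject to all linear constraints that must hold for any $d$-regular graph with any matching drawn from $\mu_\lam$. The crucial constraints come from equating different expressions for vertex-level marginals (e.g., the probability that a uniformly chosen neighbor of a vertex is unmatched, computed once as an edge statistic and once as a vertex statistic), so that $p$ is forced to be compatible with a consistent vertex-level marginal. By the symmetry of $K_{d,d}$, its induced $p$ is a natural candidate maximizer; the goal is to exhibit a nonnegative dual certificate verifying this. The objective here is \emph{pointwise} in $\lam$, so the dual multipliers may be chosen as functions of $\lam$.

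I expect the \textbf{main obstacle} to be choosing precisely the right list of linear constraints and producing the dual certificate: too few constraints and $K_{d,d}$ will not be the LP maximum, while too many may not be verifiable for arbitrary $d$-regular graphs. Once the pointwise occupancy bound is established, the theorem follows by the reduction described in the first paragraph.
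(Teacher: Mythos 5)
Your proposal follows the same route as the paper: reduce the free-energy bound to the pointwise occupancy-fraction inequality via the log-derivative identity and integration, then establish that inequality by the occupancy method (an LP over local-view distributions with a dual certificate). That is precisely how the present paper treats this theorem---the integration step appears inside its proof of Theorem~\ref{thm:stability}, and the occupancy-fraction inequality is Theorem~\ref{thm:indOcc}, cited from~\cite{Davies2015}---so the LP constraints and dual certificate you flag as the main obstacle are exactly the substantive content of that reference. One concrete caution on your LP variable: conditioning only on the free-neighbor counts $(a,b)$ does not give a well-defined objective, because two local views with the same $(a,b)$ can induce different values of $\P[e\in M]$. For instance if $e=uv$ lies in a triangle through a shared free neighbor $w$, the configuration containing both $uw$ and $vw$ is disallowed, so the denominator $(1+\lam a)(1+\lam b)+\lam$ overcounts; the paper's notion of local view $L$ records the induced subgraph around $e$ precisely to capture such cases, and they reappear in the proof of Lemma~\ref{lem:extraconstraint}.
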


\begin{theorem}[Kahn~\cite{kahn2001entropy,kahn2002entropy}, Galvin--Tetali~\cite{galvin2004weighted}, Zhao \cite{zhao2010number}]
\label{thm:indPart}
For any $d$-regular graph $G$ and any $\lam>0$,
\[ \frac{1}{|V(G)|} \log \Zind_G(\lam) \le  \frac{1}{2d} \log \Zind_{K_{d,d}}(\lam)  \,.  \]
\end{theorem}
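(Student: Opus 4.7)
The plan is to follow the well-known two-step strategy of Kahn, Galvin--Tetali, and Zhao: first establish the inequality for bipartite $d$-regular graphs via an entropy argument, then reduce the general case to the bipartite one using the bipartite double cover. Let $\tilde G = G \times K_2$ be the bipartite double cover of $G$, which is $d$-regular, bipartite, and has $2|V(G)|$ vertices. The key input is Zhao's inequality $\Zind_{\tilde G}(\lam) \ge \bigl(\Zind_G(\lam)\bigr)^2$, proved by a combinatorial bipartite-swap construction that injects pairs of independent sets of $G$ into independent sets of $\tilde G$ of the same total weight. Given the bipartite case, taking logarithms and dividing by $|V(G)|$ yields
\[
\frac{1}{|V(G)|}\log \Zind_G(\lam) \le \frac{1}{2|V(G)|}\log \Zind_{\tilde G}(\lam) \le \frac{1}{2d}\log \Zind_{K_{d,d}}(\lam).
\]

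For the bipartite case, assume $G$ is $d$-regular with parts $A, B$ and sample $I$ from the hard-core measure $\mu_{G,\lam}(I) \propto \lam^{|I|}$. Using the standard identity $\log \Zind_G(\lam) = H(I) + \E|I|\log \lam$, I would split $H(I) = H(I_A) + H(I_B \mid I_A)$ and bound each part locally: Shearer's inequality (each $a \in A$ lies in exactly $d$ neighborhoods $N(b)$) gives $d\,H(I_A) \le \sum_{b \in B} H(I_{N(b)})$, while the chain rule together with monotonicity of conditional entropy gives $H(I_B \mid I_A) \le \sum_{b \in B} H(I_b \mid I_{N(b)})$. Distributing $\E|I|\log\lam$ in a compatible way using $\E|I| = \sum_{b\in B}\bigl(\tfrac{1}{d}\E|I_{N(b)}| + \E I_b\bigr)$ produces a bound of the form
\[
\log \Zind_G(\lam) \le \sum_{b \in B}\Bigl[\tfrac{1}{d}\bigl(H(I_{N(b)}) + \E|I_{N(b)}|\log\lam\bigr) + H(I_b \mid I_{N(b)}) + \E I_b \log \lam\Bigr],
\]
where each summand is a local free-energy functional of the joint law of $(I_b, I_{N(b)})$, which is supported on independent sets of $K_{1,d}$.

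The main obstacle is showing that each such local functional is at most $\tfrac{1}{d}\log \Zind_{K_{d,d}}(\lam)$. This is a concave maximization over distributions on the independent sets of $K_{1,d}$, and by Lagrange duality (or by direct calculation) the optimum is attained by the local marginal of the hard-core measure on $K_{d,d}$; summing over $b \in B$ and using $|B| = |V(G)|/2$ then gives the bipartite bound. This local optimization is the most delicate step, and verifying tightness at the correct extremal distribution is where Kahn's and Galvin--Tetali's original arguments put their technical weight. Alternatively, one can sidestep the entropy route by proving the occupancy-fraction bound $\E|I|/|V(G)| \le \E|I(K_{d,d})|/(2d)$ uniformly in $\lam$ in the spirit of the paper's own framework, and then integrating $\lam\,\partial_\lam\log \Zind_G(\lam)/|V(G)|$ from $0$ to $\lam$ to recover the full statement.
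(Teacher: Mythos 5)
Your main route (entropy for the bipartite case plus Zhao's bipartite double-cover reduction) is the original proof of Kahn, Galvin--Tetali, and Zhao, and the outline you give is correct: the double-cover inequality $\Zind_{\tilde G}(\lam) \ge \bigl(\Zind_G(\lam)\bigr)^2$ reduces to the bipartite case, the identity $\log \Zind_G(\lam) = H(\mathbf I) + \E|\mathbf I|\log\lam$ converts the free energy into an entropy, Shearer plus the conditional chain rule produce the local decomposition you wrote (using $d$-regularity both in Shearer's cover and in $\sum_{b\in B}\E|I_{N(b)}| = d\,\E|I_A|$), and the remaining step is the concave maximization of the per-$b$ local functional over joint laws of $(I_b, I_{N(b)})$. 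That last step is indeed where the original arguments carry their technical weight, and you are right to flag it as the delicate point rather than wave it away.

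This is, however, not the route the present paper takes. The paper does not reprove Theorem~\ref{thm:indPart} at all: it cites the result and notes that Theorem~\ref{thm:indOcc} (the occupancy-fraction bound $\alpind_G(\lam) \le \alpind_{K_{d,d}}(\lam)$, from the occupancy method of~\cite{Davies2015}) gives an alternative derivation by integrating $\frac{1}{|V(G)|}\log\Zind_G(\lam) = \frac{d}{2}\int_0^\lam \alpind_G(t)\,t^{-1}\,dt$, exactly as carried out for matchings in the proof of Theorem~\ref{thm:stability}. You mention this alternative in your last sentence, so you are aware of it. The trade-off is that the entropy route works directly at the level of the partition function and needs the bipartite double cover to handle general $G$, whereas the occupancy-fraction route works uniformly over all $d$-regular graphs and reduces to a single pointwise inequality on the logarithmic derivative, which is then established by a finite linear program over local-view distributions; the latter is what makes the stability refinement (Proposition~\ref{prop:occ-stability} and Theorem~\ref{thm:stability}) accessible, and is why the paper adopts it.
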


In particular, if we set $\lam =1$, then the two theorems say that if $2d$ divides $n$, then the total number of matchings and the number of independent sets in any $d$-regular graph on $n$ vertices is at most that of the graph $H_{d,n}$ consisting of $n/2d$ copies of $K_{d,d}$.   For much more on such extremal problems for regular graphs, see the notes of Galvin~\cite{galvin2014three}, survey of Zhao~\cite{yufeiSurvey}, and paper of Csikv{\'a}ri~\cite{csikvari2016extremal}. 

In this paper we will address two strengthenings of results of the form above and how they are related.  The first possible strengthening of Theorems~\ref{thm:matchPart} and~\ref{thm:indPart} is that $H_{d,n}$ might maximize the polynomials $\Zmatch$ and $\Zind$ on the level of each individual coefficient.  Let $m_k(G)$ and $i_k(G)$ denote the number of matchings and independent sets of size $k$ respectively in a graph $G$. Then we can write $\Zmatch_G(\lam) = \sum_{k \ge 0} m_k(G) \lam^k$.   Kahn and Friedland, Krop, and Markstr{\"o}m conjectured the following.

\begin{conj}[Kahn \cite{kahn2001entropy}]
\label{conj:IndGivenSize}
Let $2d$ divide $n$. Then for any $d$-regular $G$ on $n$ vertices, and any $1\le k \le n/2$, 
\[ i_{k}(G) \le i_k(H_{d,n}) \,. \]
\end{conj}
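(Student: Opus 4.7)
The plan is to deduce Conjecture~\ref{conj:IndGivenSize} (for $G$ large enough and $k$ in an appropriate range) from a stability-strengthened version of Theorem~\ref{thm:indPart}, by choosing the activity $\lam$ so that the hard-core measure on $H_{d,n}$ is concentrated on independent sets of size exactly $k$. For each target $k$, let $\lam_k$ be the unique activity at which the hard-core model on $H_{d,n}$ has expected independent set size equal to $k$. Since $H_{d,n}$ is a disjoint union of $n/(2d)$ copies of $K_{d,d}$, the size of a random independent set is a sum of i.i.d.\ bounded integer random variables, and a local central limit theorem yields
\[
\frac{i_k(H_{d,n})\,\lam_k^{\,k}}{\Zind_{H_{d,n}}(\lam_k)} \;=\; \Theta\!\left(\frac{1}{\sqrt n}\right),
\]
uniformly for $k$ in any closed subinterval of the interior of the feasible-size range.

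On the other hand, the trivial bound $i_k(G)\lam_k^{\,k} \le \Zind_G(\lam_k)$, combined with Theorem~\ref{thm:indPart}, gives
\[
i_k(G) \;\le\; O(\sqrt n)\cdot i_k(H_{d,n}) \cdot \frac{\Zind_G(\lam_k)}{\Zind_{H_{d,n}}(\lam_k)},
\]
so the conjecture would follow once we know that $\Zind_G(\lam_k)/\Zind_{H_{d,n}}(\lam_k) = 1/\omega(\sqrt n)$ whenever $G \not\cong H_{d,n}$. The natural target is therefore a stability version of Theorem~\ref{thm:indPart}: there exists $\del = \del_{d,\lam}>0$ such that either $G\cong H_{d,n}$, or
\[
\tfrac{1}{|V(G)|}\log\Zind_G(\lam) \;\le\; \tfrac{1}{2d}\log\Zind_{K_{d,d}}(\lam) - \del,
\]
which makes the partition function ratio exponentially small in $n$ and swamps any polynomial loss.

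To produce such a $\del$ I would revisit the occupancy-method proof of Theorem~\ref{thm:indPart}. That proof bounds $\frac{1}{n}\log\Zind_G(\lam)$ by averaging a local quantity (essentially the expected contribution of a single vertex under the hard-core measure on $G$) and invokes a convex/Jensen-type inequality that is tight precisely when each closed neighborhood ``looks like'' the vertex side of $K_{d,d}$. Tracking the slack should yield a quantitative defect statement: unless at least $(1-\eta)|V(G)|$ vertices have essentially $K_{d,d}$-like neighborhoods, the global inequality loses $\Omega(\eta)$. A combinatorial rigidity step --- a $d$-regular graph in which almost every neighborhood is close to $K_{d,d}$ must be isomorphic to $H_{d,n}$, or close enough that a direct coefficient comparison finishes the job --- then completes the stability package.

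The main obstacle is precisely this stability package: the occupancy inequalities are convex combinations of delicate local terms, and extracting quantitative slack from ``not locally $K_{d,d}$'' and then upgrading local closeness into a global isomorphism is the subtle part. A secondary issue, responsible for the ``almost all sizes'' restriction in the abstract, is the behaviour for $k$ close to $0$ or close to the independence number $n/2$: there $\lam_k \to 0$ or $\infty$, the local central limit theorem degrades, and the $\sqrt n$ factor above becomes a larger polynomial or breaks entirely. These tail regimes will need separate treatment --- trivial counting or monotonicity bounds near the extremes, and an explicit check for any remaining narrow window of $k$ where neither the stability argument nor the trivial bound applies.
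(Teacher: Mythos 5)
Your high-level strategy (local CLT to relate $i_k$ to the partition function at a tuned activity $\lam_k$, plus a stability-strengthened version of Theorem~\ref{thm:indPart}) is indeed the one the paper pursues, and you correctly identify both obstacles. However, there is a genuine gap in the middle step that makes your version of the argument fail.

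Your argument needs $\Zind_G(\lam_k)/\Zind_{H_{d,n}}(\lam_k) = 1/\omega(\sqrt n)$ for every $d$-regular $G \not\cong H_{d,n}$, and you propose to get this from a dichotomy-type stability statement: either $G\cong H_{d,n}$ or the free energy per vertex drops by a fixed $\del>0$. That dichotomy is false. Take $G$ to be $H_{d,n}$ with a single $K_{d,d}$ component replaced by some other $2d$-vertex $d$-regular graph $G_0$. Then $\Zind_G(\lam)/\Zind_{H_{d,n}}(\lam) = \Zind_{G_0}(\lam)/\Zind_{K_{d,d}}(\lam)$, a constant strictly between $0$ and $1$, and the per-vertex free energy deficit is only $O(1/n)$. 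The correct stability statement (the paper's Theorem~\ref{thm:stability}) gives a deficit proportional to $\delta_\circ(G,K_{d,d})$, which is of order the fraction of vertices not in a $K_{d,d}$ component and can be as small as $\Theta(1/n)$. For such $G$, your trivial bound $i_k(G)\lam_k^k \le \Zind_G(\lam_k)$ plus the local CLT loses a factor of $\sqrt n$ that the stability gain (a constant here, not $\omega(\sqrt n)$) cannot possibly recover.

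The paper avoids this by never comparing $G$ to $H_{d,n}$ wholesale. It decomposes $G = G' \cup H$, where $H$ is the union of all $K_{d,d}$ components and $G'$ is the rest, and writes $i_k(G) = \sum_r i_r(G')\, i_{k-r}(H)$. The local CLT is applied only to the common factor $H$ (Lemma~\ref{lem:localLim}), which shows the ratios $i_{k-r}(H)/i_k(H)$ are close to $\lam^r$ for $r$ up to any fixed bound; this turns the sum over $r$ into essentially $\Zind_{G'}(\lam)\cdot i_k(H)$, and the analogous sum for $H_{d,n}$ into $\Zind_{H'}(\lam)\cdot i_k(H)$ with $H' = H_{d,|G'|}$. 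The $\sqrt n$ factor from the CLT cancels because it appears on both sides via the common $H$ part, and the comparison that remains is between $G'$ and $H'$ of the \emph{same} (possibly bounded) size, where stability gives a genuine multiplicative gap with no $\sqrt n$ to beat. When $G'$ is large, stability does give a super-polynomial gap and the cruder comparison works (the paper's \textbf{Large} case). A third case handles $k$ near $n/2$, where $\lam_k\to\infty$ and the CLT degrades, via Corollary~\ref{cor:bregmanStable}. Without this decomposition step your argument does not close; with it, the rest of your outline is essentially the paper's Section~\ref{sec:transfer}.

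One further note: the statement you were asked to prove is the full Conjecture~\ref{conj:IndGivenSize}, which remains open. Both you and the paper only obtain the restricted range $k > \eps n$ and $n$ large; you should flag explicitly that this is a partial result and not the conjecture in full.
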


\begin{conj}[Friedland, Krop, Markstr{\"o}m \cite{friedland2008number}]
\label{conj:UMC}
Let $2d$ divide $n$. Then for any $d$-regular $G$ on $n$ vertices, and any $1\le k \le n/2$, 
\[ m_{k}(G) \le m_k(H_{d,n}) \,. \]
\end{conj}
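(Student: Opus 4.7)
The plan is to derive coefficient-wise inequalities from the partition-function bound of Theorem~\ref{thm:matchPart}, initially losing only a polynomial-in-$n$ factor, and then to close this polynomial gap with a stability refinement of the occupancy method. For every $\lam > 0$, since $m_k(G)\lam^k$ is a single term of $\Zmatch_G(\lam)$, Markov's inequality combined with Theorem~\ref{thm:matchPart} gives
\[
m_k(G) \le \frac{\Zmatch_G(\lam)}{\lam^k} \le \frac{\Zmatch_{H_{d,n}}(\lam)}{\lam^k}.
\]
Choosing $\lam = \lam^*(k)$ so that the expected size of a $\lam$-weighted random matching in $H_{d,n}$ equals $k$, a saddle-point (Laplace-method) estimate shows the right-hand side equals $m_k(H_{d,n})\cdot O(\sqrt{n})$. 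Thus the partition-function inequality already proves the UMC up to a polynomial factor, and the remaining task is to remove it.

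The key new ingredient is a stability strengthening of Theorem~\ref{thm:matchPart}, of the form
\[
\log \Zmatch_G(\lam) \le \frac{|V(G)|}{2d}\log \Zmatch_{K_{d,d}}(\lam) \;-\; c(d,\lam)\,\delta(G)\,|V(G)|,
\]
where $\delta(G)$ quantifies how far $G$ is from a disjoint union of $K_{d,d}$'s --- e.g.\ the fraction of vertices not lying in a connected component isomorphic to $K_{d,d}$ --- and $c(d,\lam)>0$ is an explicit constant. I expect this to follow from a careful re-examination of the occupancy-method proof of Theorem~\ref{thm:matchPart}: that proof writes $\frac{1}{|V(G)|}\log\Zmatch_G(\lam)$ as an average of local functionals of the neighbourhoods of $G$ and bounds it above by a linear program whose unique maximizer is the local neighbourhood of $K_{d,d}$. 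Quantifying the LP slack at any vertex whose local structure is not that of $K_{d,d}$ and summing over all such vertices yields the stability estimate. Substituting this strengthened bound into the Markov step gives $m_k(G) \le m_k(H_{d,n})\cdot O(\sqrt{n})\cdot e^{-c\,\delta(G)\,n}$, which is at most $m_k(H_{d,n})$ whenever $\delta(G) \ge C\log n / n$.

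It then remains to treat the near-extremal case $\delta(G)\,n = O(\log n)$, where $G$ consists of $K_{d,d}$-components together with a sublinear exceptional set. Here $\Zmatch_G$ and $\Zmatch_{H_{d,n}}$ share a common factor $\Zmatch_{K_{d,d}}(\lam)^{(1-o(1))n/(2d)}$ and the comparison reduces to a finite (size $O(\log n)$) coefficient inequality between the two residual polynomials, which can be verified by a direct combinatorial argument exchanging matchings on the exceptional parts. The main obstacle --- and the reason one can only hope to prove the UMC for ``almost all'' sizes $k$ in this way --- is that both the $\sqrt{n}$ loss in the saddle-point step and the stability constant $c(d,\lam^*(k))$ degenerate as $\lam^*(k) \to 0$ or $\lam^*(k) \to \infty$, i.e.\ as $k$ approaches $0$ or $n/2$. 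For $k$ very close to $0$ the bound can be recovered by direct subgraph counts (for instance $m_1(G)=|E(G)|=dn/2=m_1(H_{d,n})$ trivially), but the near-saturation regime $k\approx n/2$ appears to require a genuinely different argument exploiting the structure of near-perfect matchings and seems to lie outside the scope of this scheme.
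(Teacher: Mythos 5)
First, a caveat about the target: the statement you are proving is Conjecture~\ref{conj:UMC}, which is \emph{not} proved in this paper. What the paper proves is the partial result Theorem~\ref{thm:exactslices}, valid for $n$ large and $k > \eps n$, and your proposal likewise aims only at a partial result. So I compare your scheme to the paper's proof of Theorem~\ref{thm:exactslices} for matchings.

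Your high-level strategy is the same as the paper's: a Markov-type bound $m_k(G) \le \Zmatch_G(\lam)/\lam^k$ combined with Theorem~\ref{thm:matchPart} loses an $O(\sqrt{n})$ factor (this is exactly the $2\sqrt{n}$-off bound from~\cite{Davies2015} that the paper cites), and the way to close the gap is a stability strengthening obtained by quantifying the slack in the occupancy-method linear program. That is precisely Proposition~\ref{prop:occ-stability} and Theorem~\ref{thm:stability}, so this part of your proposal is sound and matches the paper.

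The genuine gap is your ``near-extremal case.'' After the Markov step your exceptional set has to be as large as $\Omega(\log n)$ before the stability term $e^{-c\,\delta(G)n}$ can overcome the $\sqrt{n}$ loss, and you then assert that the remaining regime $\delta(G)n = O(\log n)$ ``reduces to a finite coefficient inequality \dots which can be verified by a direct combinatorial argument exchanging matchings on the exceptional parts.'' This is not an argument, and it is not straightforward: the distribution of a uniformly random size-$k$ matching of $G$, restricted to the exceptional part, is \emph{not} independent of the $K_{d,d}$ part, and a naive exchange argument cannot control it. The paper fills exactly this hole with two ingredients you do not have: (i) a local central limit theorem for the size of the matching on the $K_{d,d}$ part (Theorem~\ref{thm:gdedenko}, packaged as Lemma~\ref{lem:localLim}), which says that conditioning on total size $k$ effectively tilts the exceptional part by a fugacity $\lam^*(k)$ up to $1 \pm \del'$ multiplicative error, reducing the comparison to $\Zmatch_{G'}(\lam) \le \Zmatch_{H'}(\lam)$ (the \textbf{Small-2} case); and (ii) a stability version of Bregman's theorem (Corollary~\ref{cor:bregmanStable}) for the top coefficient, needed when $k$ is so close to $n/2$ that $\lam^*(k)$ is unbounded (the \textbf{Small-1} case). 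Moreover, in the paper's \textbf{Large} case the comparison is done by decomposing $m_k(G) = \sum_r m_r(G')m_{k-r}(H)$ and applying the LLT directly, which incurs only a $\mathrm{poly}(n_1)$ loss rather than your $\sqrt{n}$; this is what makes the threshold a \emph{constant} $N_1(d,\eps)$ rather than $\log n$, so the residual ``small'' cases really are of bounded size. Finally, your claim that the regime $k \approx n/2$ ``seems to lie outside the scope of this scheme'' is incorrect: the Bregman-stability corollary shows the scheme does reach $k = n/2$, and indeed Theorem~\ref{thm:exactslices} covers all $k \in (\eps n, n/2]$. It is the \emph{small-$k$} end ($k \le \eps n$) that the method genuinely does not reach, and which is why Conjecture~\ref{conj:UMC} remains open.
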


These conjectures were known to be true in a very small number of cases ($k \le 4$ and $k=n/2$).  Several approximate versions of the two conjectures have been proved by Carroll, Galvin, and Tetali~\cite{carroll2009matchings}, Ilinca and Kahn~\cite{ilinca2013asymptotics}, Perkins~\cite{perkins2016birthday}, and Davies, Jenssen, Perkins and Roberts~\cite{Davies2015}.  The first three bounds were in general off by a factor exponential in $n$, while the fourth bound was off by a factor $2 \sqrt{n}$.  The case $k=n/2$ for matchings follows from Bregman's theorem~\cite{bregman1973some} on the permanents of $0/1$ matrices with given row sums. (The case of independent sets in non-regular graphs with given minimum degree was proved by Gan, Loh, and Sudakov~\cite{gan2015maximizing}).   

Our first main result is that the conjectures of Kahn and Friedland, Krop, and Markstr{\"o}m hold for a wide range of parameters.
\begin{theorem}\label{thm:exactslices}
For all $d \geq 2$ and $ \eps >0$ there exists $N=N(d, \eps)$ such that the following holds.
Suppose that $n \geq N$ and $n$ is divisible by $2d$. Let $G$ be any $d$-regular graph on $n$ vertices. Then for all $k>\eps n$,
\begin{align}
i_k(G) &\leq i_k(H_{d,n})\,,\\
m_k(G) &\leq m_k(H_{d,n})\,.
\end{align}
\end{theorem}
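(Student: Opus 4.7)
The plan combines the global bound of Theorem~\ref{thm:indPart} (independent sets; matchings are handled in parallel using Theorem~\ref{thm:matchPart}) with a saddle-point extraction of individual coefficients and a stability strengthening of those bounds provided by the occupancy method. For $k$ with $k/n\in(\eps,1/2)$, let $\lam_k>0$ be the fugacity at which the expected size of a hard-core sample from $K_{d,d}$ is $2dk/n$; then $\lam_k$ is continuous in $k/n$ and stays in a compact range bounded away from $0$ and $\infty$ on any sub-interval of $(\eps,1/2)$ separated from $1/2$. Theorem~\ref{thm:indPart} and the trivial inequality $i_k(G)\lam_k^k\le\Zind_G(\lam_k)$ give
\[
  i_k(G)\le\lam_k^{-k}\,\Zind_{H_{d,n}}(\lam_k).
\]
A local central limit theorem for the independent-set size under the hard-core measure on $H_{d,n}$---a sum of $n/(2d)$ i.i.d.\ contributions, one per $K_{d,d}$ component---yields the complementary lower bound $i_k(H_{d,n})\lam_k^k=\Theta(n^{-1/2})\,\Zind_{H_{d,n}}(\lam_k)$. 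Hence this naive route only shows $i_k(G)=O(\sqrt n)\,i_k(H_{d,n})$, the $\sqrt n$ gap of~\cite{Davies2015}.

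To eliminate the $\sqrt n$ factor I would prove a stability version of Theorem~\ref{thm:indPart}: for every $\delta>0$ there exists $c=c(d,\eps,\delta)>0$ such that for every $d$-regular $G$ on $n$ vertices and every $\lam$ in the relevant compact range, either
\[
  \Zind_G(\lam)\le\Zind_{H_{d,n}}(\lam)\,e^{-cn},
\]
or all but at most $\delta n$ vertices of $G$ lie in components isomorphic to $K_{d,d}$. In the \emph{far} case the exponential saving beats the $\sqrt n$ loss outright and yields $i_k(G)<i_k(H_{d,n})$ for $n$ large. In the \emph{close} case decompose $G=G_1\sqcup G_2$, where $G_1$ is a disjoint union of $K_{d,d}$s on $n-r$ vertices and $G_2$ is a $d$-regular graph on $r\le\delta n$ vertices (with $r$ automatically a multiple of $2d$) containing no $K_{d,d}$ component; then
\[
  i_k(G)=\sum_j i_j(G_1)\,i_{k-j}(G_2), \qquad i_k(H_{d,n})=\sum_j i_j(G_1)\,i_{k-j}(H_{d,r}),
\]
so the comparison reduces term-by-term to showing $i_l(G_2)\le i_l(H_{d,r})$ for the indices $l$ carrying the dominant mass in the convolution. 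For $r\ge N(d,\eps)$ this is a smaller instance of the very theorem obtained by strong induction on $n$ (since $G_2$ is entirely ``bad'' it automatically falls into the far case at its own scale); for $r<N(d,\eps)$ only finitely many graphs $G_2$ arise and the inequality is verified by a finite case analysis.

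The main obstacle is the quantitative stability statement. The occupancy method bounds the occupancy fraction $\alpha_G(\lam):=(\lam/n)\,\partial_\lam\log\Zind_G(\lam)$ by $\alpha_{K_{d,d}}(\lam)$ through a local linear program at each vertex neighborhood, with equality precisely when $G$ is a disjoint union of $K_{d,d}$s. Upgrading this equality characterization to an explicit exponential gap requires showing that any non-$K_{d,d}$ local neighborhood contributes a strictly positive slack in that LP, uniformly over $\lam$ in the relevant range, and then aggregating these local slacks via the identity $\log\Zind_G(\lam)=n\int_0^\lam \alpha_G(s)\,ds/s$ into a global $e^{-cn}$ saving whenever a linear fraction of vertices lies outside $K_{d,d}$ components. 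A separate boundary analysis handles $k/n$ close to $1/2$, where $\lam_k$ diverges and the LCLT variance shrinks.
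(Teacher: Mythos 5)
Your overall strategy---combine a stability strengthening of the occupancy method with a local CLT for the i.i.d.\ per-component contributions in $H_{d,n}$---is the same strategy the paper uses. You even identify correctly why the naive argument gives only an $O(\sqrt n)$ factor, and that stability must make up the difference. But the execution of the ``close'' case has a gap that the paper's proof is specifically designed to avoid.

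The problem is the claimed term-by-term reduction. After writing
$i_k(G)=\sum_j i_j(G_1)\,i_{k-j}(G_2)$ and $i_k(H_{d,n})=\sum_j i_j(G_1)\,i_{k-j}(H_{d,r})$,
you propose to deduce $i_k(G)\le i_k(H_{d,n})$ from $i_l(G_2)\le i_l(H_{d,r})$ ``for the indices $l$ carrying the dominant mass,'' supplying that inequality by strong induction for $r\ge N(d,\eps)$ and by ``finite case analysis'' for $r<N(d,\eps)$. Neither leg holds. The inductive hypothesis only covers $l>\eps r$, but the dominant window around $l\approx kr/n$ (which may be barely above $\eps r$) spreads by $\Theta(\sqrt r)$, so parts of it fall outside the range where the theorem applies; and the non-dominant indices, where $i_l(G_2)$ can exceed $i_l(H_{d,r})$, need a quantitative argument that they contribute negligibly, which you do not supply. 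Worse, for $r<N(d,\eps)$ the inequality $i_l(G_2)\le i_l(H_{d,r})$ for all $l$ is precisely Conjecture~\ref{conj:IndGivenSize} restricted to $r$-vertex graphs---an open problem known only for $l\le 4$ and $l=r/2$---so ``verified by a finite case analysis'' is not something you can invoke. There is also a quantitative mismatch in the far/close split: with a threshold of $\delta n$, the far-case gain from global stability is only $e^{-c n_1}$ where $n_1=\delta_\circ(G,K_{d,d})\cdot n$, which fails to beat the $\sqrt n$ loss once $n_1=o(\log n)$, while the close case is supposed to cover everything up to $\delta n$.

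What the paper does instead is exactly what sidesteps all of this. It splits on whether $n_1$, the number of vertices \emph{outside} $K_{d,d}$ components, exceeds a fixed constant $N_1$ (not a fraction of $n$), and crucially applies the stability theorem to $G'$, the non-$K_{d,d}$ part, where $\delta_\circ(G',K_{d,d})$ is bounded away from zero regardless of $n_1$. For $n_1\le N_1$ and $k$ away from $n/2$, it uses the local CLT (Lemma~\ref{lem:localLim}) to replace each factor $m_{k-r}(H)$ by $\lam^r m_k(H)$ up to a $(1\pm\del')$ factor, so the convolution collapses to $\Zmatch_{G'}(\lam)\cdot m_k(H)$ and the comparison becomes $\Zmatch_{G'}(\lam)$ versus $\Zmatch_{H_{d,n_1}}(\lam)$, which stability (Theorem~\ref{thm:stability}) decides with a multiplicative constant strictly less than one. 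No coefficient-by-coefficient inequality on the small graph is ever needed. For $n_1>N_1$, the paper bounds $m_k(G)$ by $(n_1/2+1)$ times its maximal summand, lower bounds $m_k(H_{d,n})$ by a single well-chosen summand, and again compares partition functions: the exponential gain $e^{\Omega(n_1)}$ from applying stability to $G'$ swamps the polynomial-in-$n_1$ losses. Finally, the regime $k\ge(1-\eps')n/2$, which you defer to an unspecified ``boundary analysis,'' is treated separately using a stability version of Bregman's theorem (Corollary~\ref{cor:bregmanStable}): there only the top coefficient $m_{n_1/2}(G')$ matters, and it is smaller than $m_{n_1/2}(H_{d,n_1})$ by a definite constant factor. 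You would need to supply both the partition-function-level comparison and the Bregman-stability boundary case to make your proof go through.
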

 In other words, for sufficiently large graphs, and almost all values of $k$, Conjectures~\ref{conj:IndGivenSize} and~\ref{conj:UMC} hold.

The second strengthening of Theorems~\ref{thm:matchPart} and~\ref{thm:indPart} we consider is the question of uniqueness and stability.  The proofs of those theorems imply that equality is attained only by $K_{d,d}$ and unions of copies of $K_{d,d}$.  The combinatorial notion of \textit{stability} goes beyond this and states that any graph with a near extremal matching or independence polynomial must be `close' to one of the extremal graphs under some natural distance on graphs.  In extremal combinatorics, the theory of stability for Tur\'an-type problems was developed by \erdos and Simonovits \cite{erdos1966some,simonovits1968method}.  Stability has also proved very useful in the extremal theory of dense graphs and graph limits~\cite{pikhurko2010analytic}, and in other areas of combinatorics (e.g.~\cite{friedgut2008measure,keevash2004stability,mubayi2007structure}). 

Let $\delta_{\circ}(G,H)$ denote the sampling distance between two bounded degree graphs (see Section~\ref{sec:stable} below and Lov{\'a}sz's book on graph limits~\cite{lovasz2012large}). This is a distance function that metrizes Benjamini--Schramm convergence~\cite{benjamini2001recurrence}.  Our next result is that Theorems~\ref{thm:matchPart} and~\ref{thm:indPart} are stable under the $\delta_{\circ}(\cdot,\cdot)$ distance.
\begin{theorem}\label{thm:stability}
For any $d\ge 2$ there exist continuous functions $s\match(d, \lam)$, $s\ind(d ,\lam)$ which satisfy $s\match(d,0)=s\ind(d,0)=0$, and which are strictly increasing in $\lam$ such that the following holds.
For any $d$-regular graph $G$, 
\begin{align*}
\frac{1}{|V(G)|} \log \Zmatch_G(\lambda) &\leq \frac{1}{2d} \log\Zmatch_{K_{d,d}}(\lam) - s\match(d,\lam) \cdot\delta_{\circ}(G,K_{d,d})\,, \\
\frac{1}{|V(G)|} \log \Zind_G(\lambda) &\leq  \frac{1}{2d} \log \Zind_{K_{d,d}}(\lam) - s\ind(d,\lam) \cdot\delta_{\circ}(G,K_{d,d}) \,.
\end{align*}
\end{theorem}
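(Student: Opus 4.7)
My plan is to prove a stronger stability estimate at the level of the \emph{occupancy fractions}
\[
\alpmatch_G(\lam) := \frac{2\lam (\Zmatch_G)'(\lam)}{|V(G)|\,\Zmatch_G(\lam)}, \qquad \alpind_G(\lam) := \frac{\lam (\Zind_G)'(\lam)}{|V(G)|\,\Zind_G(\lam)},
\]
(expected density of matched vertices and of occupied vertices under the respective Gibbs measures) and then integrate in $\lam$. Indeed, $\frac{\partial}{\partial \lam}\log Z_G(\lam) = |V(G)|\,\alpha_G(\lam)/(C\lam)$ with $C\in\{1,2\}$, so
\[
\frac{1}{|V(G)|}\log Z_G(\lam) = \int_0^\lam \frac{\alpha_G(t)}{Ct}\,dt.
\]
Thus if I can prove an occupancy-level stability bound $\alpha_G(t) \le \alpha_{K_{d,d}}(t) - \tilde s(d,t)\cdot \delta_{\circ}(G, K_{d,d})$ with $\tilde s(d,t)>0$ for $t>0$, then $s(d,\lam) := \int_0^\lam \tilde s(d,t)/(Ct)\,dt$ is automatically continuous, strictly increasing in $\lam$, and vanishes at $\lam=0$, giving the required form.

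To obtain the occupancy-level stability, I would revisit the occupancy method used in \cite{Davies2015} for Theorem~\ref{thm:matchPart} and in (Davies--Jenssen--Perkins--Roberts-style reformulations of) Theorem~\ref{thm:indPart}. There one writes $\alpha_G(\lam) = \tfrac{1}{|V(G)|}\sum_{v} F(v;\lam)$, where $F(v;\lam)$ depends only on the Gibbs marginals on a small neighborhood of $v$, and the pointwise inequality $F(v;\lam) \le \alpha_{K_{d,d}}(\lam)$ is obtained as the optimum value of a finite linear or convex program with $K_{d,d}$'s local view as the unique optimizer. I would strengthen this by quantifying the slack: using strict feasibility and smoothness of the program near its optimum, I expect to produce a defect $\Delta(v)\ge 0$ (vanishing iff the local view at $v$ matches that of $K_{d,d}$) and a continuous positive function $c(d,\lam)$ with
\[
F(v;\lam) \le \alpha_{K_{d,d}}(\lam) - c(d,\lam)\cdot \Delta(v),
\]
which on averaging gives $\alpha_G(\lam)\le \alpha_{K_{d,d}}(\lam) - c(d,\lam)\cdot \overline{\Delta}(G)$.

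The main obstacle is comparing $\overline{\Delta}(G) := |V(G)|^{-1}\sum_v \Delta(v)$ to the sampling distance $\delta_{\circ}(G, K_{d,d})$. Since $K_{d,d}$ is vertex-transitive and finite, its $r$-ball distribution is a point mass on a fixed rooted graph $B_r$, so $\delta_{\circ}(G, K_{d,d})$ is essentially the geometrically weighted sum over $r$ of the total variation distance between the $r$-ball distribution of $G$ and $\delta_{B_r}$. Meanwhile the Gibbs marginals entering $F(v;\lam)$ can be approximated, up to exponentially small error in the radius, by quantities depending only on the $r$-ball around $v$---for matchings this follows from strong spatial mixing of the monomer--dimer model on bounded-degree graphs, and for independent sets from careful analysis of how marginals on $K_{d,d}$-like local structure transport. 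Truncating at a radius $r=r(\lam)$ chosen large enough in terms of $\lam$, and absorbing the truncation error into $c(d,\lam)$, gives $\overline{\Delta}(G) \gtrsim \delta_{\circ}(G, K_{d,d})$ for $\lam$ in any compact subinterval of $(0,\infty)$, yielding the claimed $\tilde s(d,\lam)$; continuity and the correct $\lam=0$ behaviour follow from the construction. The most technical point will be the spatial-mixing step for independent sets at large $\lam$, which may require restricting attention to graphs $G$ with $\delta_{\circ}(G, K_{d,d})$ small enough to inherit the relevant local bipartite structure, and then extending to all $G$ by a trivial bound when $\delta_{\circ}(G, K_{d,d})$ is bounded away from zero.
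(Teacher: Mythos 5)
Your overall architecture --- establish stability for the occupancy fraction and then integrate in $\lam$ --- is exactly what the paper does (Proposition~\ref{prop:occ-stability} feeds into the integration in Section~\ref{sec:deducestability}), and the integration step is correct. The gaps lie in how you propose to obtain the occupancy-level stability. The occupancy method does not establish the pointwise bound $F(v;\lam)\le\alpmatch_{K_{d,d}}(\lam)$ you take as its starting point: the LP of~\cite{Davies2015} has as its variables the probability $p(L)$ of each local view $L$, its constraints are consistency relations (two different ways of computing the occupancy fraction agree) that hold only in aggregate over vertices or edges, and its objective is the averaged occupancy fraction, so it cannot produce a vertex-by-vertex inequality and does not attempt to. The defect must instead be extracted from the LP itself. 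The paper does this by computing the dual optimum $q^*$ and the slack vector $s_L=A^Tq^*-a_L$, observing that complementary slackness forces any extremal local-view distribution onto the zero-slack set $\mathcal L_d^*$, and then adding one additional primal constraint requiring mass at least $\delta f(d,\lam)$ outside $\mathcal L_d^*$; the augmented dual then yields the stability bound with penalty $\theta^*=\min_{L\notin\mathcal L_d^*}s_L>0$.

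Second, your proposed comparison of the average defect to $\delta_\circ(G,K_{d,d})$ via strong spatial mixing would fail. For the hard-core model spatial mixing is not available uniformly in $\lam$: above the uniqueness threshold on the infinite $d$-regular tree it breaks down on $d$-regular graphs, yet Theorem~\ref{thm:stability} must hold for all $\lam>0$. The ``trivial bound'' you propose as a fallback when $\delta_\circ(G,K_{d,d})$ is bounded away from $0$ is not trivial either, since $\alpind_G(\lam)$ is not continuous in sampling distance and there is no compactness argument giving a uniform gap over that class of graphs. The paper avoids mixing entirely: Lemma~\ref{lem:extraconstraint} notes that (up to constants) $\delta_\circ(G,K_{d,d})$ is simply the fraction of vertices or edges of $G$ not lying in a $K_{d,d}$ component, and shows that for any edge $e$ not in a $K_{d,d}$ component an explicit local event forcing a local view outside $\mathcal L_d^*$ occurs with probability bounded below by an explicit function of $d$ and $\lam$, using only the elementary marginal bound $\Pr(e\in\mathbf M)\le\lam/(1+\lam)$ applied a bounded number of times. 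No mixing estimate or truncation-radius analysis is required, and the argument works at every $\lam>0$.
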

 Up to constant factors $\delta_{\circ}(G,K_{d,d})$ is simply the fraction of vertices of $G$ that are not in a $K_{d,d}$ component, and so the reader may think of $\delta_{\circ}(G,K_{d,d})$ in this way, but in the general setting of extremal problems for bounded degree graphs, using a distance compatible with Benjamini--Schramm convergence is very natural (see e.g. \cite{csikvari2014lower,lelarge2017counting}) and so we stick with this notation. 

A stability version of Theorem~~\ref{thm:indPart} for independent sets at $\lam=1$ was recently proved by Dmitriev and Dainiak~\cite{dmitriev2016uniqueness} by examining the details of the entropy method proof.  Our approach is more probabilistic and proceeds via the occupancy method of~\cite{Davies2015} and properties of linear programming. It also yields monotonicity of the functions $s\match(d,\lam), s\ind(d,\lam)$ in $\lam$ which is helpful in proving the results below and does not obviously follow from entropy methods.

The method we use to prove extremality and stability via probabilistic observables and linear programming, and the method we use to deduce exact bounds on individual coefficients from stability versions of extremal results are both generally applicable.  We provide two further examples to illustrate this.

Let $\heawood$ be the Heawood graph, the unique $(3,6)$-cage graph (see Figure~\ref{fig:Heawood}).  For $n$ divisible by $14$, let $\heawoodn$ be the graph consisting of a union of $n/14$ copies of $\heawood$. Perarnau and Perkins~\cite{perarnau2016counting} proved that $\frac{1}{|V(G)|} \log \Zind_G(\lam)$ is maximized over all cubic graphs of girth at least $5$ by $\heawood$.  Here we use this and a corresponding stability result to prove tight upper bounds on the number of independent sets of a given size in such graphs.
\begin{theorem}\label{thm:girthslices}
For all $\eps >0$ there exists $N(\eps)$ so that the following is true. Suppose that $n \geq N$ and $n$ is divisible by  $14$. Let $G$ be any $3$-regular graph of girth at least $5$ on $n$ vertices. Then for all $k > \eps n$,
\begin{align}
i_k(G) &\leq i_k(\heawoodn)\,.
\end{align}
\end{theorem}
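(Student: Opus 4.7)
The plan is to adapt the two-step template used to derive Theorem~\ref{thm:exactslices} from Theorem~\ref{thm:stability}: first prove a stability analogue of the Perarnau--Perkins theorem for $3$-regular graphs of girth at least $5$, then convert it into bounds on $i_k$ via the elementary tilt $i_k(G)\le\lam^{-k}\Zind_G(\lam)$ combined with a local central limit theorem on $\heawoodn$.

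For the stability step, I would show that there is a continuous function $s(\lam) > 0$ for $\lam > 0$, strictly increasing and vanishing at $\lam=0$, with
\[
\frac{1}{|V(G)|}\log\Zind_G(\lam) \le \frac{1}{14}\log\Zind_{\heawood}(\lam) - s(\lam)\cdot\delta_{\circ}(G,\heawood)
\]
for every such $G$. This is a direct adaptation of the LP-duality / occupancy method used to prove Theorem~\ref{thm:stability}: one writes the expected occupancy fraction under the hard-core Gibbs measure as a linear functional of the local neighborhood statistics of $G$, observes via the Perarnau--Perkins dual certificate that the maximum of this functional subject to $3$-regularity and girth $\geq 5$ is attained uniquely at the Heawood local profile, and a complementary-slackness / perturbation analysis supplies a strictly positive penalty for deviations. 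Integrating this occupancy inequality in $\lam$ then converts it into the stated bound on $\log\Zind_G$.

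For the coefficient step, fix $k > \eps n$ and let $\lam_k$ be the fugacity at which the hard-core Gibbs measure on $\heawoodn$ has expected independent-set size $k$; the condition $k > \eps n$ keeps $\lam_k$ in a compact subinterval of $(0,\infty)$ independent of $n$. Since this Gibbs measure factorises as a product of $n/14$ identical measures on the $14$-vertex Heawood graph, a standard local central limit theorem gives $i_k(\heawoodn) \geq c(\eps)\,\lam_k^{-k}\,\Zind_{\heawoodn}(\lam_k)\cdot n^{-1/2}$. Setting the threshold $\eta_n = C(\eps)\log n/n$, the case $\delta_{\circ}(G,\heawoodn) \geq \eta_n$ is dispatched by combining this lower bound with stability: $i_k(G) \leq \lam_k^{-k}\Zind_G(\lam_k) \leq \lam_k^{-k}\Zind_{\heawoodn}(\lam_k)\cdot e^{-s(\lam_k)C\log n}$, which for $C$ large enough beats the $n^{1/2}$ factor and forces $i_k(G) < i_k(\heawoodn)$.

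The remaining case $\delta_{\circ}(G,\heawoodn) < \eta_n$ requires a combinatorial exchange. Because $\heawood$ has diameter $3$ on $14$ vertices, a ball of radius $4$ around a vertex $v$ in $G$ isomorphic to the corresponding ball in $\heawood$ forces the connected component of $v$ to be a Heawood graph; hence smallness of $\delta_{\circ}$ forces all but $O(\log n)$ vertices of $G$ to lie in Heawood components, so we may write $G = H'\sqcup G'$ with $H'$ a disjoint union of Heawoods and $|V(G')| = O(\log n)$. If $G'=\emptyset$ then $G=\heawoodn$; otherwise I would compare $i_k(G)=\sum_j i_j(H')\,i_{k-j}(G')$ against the analogous convolution after replacing $G'$ by $|V(G')|/14$ Heawoods (absorbing the at most $13$ leftover vertices into the adjustment), using the local central limit estimate on $H'$ together with the strict Perarnau--Perkins inequality on any non-Heawood component of $G'$ to obtain $i_k(G) < i_k(\heawoodn)$. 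I expect the main obstacle to be the stability step: extracting a quantitative, continuous, monotone penalty $s(\lam) > 0$ from the Perarnau--Perkins occupancy LP is the nontrivial input, while the LCLT and exchange arguments are straightforward adaptations of the corresponding steps in the proof of Theorem~\ref{thm:exactslices}.
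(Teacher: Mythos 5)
Your overall architecture matches the paper's: prove a stability version of the Perarnau--Perkins occupancy bound via LP duality (the paper's Proposition~\ref{prop:girth-stability}), integrate to get stability for $\log\Zind$, then transfer to coefficients using a local central limit theorem on $\heawoodn$ and a decomposition of $G$ into its Heawood and non-Heawood parts. The stability step as you sketch it is exactly what the paper does.

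However, the transfer step has a genuine gap. You claim that ``the condition $k>\eps n$ keeps $\lam_k$ in a compact subinterval of $(0,\infty)$ independent of $n$.'' This gives only a \emph{lower} bound on $\lam_k$. Since $\heawood$ is bipartite with independence number $7 = 14/2$, the expected occupancy $\alpind_{\heawood}(\lam)$ tends to $1/2$ as $\lam\to\infty$, so $\lam_k\to\infty$ as $k\to n/2$. In this regime the per-Heawood variance $\sigma^2(\lam_k)\to 0$, the Gnedenko local limit constants degrade, and your lower bound $i_k(\heawoodn)\ge c(\eps)\lam_k^{-k}\Zind_{\heawoodn}(\lam_k)\,n^{-1/2}$ ceases to hold with $c(\eps)$ uniform. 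This is precisely why the paper introduces a third case (\textbf{Small-1} in Section~\ref{sec:matchslices}) for $k\ge(1-\eps')n/2$, handled not by the LLT but by an argument about the top coefficient: a constant-factor gap at the top coefficient (a consequence of the stability version of the Bregman-type bound, cf.\ Corollary~\ref{cor:bregmanStable}) together with the fact that $i_{k+1}/i_k$ is tiny for $k$ near the maximum. Your two-case split on $\delta_\circ$ with a threshold of order $\log n/n$ is a reasonable alternative to the paper's constant threshold $N_1$ on $|V(G')|$, but in both cases you still need an additional split on $k$ near the top, and neither of your two cases supplies it.

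A second, more minor, point: in your ``small'' case the comparison of convolutions $\sum_j i_j(H')\,i_{k-j}(G')$ against the Heawood-replaced version again uses the LLT on $H'$ (or on $\heawoodn$) to translate $\lam^r$-tilts into shifts of the index by $r$, so it inherits the same restriction $k\le(1-\eps')n/2$; the paper's Lemma~\ref{lem:localLim} is stated with exactly this upper bound on $k$ for this reason. So that case too is incomplete as written without the separate large-$k$ argument.
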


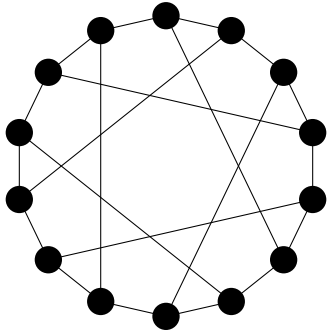
\begin{figure}
\begin{tikzpicture}
    \node[shape=circle,fill=black] (A1) at (0,2) {};
    \node[shape=circle,fill=black] (A2) at (0,-2) {};
   \node[shape=circle,fill=black] (C1) at (0.868,1.802) {};
    \node[shape=circle,fill=black] (C2) at (0.868,-1.802) {};
   \node[shape=circle,fill=black] (C3) at (-0.868,1.802) {};
    \node[shape=circle,fill=black] (C4) at (-0.868,-1.802) {};
    
    \node[shape=circle,fill=black] (D1) at (1.564,1.247) {};
    \node[shape=circle,fill=black] (D2) at (1.564,-1.247) {};
   \node[shape=circle,fill=black] (D3) at (-1.564,1.247) {};
    \node[shape=circle,fill=black] (D4) at (-1.564,-1.247) {};
    \node[shape=circle,fill=black] (E1) at (1.95,0.445) {};
    \node[shape=circle,fill=black] (E2) at (1.95,-0.445) {};
    \node[shape=circle,fill=black] (E3) at (-1.95,0.445) {};
    \node[shape=circle,fill=black] (E4) at (-1.95,-0.445) {};

\draw plot [smooth, tension=2] coordinates { (A1) (C1) };
\draw plot [smooth, tension=2] coordinates { (C1) (D1) };
\draw plot [smooth, tension=2] coordinates { (D1) (E1) };
\draw plot [smooth, tension=2] coordinates { (E1) (E2) };
\draw plot [smooth, tension=2] coordinates { (E2) (D2) };
\draw plot [smooth, tension=2] coordinates { (D2) (C2) };
\draw plot [smooth, tension=2] coordinates { (C2) (A2) };
\draw plot [smooth, tension=2] coordinates { (A2) (C4) };
\draw plot [smooth, tension=2] coordinates { (C4) (D4) };
\draw plot [smooth, tension=2] coordinates { (D4) (E4) };
\draw plot [smooth, tension=2] coordinates { (E4) (E3) };
\draw plot [smooth, tension=2] coordinates { (E3) (D3) };
\draw plot [smooth, tension=2] coordinates { (D3) (C3) };
\draw plot [smooth, tension=2] coordinates { (C3) (A1) };

\draw plot [smooth, tension=2] coordinates { (E4) (C1) };
\draw plot [smooth, tension=2] coordinates { (E3) (C2) };
\draw plot [smooth, tension=2] coordinates { (E1) (D3) };
\draw plot [smooth, tension=2] coordinates { (E2) (D4) };
\draw plot [smooth, tension=2] coordinates { (C3) (C4) };
\draw plot [smooth, tension=2] coordinates { (A1) (D2) };
\draw plot [smooth, tension=2] coordinates { (A2) (D1) };
\end{tikzpicture}
\caption{The Heawood graph, $\heawood$, the smallest 3-regular graph with girth 6.}
\label{fig:Heawood}
\end{figure}

Next we consider colorings of regular graphs.  Galvin and Tetali~\cite{galvin2004weighted} conjectured (and proved for bipartite graphs) that $H_{d,n}$ maximizes the number of proper $q$-colorings of any $d$-regular graph on $n$ vertices.  The general conjecture is still open, but the case $d=3$ and arbitrary $q$ was proved in~\cite{davies2016extremes}.  To view this conjecture and these results in the setting of partition functions and graph polynomials, we consider the $q$-color Potts model from statistical physics.    Let $c^q_k(G)$ be the number of $q$-colorings of the vertices of $G$ with exactly $k$ monochromatic edges.  Then the function
\[ Z^q_G(\lam) = \sum_{k \ge 0} c^q_k(G) \lam^k   \]
is the Potts model partition function (the standard formulation has $\lam$ replaced by $e^{-\beta}$). The conjecture on proper colorings is that $H_{d,n}$ maximizes the coefficient $c_0^q(G)$ over $n$-vertex, $d$-regular graphs.  Here we show that for cubic graphs, $H_{3,n}$ maximizes $c_k^q(G)$ for a wide range of $k$.
\begin{theorem}\label{thm:colorslices}
For all $q \ge 3$, $\eps>0$, there exists $N(q, \eps)$ so that the following is true.  Suppose that $n \geq N$ and $n$ is divisible by $6$. Let $G$ be any $3$-regular graph on $n$ vertices. Then for $ 0 \le k \le (1-\eps) 3n/2q $ we have
\begin{align*}
c^q_k(G) &\le c^q_k(H_{3,n})\,.
\end{align*}
\end{theorem}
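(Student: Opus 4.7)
The plan is to mirror the strategy of Theorems~\ref{thm:exactslices} and~\ref{thm:girthslices}: first establish an extremality-with-stability bound for the Potts partition function $Z^q_G(\lam)$ over cubic graphs; then choose a tilting parameter $\lam^*$ making configurations with $k$ monochromatic edges dominant; and finally absorb the $\sqrt{n}$ loss from the local central limit theorem via the exponential stability gap, with a direct argument in the narrow near-extremal regime.

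\textbf{Step 1 (Extremality and stability).} The case $\lam = 0$ (proper colorings) is the main theorem of \cite{davies2016extremes}. Extending the occupancy-method proof there to all $\lam \ge 0$ yields the partition-function bound $\tfrac{1}{n}\log Z^q_G(\lam) \le \tfrac{1}{6}\log Z^q_{K_{3,3}}(\lam)$ for every cubic $G$. A quantitative refinement of the linear-programming duality underlying this bound, analogous to that used in the proof of Theorem~\ref{thm:stability}, produces a continuous function $s^q(\lam)$, strictly positive on $(0,1)$ and bounded below on compact subintervals, such that
\[
\tfrac{1}{|V(G)|}\log Z^q_G(\lam) \;\le\; \tfrac{1}{6}\log Z^q_{K_{3,3}}(\lam) - s^q(\lam)\,\delta_\circ(G,K_{3,3}).
\]

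\textbf{Step 2 (Tilting and the main coefficient bound).} For $0 \le k \le (1-\eps)\,3n/(2q)$, let $f_q(\lam) = \lam\tfrac{d}{d\lam}\log Z^q_{K_{3,3}}(\lam)$ be the expected number of monochromatic edges on $K_{3,3}$ under the Potts measure at $\lam$; it is smooth and strictly increasing with $f_q(0) = 0$ and $f_q(1) = 9/q$. Choose $\lam^* \in [0,1)$ by $(n/6)f_q(\lam^*) = k$; the constraint on $k$ forces $\lam^* \le 1 - \eta(\eps,q)$ and hence $s^q(\lam^*) \ge s_0 = s_0(\eps,q) > 0$. The trivial bound $c^q_k(G) \le (\lam^*)^{-k}Z^q_G(\lam^*)$, combined with a local central limit theorem for the sum of the i.i.d.\ per-$K_{3,3}$ contributions to $M$ on $H_{3,n}$, gives $c^q_k(H_{3,n}) \ge \tfrac{C(\eps,q)}{\sqrt{n}}(\lam^*)^{-k}Z^q_{H_{3,n}}(\lam^*)$. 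Applying Step~1 at $\lam^*$,
\[
\frac{c^q_k(G)}{c^q_k(H_{3,n})} \;\le\; \frac{\sqrt{n}}{C(\eps,q)}\,\exp\!\bigl(-s_0\,n\,\delta_\circ(G,K_{3,3})\bigr),
\]
which is at most $1$ once $\delta_\circ(G,K_{3,3}) \ge \tfrac{\log n}{s_0 n}$ and $n$ is large.

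\textbf{Step 3 (Near-extremal regime).} If instead $\delta_\circ(G,K_{3,3}) < \tfrac{\log n}{s_0 n}$, we may write $G = H_{3,n-m} \sqcup G'$ with $m = O(\log n)$ divisible by $6$ and $G'$ a cubic graph containing no $K_{3,3}$ component, so that
\[
c^q_k(G) - c^q_k(H_{3,n}) \;=\; \sum_{l=0}^{3m/2}\bigl[c^q_l(G') - c^q_l(H_{3,m})\bigr]\,c^q_{k-l}(H_{3,n-m}).
\]
Two edge-counting identities give $\sum_l c^q_l(G') = \sum_l c^q_l(H_{3,m}) = q^m$ and $\sum_l l\,c^q_l(G') = \sum_l l\,c^q_l(H_{3,m}) = (3m/2)q^{m-1}$, while applying the extremal inequality at $\lam$ slightly below $1$ (where $Z^q_{H_{3,m}}(\lam) - Z^q_{G'}(\lam)$ has a double zero at $\lam = 1$) yields $\sum_l l^2[c^q_l(G') - c^q_l(H_{3,m})] \le 0$. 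These three identities match the constant, linear, and quadratic parts of $c^q_{k-l}(H_{3,n-m})$ Taylor-expanded around $l = 3m/(2q)$; a saddle-point estimate of this expansion --- using that $k$ lies $\Theta(n)$ below the mode of $c^q_{k-l}(H_{3,n-m})$, so the second-derivative term is positive --- gives the leading term the correct sign, with higher-order remainders controllable since $m = O(\log n)$. The principal obstacle is the quantitative stability of Step~1, which requires sharpening the LP duality of \cite{davies2016extremes} to extract a linear penalty in $\delta_\circ$ uniformly on compact subintervals of $(0,1)$ (the endpoint $\lam = 1$ must be excluded as $Z^q_G(1) = q^n$ for every $G$); a secondary technical point is the base case of Step~3 with $m$ uniformly bounded, where $G'$ ranges over a finite list of cubic graphs and the inequality $c^q_l(G') \le c^q_l(H_{3,m})$ must be verified case-by-case.
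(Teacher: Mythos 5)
Your Step~1 (stability for the Potts model) is exactly the paper's Proposition~\ref{prop:color-stability}, and the observations in Step~3 about the first two moments matching (because they depend only on $|V|$ and $|E|$, hence agree for all cubic graphs on $m$ vertices) and about $\sum_l l^2\,[c^q_l(G')-c^q_l(H_{3,m})]\le 0$ following from the double zero of $Z^q_{H_{3,m}}-Z^q_{G'}$ at $\lam=1$ are correct and nice. The broad outline --- stability plus a local CLT plus a finer argument near the extremal graph --- is the right skeleton. But there are two genuine gaps.

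First, in Step~2 the local CLT constant $C(\eps,q)$ is not uniform over the full range of $k$. When $k=o(n)$ your tilting parameter $\lam^*$ tends to $0$, so the per-$K_{3,3}$ distribution degenerates (variance $\to 0$), and Gnedenko's theorem --- which assumes a \emph{fixed} step distribution with $n\to\infty$ --- gives no control. The paper isolates this regime as a separate case (\textbf{Small-1}, $k<\eps'n$) and handles it without any CLT, by showing $c^q_0(G')\le(1-\del)c^q_0(H_{3,n_1})$ and noting that the lower-order coefficients $c^q_{k'}(H)$ with $k'<k$ are negligible. Separately, by applying stability to $G$ rather than to the $K_{3,3}$-free part $G'$, you lose a factor $\sqrt n$ and so can only conclude when $n_1\gtrsim\log n$; the paper's \textbf{Large} case pairs the two halves of $H_{3,n}$ so that the $\sqrt{n_2}$ factors cancel and only $\sqrt{n_1}$ is lost, which lets the stability gap $e^{cn_1}$ win already for $n_1$ above a \emph{constant} $N_1$.

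Second, and more seriously, Step~3's Taylor-expansion-and-saddle-point argument is not established. Matching the constant and linear terms and getting the right sign on the quadratic term leaves all higher-order terms uncontrolled, and ``controllable since $m=O(\log n)$'' is exactly the statement that would need a proof: the relevant expansion has coefficients growing like $(\log(1/\lam^*))^j/j!$ against increments of size up to $3m/2=\Theta(\log n)$, which does not obviously converge. What is actually true (and is the paper's \textbf{Small-2}) is much blunter: by Lemma~\ref{lem:ColorlocalLim} the ratios $c^q_{k-r}(H)/c^q_k(H)$ are within a multiplicative $1\pm\del'$ of $\lam^{-r}$, so the whole convolution is, up to $(1\pm 2\del')$, a comparison of $Z^q_{G'}(\lam)$ with $Z^q_{H_{3,n_1}}(\lam)$ --- and the quantitative stability gap $Z^q_{G'}\le\frac{1-\del'}{1+2\del'}Z^q_{H_{3,n_1}}$ absorbs that $(1\pm2\del')$ slack. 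No moment matching is needed. Finally, your proposed ``base case'' requires verifying $c^q_l(G')\le c^q_l(H_{3,m})$ for \emph{all} $l$ on finitely many small cubic $G'$; this is a strictly stronger statement than the theorem itself (which only covers $l$ up to $(1-\eps)3m/(2q)$), is not known to be true, and is not used anywhere in the paper's argument.
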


The range of $k$ is essentially optimal by the following observation. 
Note that the expected number of monochromatic edges in a uniformly random assignment of $q$ colors to the vertices of a $d$-regular graph is $nd/2q$.  For an $n$-vertex graph $G$ the sum of all the coefficients $c_k$ is equal to $q^n$, and so no single graph can maximize all of the coefficients. 
We expect that a union of cliques maximizes most coefficients $c_k$ where $k \ge nd/2q$.  

Moreover, while a result similar to Theorem~\ref{thm:colorslices} is undoubtedly true for the case $q=2$, the exact statement is not true ($H_{3,n}$ has no $2$-colorings with exactly $1$ or $2$ monochromatic edges) and so instead of dealing with unenlightening technicalities we omit this case.

\paragraph{\bf Outline of the remainder of the paper}

\begin{itemize}
\item In Section~\ref{sec:probability} we define the related probabilistic models from statistical physics and present some of their probabilistic properties.
\item In Section~\ref{sec:stable} we define our notion of stability for partition functions of bounded degree graphs and prove Theorem~\ref{thm:stability} for matchings.
\item In Section~\ref{sec:transfer} we prove our first transference result, showing how stability theorems imply exact results for coefficients.  Here we prove Theorem~\ref{thm:exactslices} for matchings.
\item In Section~\ref{sec:hierarchy} we describe a hierarchy of different types of extremal results for partition functions and graph polynomials and present some open problems.
\item In Section~\ref{sec:appendix} we give the modifications of the proofs in Sections~\ref{sec:stable} and~\ref{sec:transfer} necessary to prove Theorems~\ref{thm:exactslices},~\ref{thm:stability}, and~\ref{thm:girthslices} for independent sets and Theorem~\ref{thm:colorslices} for colorings. 
\end{itemize}

\section{Probabilistic properties of partition functions}
\label{sec:probability}

\subsection{Occupancy fractions and observables}
Each of the partition functions defined above comes from a model in statistical physics.  The hard-core model on a graph $G$ at fugacity $\lam$ is a random independent set $\mathbf I$ chosen from $G$  with probability
\begin{align*}
\pr_{G,\lam}(\mathbf I =I) &= \frac{\lam^{|I|}}{\Zind_G(\lam)} \,.
\end{align*}
(We indicate the graph and value of the fugacity in the subscript of probabilities and expectations but drop it from the notation when they are clear from context). The partition function $\Zind_G(\lam)$ is the normalizing constant of the probability distribution -- it ensures that the sum of $\pr_{G,\lam}(\mathbf I=I)$ over all independent sets is $1$.  In statistical physics terminology, the scaling $\frac{1}{|V(G)|} \log \Zind_G(\lam)$ is the free energy (or free energy density) and is a convenient scaling to allow us to compare partition functions on graphs with different numbers of vertices.  In particular for any graph $G$ the free energy density of $G$ is the same as the free energy density of multiple copies of $G$.

Likewise the monomer-dimer model at fugacity $\lam$ is a random matching $\mathbf M$ chosen with probability 
\begin{align*}
\pr_{G,\lam}(\mathbf M = M) &= \frac{\lam^{|M|}}{\Zmatch_G(\lam)}\,,
\end{align*}
and the Potts model is a random vertex-coloring $\boldsymbol \chi$ (not necessarily proper) chosen with probability
\begin{align*}
\pr_{G,\lam}(\boldsymbol \chi = \chi) &= \frac{\lam^{m(G,\chi)}}{Z^q_G(\lam)}\,,
\end{align*}
where $m(G,\chi)$ is the number of monochromatic edges in $G$ with coloring $\chi$.  If  $\lam >1$, we say the model is ferromagnetic (monochromatic edges preferred); if $\lam \in [0,1)$ we say the model is anti-ferromagnetic (bichromatic edges preferred); and in the specific case $\lam =0$ (and $G$ has at least one proper $q$-coloring), then $\boldsymbol \chi$ is a uniformly chosen proper $q$-coloring of $G$. 

Associated to these models are  physical \textit{observables}: expectations of real-valued functions of the random configurations.  For instance, the occupancy fraction of the hard-core model is the expected fraction of vertices of the graph that appear in the random independent set:
\begin{align*}
\alpind_G(\lam) &= \frac{1}{|V(G)|} \E_{G,\lam} | \mathbf I| \,.
\end{align*}
Likewise, the occupancy fraction of the monomer-dimer model is the expected fraction of edges that appear in the random matching:
\begin{align*}
\alpmatch_G(\lam) &= \frac{1}{|E(G)|} \E_{G,\lam} | \mathbf M|\,,
\end{align*}
and the internal energy of the Potts model is the expected fraction of monochromatic edges:
\begin{align*}
\beta^q_G(\lam) &=  \frac{1}{|E(G)|} \E_{G,\lam} |m(G, \boldsymbol \chi)|\,.
\end{align*}

For our purposes, the importance of these particular observables is the fact that they can be expressed in terms of derivatives of the corresponding log partition functions:
\begin{align*}
\alpind_G(\lam) = \frac{1}{|V(G)|} \E_{G,\lam}|\mathbf I|  &= \frac{1}{|V(G)|} \frac{\sum _{I \in \mathcal I(G)} |I| \lam ^{|I|}   }{\Zind_G(\lam)  }  \\
&= \frac{1}{|V(G)|} \frac{\lam}{\Zind_G(\lam)} \frac{\partial }{\partial \lam}\Zind_G(\lam)  \\
&= \frac{\lam}{|V(G)|}\frac{\partial}{\partial \lam}\log\Zind_G(\lam) \,,
\end{align*}
and similarly,
\begin{align*}
\alpmatch_G(\lam) &= \frac{\lam}{|E(G)|}\frac{\partial}{\partial \lam}\log\Zmatch_G(\lam)\,,  \\
\beta^q_G(\lam) &=  \frac{\lam}{|E(G)|}\frac{\partial }{\partial \lam}\log Z^q_G(\lam) \, .
\end{align*}

The method of~\cite{Davies2015} gives the following, from which we derive Theorem~\ref{thm:matchPart} and an alternative proof of Theorem~\ref{thm:indPart}.

\begin{theorem}[Davies, Jenssen, Perkins, Roberts \cite{Davies2015}]
\label{thm:indOcc}
For any $d$-regular graph $G$, and any $\lam>0$,
\begin{align*}
\alpind_G(\lam) &\le \alpind_{K_{d,d}}(\lam) \,, 
\shortintertext{and}
\alpmatch_G(\lam) &\le \alpmatch_{K_{d,d}}(\lam)\,. 
\end{align*}
\end{theorem}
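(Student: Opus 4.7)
The plan is to apply the occupancy method based on local probabilistic identities combined with linear programming. For the hard-core bound, I would start by writing
\[
\alpind_G(\lam) = \frac{1}{|V(G)|} \sum_{v \in V(G)} \pr_{G,\lam}(v \in \mathbf I),
\]
and use the spatial Markov property of the hard-core measure: conditioning on $\mathbf I \cap (V(G)\setminus N[v])$, the distribution of $\mathbf I \cap N[v]$ is the hard-core measure on the subgraph of $N[v]$ induced by the vertices of $N[v]$ not blocked by $\mathbf I$ from outside. This expresses $\pr(v \in \mathbf I)$ as an explicit rational function in $\lam$ of the count $Y_v$ of \emph{externally uncovered} neighbors of $v$, i.e.\ neighbors $u \in N(v)$ all of whose other neighbors lie outside $\mathbf I$.

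The second step is to set up a finite-dimensional linear program whose variables are the joint distribution of $Y_v$ over a uniformly random $v \in V(G)$, and whose objective---a nonnegative linear combination of these probabilities, with coefficients depending only on $d$ and $\lam$---equals $\alpind_G(\lam)$ by the identity from the previous step. The key feasibility constraints come from an exchange/reciprocity argument: averaging over a uniform random edge rather than a uniform random vertex, one can rewrite certain marginals of $Y_v$ in two ways, forcing linear relations that every $d$-regular graph must satisfy. I would then exhibit an explicit dual certificate, tuned to $d$ and $\lam$, to bound this LP; and verify by direct calculation that the local distribution realised by $K_{d,d}$ saturates both the primal objective and the dual certificate, so that $K_{d,d}$ attains the optimum.

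For the monomer-dimer model the same programme is carried out with a uniformly random \emph{edge} $e$ in place of a random vertex. The local observable now tracks, for each of the two endpoints of $e$, the number of incident edges whose far endpoint is externally unmatched, and the spatial Markov property of the monomer-dimer model gives an analogous identity for $\alpmatch_G(\lam)$. The consistency constraints for the resulting LP again come from an exchange argument between $e$ and an incident edge, and an explicit dual certificate identifies $K_{d,d}$ as the optimiser.

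The main obstacle is not the formal LP machinery but the design choices: identifying a local observable $Y_v$ (or its edge analogue) rich enough that (i) the occupancy fraction is an \emph{exact} linear functional of its distribution, and (ii) the distribution realised by $K_{d,d}$ is genuinely extremal in the induced LP. The matching case is the more delicate of the two, because the two endpoints of the random edge must be tracked symmetrically and the dual certificate exhibiting tightness at $K_{d,d}$ depends in a nontrivial way on both $d$ and $\lam$; verifying tightness uniformly in $\lam>0$ is where the most careful calculation is needed.
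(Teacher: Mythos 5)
Your proposal is essentially the occupancy method of Davies--Jenssen--Perkins--Roberts, which is exactly the approach the paper cites for Theorem~\ref{thm:indOcc}: the spatial Markov property, a linear program over distributions of local configurations, a consistency constraint obtained by averaging over a random edge, and a dual certificate saturated by $K_{d,d}$. The one genuine imprecision is in the choice of local observable for the hard-core model. It must be the full induced subgraph $L$ on the externally uncovered neighbors of $v$ (the \emph{local view}), not merely the count $Y_v$ of uncovered neighbors. In a $d$-regular graph containing triangles, two local views with the same count can have different induced structure, and the spatial Markov property gives
\[
\pr_{G,\lam}\big(v\in\mathbf I\mid L\big)=\frac{\lam}{\lam+\Zind_L(\lam)}\,,
\]
which depends on $L$ through its independence polynomial, not only through $|V(L)|$. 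Consequently the occupancy fraction is an exact linear functional of the distribution of local views, but not of the distribution of $Y_v$, and an LP whose variables are only $\pr(Y_v=j)$ does not have a well-defined objective. The same caveat applies to the monomer-dimer model: the local view at a random edge $e$ must record the subgraph structure, in particular whether $e$ lies in a triangle so that two of its incident edges share a far endpoint, rather than only the pair of counts of externally unmatched incident edges at each endpoint. With this correction --- tracking full local views --- your outline (the LP relaxation, the exchange constraints, the explicit dual certificate depending on $d$ and $\lam$, and tightness at $K_{d,d}$) is precisely the argument of the cited paper.
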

The above result is a strengthening of Theorems~\ref{thm:matchPart} and~\ref{thm:indPart} for the partition function with the additional benefit of a natural probabilistic interpretation.  The analogous result for independent sets in cubic graphs of girth at least $5$ was proved in~\cite{perarnau2016counting}. 
We also know that in the anti-ferromagnetic Potts model on cubic graphs, that $K_{3,3}$ minimizes the internal energy, which in turn implies that $K_{3,3}$ maximizes the free energy.
\begin{theorem}[Davies, Jenssen, Perkins, Roberts \cite{davies2016extremes}]
\label{thm:colorOcc}
For any $3$-regular graph $G$, any $q\ge 2$, and any $\lam\in [0,1]$, 
\begin{align*}
\beta^q_G(\lam) & \ge \beta^q_{K_{3,3}}(\lam)\,,
\end{align*}
and so for every $\lam \in [0,1]$,
\begin{align*}
\frac{1}{|V(G)|} \log Z_G^q(\lam) &\le \frac{1}{6} \log Z^q_{K_{3,3}}(\lam)\,. 
\end{align*}
\end{theorem}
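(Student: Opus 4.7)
The plan is to prove the internal energy inequality $\beta^q_G(\lam) \ge \beta^q_{K_{3,3}}(\lam)$ by a local linear-programming argument (the occupancy method), then deduce the free energy bound by integrating in $\lam$. For the integration step, for any cubic graph $H$ we have
\[
\frac{d}{d\lam}\Bigl(\tfrac{1}{|V(H)|}\log Z^q_H(\lam)\Bigr) \;=\; \frac{|E(H)|}{|V(H)|\,\lam}\,\beta^q_H(\lam) \;=\; \frac{3}{2\lam}\beta^q_H(\lam),
\]
while at $\lam=1$ every coloring contributes weight $1$, so $\frac{1}{|V(G)|}\log Z^q_G(1) = \log q = \frac{1}{6}\log Z^q_{K_{3,3}}(1)$. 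Granting the internal energy bound on $[0,1]$, integrating the derivative inequality from $\lam$ up to $1$ immediately yields the stated free energy inequality.

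For the internal energy bound, I would pick a uniformly random vertex $v$ of $G$ and sample $\boldsymbol\chi$ from the Potts measure; by double counting edges, $3\beta^q_G(\lam) = \E_v\,\E[\deg_{\mathrm{mono}}(v)]$, where $\deg_{\mathrm{mono}}(v)$ is the number of edges at $v$ both of whose endpoints have the same color. The spatial Markov property says that, conditioned on the coloring of $V(G)\setminus\{v\}$, the distribution of $\chi(v)$ depends only on the neighbor colors $\mathbf a=(a_1,a_2,a_3)$, with
\[
\P(\chi(v)=c\mid\mathbf a) \;=\; \frac{\lam^{|\{i:\,a_i=c\}|}}{\sum_{c'}\lam^{|\{i:\,a_i=c'\}|}}.
\]
Writing $f_\lam(\mathbf a)=\sum_c\P(\chi(v)=c\mid\mathbf a)\cdot|\{i:a_i=c\}|$ for the conditional expectation of $\deg_{\mathrm{mono}}(v)$ given $\mathbf a$, and $\nu_G(\mathbf a)$ for the distribution of the unordered neighbor-color profile under the random vertex together with the Potts coupling, one obtains the identity $3\beta^q_G(\lam) = \sum_{\mathbf a}\nu_G(\mathbf a)\,f_\lam(\mathbf a)$.

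Next I would set up a linear program that relaxes the combinatorial problem: minimize $\sum_{\mathbf a}\nu(\mathbf a)\,f_\lam(\mathbf a)$ over probability distributions $\nu$ on neighbor-color profiles subject to the marginal-consistency identities
\[
\sum_{\mathbf a}\nu(\mathbf a)\,\bigl|\{i:\,a_i=c\}\bigr| \;=\; 3\sum_{\mathbf a}\nu(\mathbf a)\,\P(\chi(v)=c\mid\mathbf a), \qquad c\in[q].
\]
These express that the expected count of color-$c$ neighbors of a random vertex equals three times the probability that a random vertex has color $c$, and each is a linear equality valid for \emph{every} cubic graph because the graph is regular. Hence $\nu_G$ is LP-feasible, the LP minimum is a lower bound on $3\beta^q_G(\lam)$, and it therefore suffices to show that this minimum is attained by the profile $\nu_{K_{3,3}}$ induced by $K_{3,3}$.

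The main obstacle is solving this LP. In the antiferromagnetic regime $\lam\in[0,1]$ the minimum ought to correspond to the most bipartite-like profile, making $\nu_{K_{3,3}}$ a natural candidate. The concrete task is to construct explicit dual multipliers for the consistency constraints and for normalization so that the resulting affine lower bound for $f_\lam$ is tight exactly on the support of $\nu_{K_{3,3}}$; complementary slackness then certifies optimality. This mirrors the dual approach developed for matchings and independent sets in \cite{Davies2015}, but the analysis must be uniform in $q$ (the alphabet of $\mathbf a$ grows), and the boundary $\lam=0$ (uniform proper $q$-colorings, where $f_\lam$ degenerates) is best handled by a limiting argument from $\lam>0$.
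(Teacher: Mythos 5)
The theorem is cited from \cite{davies2016extremes} rather than proved in the present paper, so strictly speaking there is no ``paper's own proof'' to compare against; what follows compares your sketch against the occupancy-method proof of that paper as reflected in the stability machinery developed here.

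Your integration step is correct and matches the paper's own derivation inside the proof of Proposition~\ref{prop:color-stability}: from $\beta^q_G(\lam)=\frac{\lam}{|E(G)|}\frac{\partial}{\partial\lam}\log Z^q_G(\lam)$ one gets $\frac{d}{d\lam}\bigl(\frac{1}{|V(G)|}\log Z^q_G(\lam)\bigr)=\frac{3}{2\lam}\beta^q_G(\lam)$, and since $\frac{1}{|V(G)|}\log Z^q_G(1)=\log q$ for every $G$, integrating the pointwise internal-energy inequality over $[\lam,1]$ gives the free-energy bound. No issue there.

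The gap is in the LP you propose for the internal-energy bound. Your local observable records only the (unordered) color profile of the three neighbors of a random vertex $v$, and your constraints are the $q$ marginal-consistency identities saying the expected number of color-$c$ neighbors equals three times the probability a random vertex has color $c$. Those constraints are valid, and the identity $3\beta^q_G(\lam)=\sum_{\mathbf a}\nu_G(\mathbf a)f_\lam(\mathbf a)$ is correct, but this relaxation is too coarse to have $K_{3,3}$ as its minimizer. The occupancy-method LP actually used in \cite{davies2016extremes} --- and quoted in Lemma~\ref{lem:extracolconst} of this paper --- takes as its local view the induced graph $G[\overline N_2(v)]$ together with the Potts colors on the boundary $N_2(v)$. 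The depth-$2$ structure is essential, for two reasons. First, the spatial Markov property of the Potts model says that the conditional law of $\boldsymbol\chi$ on $\overline N_1(v)$ given the exterior is determined by the coloring of $N_2(v)$, not by the neighbor colors alone, so the boundary coloring is what parameterizes the locally computable conditional Gibbs measure. Second, and decisively, the fact that singles out $K_{3,3}$ among cubic graphs is a depth-$2$ structural fact --- every pair of neighbors of $v$ shares the same second neighborhood --- and your $q$ color-marginal constraints cannot see it: they are invariant under replacing $K_{3,3}$ by any other vertex-transitive cubic graph (e.g.\ the Petersen graph or $K_4$), so there is no reason the LP optimum should equal $3\beta^q_{K_{3,3}}(\lam)$ rather than something strictly smaller. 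Indeed, the proof of Lemma~\ref{lem:extracolconst} explicitly exploits that a vertex not in a $K_{3,3}$-component has two neighbors with distinct second neighborhoods, which is information your local view discards. So the LP you set up would have to be strengthened with constraints on depth-$2$ configurations before the dual-multiplier plan you outline (which you correctly identify as the hard step, and leave undone) could possibly close the argument.
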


\subsection{The canonical ensemble}
The random configurations $\mathbf I$, $\mathbf M$, and $\boldsymbol \chi$ defined above come from the \textit{grand canonical ensemble} in the language of statistical physics, and these models have several appealing mathematical properties, most notably the spatial Markov property.  
The individual coefficients of $\Zind_G(\lam)$, $\Zmatch_G(\lam)$, $Z_G^q(\lam)$, however, are associated directly to a different probabilistic model,  the \textit{canonical ensemble}.  In this setting $\mathbf I_k$ is a uniformly random independent set of size $k$ drawn from $G$; $\mathbf M_k$ is a uniformly random matching of size $k$; and $\boldsymbol \chi_k$ is a uniformly random coloring of $V(G)$ with $q$ colors that has exactly $k$ monochromatic edges.  The partition function of the canonical ensemble is simply $i_k(G)$, the number of independent sets of size $k$, or likewise $m_k(G)$ or $c_k^q(G)$. 

The two ensembles are closely related; if we condition on the event that $| \mathbf I|=k$ in the grand canonical ensemble then we obtain the distribution $\mathbf I_k$ (assuming of course that $G$ has at least one independent set of size $k$).  Moreover if we choose $\lam$ so that $\E_{\lam} | \mathbf I| = k$, then we would expect $\mathbf I$ and $\mathbf I_k$ to behave quite similarly.  In physics the two ensembles are considered to be essentially equivalent for most purposes; one way to understand this is by imagining running the canonical ensemble on a large lattice or otherwise periodic graph; if we then zoom in to a small block of the large graph, the induced distribution looks almost identical to the distribution of the grand canonical ensemble on the small block.  We use a precise instantiation of this idea in our proofs in Section~\ref{sec:transfer}.

Theorems~\ref{thm:matchPart} and~\ref{thm:indPart} can be viewed as tight upper bounds on partition functions from the grand canonical ensemble, whereas Conjectures~\ref{conj:IndGivenSize} and~\ref{conj:UMC} (and Theorem~\ref{thm:exactslices}) discuss tight upper bounds on the `partition function' of the canonical ensemble. 
The method we use to prove Theorem~\ref{thm:exactslices} builds upon the occupancy method of~\cite{Davies2015}, which relies heavily on the spatial Markov property of the grand canonical ensemble. 
The canonical ensemble does not have this property, and so these techniques do not apply directly. 
As the authors showed in~\cite{Davies2015} with a proof of an asymptotic version of Conjecture~\ref{conj:UMC}, the similarities between these ensembles mean that bounds relating to the grand canonical ensemble can give approximate versions of the results of this paper.
Here we go further, exploiting stability to obtain tight results.
In Section~\ref{sec:hierarchy} we consider general relations between extremal bounds for the two ensembles.

\section{Stability}
\label{sec:stable}

\subsection{Sampling distance}

There are many notions of distances between graphs.  We use the notion of \textit{sampling distance} which is closely related to the notion of Benjamini--Schramm convergence for sequences of bounded-degree graphs, see~\cite{benjamini2001recurrence,lovasz2012large}.
Let $G$ be a finite graph of maximum degree at most $d$.  Let $\rho_r(G)$ be the probability distribution on rooted graphs of depth at most $r$ induced by choosing a vertex $v$ uniformly at random from $V(G)$ and taking the depth $r$ ball around $v$. Then we can define the \textit{sampling distance} of two graphs $G$, $G'$ to be 
\begin{align*}
\delta_{\circ}(G, G^\prime) &= \sum_{r=1}^\infty 2^{-r} \| \rho_r(G) - \rho_r(G^\prime) \|_{TV}\,,
\end{align*}
where $\| \cdot \|_{TV}$ is the total variation distance of two probability distributions on the same ground set.  A sequence of graphs $G_n$ is Benjamini--Schramm convergent if it is a Cauchy sequence in the distance function $\delta_{\circ}(\cdot, \cdot)$. 

A simple fact about $\delta_{\circ}(\cdot, \cdot)$ is that if $H$ is any graph and $H^\prime $ consists of $k \ge 1$ copies of $H$, then $\delta_{\circ} (H, H^\prime) =0$ (in particular, $\delta_{\circ}(\cdot, \cdot)$ is not a metric).

\subsection{Stability results}\label{sec:deducestability}

Theorem~\ref{thm:stability} is a direct consequence  of the methods of~\cite{Davies2015} which show that $K_{d,d}$ is the unique connected graph which maximizes the occupancy fraction.
These methods have been used to give tight bounds on quantities analogous to the occupancy fraction in a variety of models and classes of graphs~\cite{cohen2016widom,cutler2016minimizing,davies2016extremes,davies2016average,perarnau2016counting}, reducing extremal graph theory problems to solving linear programs over the space of probability distributions on local configurations (for more detail see the aforementioned references and Section~\ref{sec:proofs-stability} below). 
In cases in which there is a unique extremal distribution arising from a unique connected graph,  a stability statement analogous to Theorem~\ref{thm:stability} follows from complementary slackness and basic calculus. 
If desired, an explicit form for the constants (e.g. $c\ind(d, \lam),\, c\match(d, \lam)$ below) can be computed via this method, but we omit such calculations. 

The statement for the occupancy fraction from which we begin exists implicitly in~\cite{Davies2015}, but here we give more detail. 

\begin{prop}\label{prop:occ-stability}
There exists continuous functions $c\ind(d, \lam),\, c\match(d, \lam)\ge 0$ which are strictly positive when $\lam>0$ such that for any $d$-regular graph $G$,
\begin{gather}
 \alpmatch_G(\lam) \le \alpmatch_{K_{d,d}}(\lam) - c\match(d, \lam) \cdot\delta_{\circ}(G,K_{d,d})\,,\\
 \alpind_G(\lam) \le \alpind_{K_{d,d}}(\lam) - c\ind(d, \lam) \cdot\delta_{\circ}(G,K_{d,d})\,.
\end{gather}
\end{prop}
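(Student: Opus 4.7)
The plan is to derive both inequalities from the linear-programming formulation underlying the occupancy method of~\cite{Davies2015}, together with a direct computation of $\delta_\circ(G, K_{d,d})$.

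First, I would recall the LP set-up (I describe the matching case; the independent-set case is parallel). For any $d$-regular graph $G$, the occupancy method attaches to $G$ a distribution $\pi_G$ on ``local-configuration types''---rooted radius-$R$ balls in $d$-regular graphs augmented with their monomer--dimer marginals---lying inside a compact, finite-dimensional polytope $\mathcal{P}_d$. There is a linear functional $L_\lam$ on $\mathcal{P}_d$ satisfying $\alpmatch_G(\lam) \le L_\lam(\pi_G)$, and Theorem~\ref{thm:indOcc} says $\sup_{\pi \in \mathcal{P}_d} L_\lam(\pi) = \alpmatch_{K_{d,d}}(\lam)$. The extremality proof in~\cite{Davies2015} further shows this supremum is attained \emph{uniquely} at $\pi^* := \pi_{K_{d,d}}$.

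Next, I would upgrade uniqueness to a linear gap via complementary slackness. Because $\mathcal{P}_d$ is a compact polytope with finitely many vertices and $\pi^*$ is the unique optimum, LP duality produces a dual solution whose reduced costs are strictly positive in every feasible direction leaving $\pi^*$. Taking $c_0(d, \lam)$ to be the minimum such reduced cost---continuous in $\lam$ by standard sensitivity analysis, and strictly positive for $\lam > 0$ since $\pi^*$ remains the unique optimum throughout $\lam > 0$---we obtain
\[
L_\lam(\pi) \le L_\lam(\pi^*) - c_0(d, \lam)\cdot \|\pi - \pi^*\|_{TV} \qquad \text{for every } \pi \in \mathcal{P}_d.
\]

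Finally, I would identify $\delta_\circ(G, K_{d,d})$ with $\|\pi_G - \pi^*\|_{TV}$ up to a factor depending only on $d$. Since $K_{d,d}$ is vertex-transitive with diameter $2$, $\rho_r(K_{d,d})$ is a point mass on the rooted graph $K_{d,d}$ for every $r \ge 2$, while $\rho_1(K_{d,d})$ coincides with $\rho_1(G)$ for every $d$-regular $G$. Letting $q$ be the fraction of vertices of $G$ not contained in a $K_{d,d}$-component, a direct calculation gives $\|\rho_r(G) - \rho_r(K_{d,d})\|_{TV} = q$ for all $r \ge 2$, so $\delta_\circ(G, K_{d,d}) = q/2$. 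The analogous accounting on the LP side gives $\|\pi_G - \pi^*\|_{TV} \ge c_1(d)\, q$ for a constant $c_1(d) > 0$. Combining the three steps yields the desired inequality with $c\match(d, \lam) := 2 c_0(d, \lam)\, c_1(d)$, and the argument for $c\ind$ is verbatim.

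The main obstacle is step two: extracting the linear gap as a \emph{continuous} positive function of $\lam$. Structurally this is a standard LP-sensitivity exercise---the LP coefficients are rational functions of $\lam$, so the dual optimum and its reduced costs are piecewise rational in $\lam$---but one must check that no reduced cost crosses zero on the open ray $\lam > 0$, which is ruled out by the uniqueness of $\pi^*$ at every positive $\lam$ together with the compactness of $\mathcal{P}_d$. Everything else (the per-vertex LP bound, the structure of $\mathcal{P}_d$, and the reduction of $\delta_\circ$ to a vertex-fraction) is directly supplied by or immediate from~\cite{Davies2015} and the observations about $K_{d,d}$ above.
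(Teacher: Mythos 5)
Your overall strategy---LP duality plus a quantitative link between $\delta_\circ(G,K_{d,d})$ and the distance of the local-view distribution from that of $K_{d,d}$---is the right shape, but two of your steps rely on claims that are not established and that the paper deliberately avoids.

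First, your step two assumes that $\pi^*=\pi_{K_{d,d}}$ is the \emph{unique} LP optimum over the whole polytope $\mathcal P_d$, and you lean on this uniqueness both for the strict-complementarity gap and for the positivity and continuity of $c_0(d,\lam)$. But the extremality argument in~\cite{Davies2015} does not prove uniqueness of the LP optimum; it only shows that the LP value equals $\alpmatch_{K_{d,d}}(\lam)$, and complementary slackness then tells you that any optimal $p$ is supported on the zero-slack set $\mathcal L_d^*$. There could in principle be several feasible distributions supported on $\mathcal L_d^*$ achieving the optimum, in which case your gap constant would be zero. The paper sidesteps this entirely: rather than upgrading uniqueness to a gap, it takes $\theta^*=\min\{s_L : L\notin\mathcal L_d^*\}>0$ (the least nonzero dual slack), adds the extra primal constraint ``$\pr(L\notin\mathcal L_d^*)\ge\delta f(d,\lam)$'' to the LP, and reads the stability bound off the augmented dual. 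That argument only needs the dual slacks to be strictly positive off $\mathcal L_d^*$, which is true by definition, not uniqueness of the primal optimum.

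Second, your step three---that $\|\pi_G-\pi^*\|_{TV}\ge c_1(d)\,q$ where $q$ is the fraction of vertices outside $K_{d,d}$-components---is the heart of the matter and is not ``analogous accounting'': it is exactly the content of the paper's Lemma~\ref{lem:extraconstraint}, which requires a genuine argument. The issue is that a vertex (or edge) lying outside every $K_{d,d}$-component could still, for a given realization of the Gibbs measure, produce a local view that lies in $\mathcal L_d^*$; one has to exhibit a specific local event (e.g.\ a triangle being exposed, or a particular distance-two edge being occupied while another is not) that occurs with probability bounded below and forces the local view out of $\mathcal L_d^*$. Carrying this out takes a case analysis, and the resulting lower bound, $f(d,\lam)=\min\{(1+\lam)^{-(d-2)},\lam(1+\lam)^{-2d^2}\}$, depends on $\lam$ as well as $d$---so your claimed $c_1(d)$ independent of $\lam$ cannot be right. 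Your first and third observations (the LP framework, $\rho_1(G)=\rho_1(K_{d,d})$ for all $d$-regular $G$, and $\rho_r(K_{d,d})$ being a point mass for $r\ge 2$) are fine, but without a proof of the probabilistic lower bound your step three asserts, the argument is incomplete.
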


We prove Proposition~\ref{prop:occ-stability} for matchings in Section~\ref{sec:proofs-stability} and give the give the required adaptation to prove the statement for independent sets in Section~\ref{sec:appendix}. 
We give the short derivation of Theorem~\ref{thm:stability} from Proposition~\ref{prop:occ-stability} here.

\begin{proof}[Proof of Theorem~\ref{thm:stability}]
Using Proposition~\ref{prop:occ-stability} and simple calculus, we have
\begin{align*}
\frac{1}{|V(G)|} \log \Zmatch_G(\lam) &=  \frac{1}{|V(G)|}\log \Zmatch_G(0) +  \frac{1}{|V(G)|}\int_{0}^\lam \big(\log \Zmatch_G(t)\big)' \, dt \\
 &=   \frac{d}{2}\cdot\int_{0}^\lam \frac{\alpmatch_G(t)}{t} \, dt \\
 &\le \frac{d}{2}\cdot\int_{0}^\lam \left(\frac{\alpmatch_{K_{d,d}}(t)}{t} - \frac{c\match(d,t) \cdot\delta_{\circ}(G,K_{d,d})}{t}\right)\, dt \\
 &= \frac{1}{2d}\log \Zmatch_{K_{d,d}}(\lam) - \delta_{\circ}(G,K_{d,d})\cdot \frac{d}{2}\int_{0}^\lam \frac{c\match(d,t)}{t}\, dt\,.
\end{align*}
Hence we may take $s\match(d,\lam) = \frac{d}{2}\int_{0}^\lam \frac{c\match(d,t)}{t}\, dt$, which proves the theorem since $c\match(d,t)>0$ for all $t>0$. 
The proof for independent sets is essentially the same.
\end{proof}

As a corollary of Theorem~\ref{thm:stability}, we obtain the following stability version of Bregman's theorem on perfect matchings applied to regular graphs.
\begin{cor}
\label{cor:bregmanStable}
There exists a constant $c(d)$ so that for any $d$-regular $G$,
\[ \frac{1}{|V(G)|} \log  \mperf(G) \le \frac{1}{2d}  \log \mperf(K_{d,d}) - c(d)\cdot\delta_{\circ}(G,K_{d,d})\,.\]
\end{cor}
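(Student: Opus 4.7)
The plan is to deduce the corollary from Theorem~\ref{thm:stability} by sending $\lambda\to\infty$ so that the leading coefficient of the matching polynomial dominates. For any $d$-regular $G$ on $n$ vertices with $2d\mid n$, every term of $\Zmatch_G$ is nonnegative, so $\mperf(G)\,\lambda^{n/2} \le \Zmatch_G(\lambda)$ for all $\lambda>0$. Taking logs, dividing by $n$, and applying Theorem~\ref{thm:stability} gives, for every $\lambda>0$,
\[
\frac{1}{n}\log \mperf(G) \;\le\; \frac{1}{2d}\log\!\left(\frac{\Zmatch_{K_{d,d}}(\lambda)}{\lambda^d}\right) - s\match(d,\lambda)\,\delta_{\circ}(G,K_{d,d}),
\]
which we are free to optimize.

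The crucial observation is that
\[
\frac{\Zmatch_{K_{d,d}}(\lambda)}{\lambda^d} \;=\; \sum_{k=0}^{d} m_k(K_{d,d})\,\lambda^{k-d} \;\searrow\; d! \;=\; \mperf(K_{d,d})
\]
as $\lambda\to\infty$, with the convergence monotone from above since all coefficients are nonnegative. Hence for any $\eta>0$ there exists $\lambda^\ast=\lambda^\ast(d,\eta)$ such that $\frac{1}{2d}\log\bigl(\Zmatch_{K_{d,d}}(\lambda)/\lambda^d\bigr) \le \frac{1}{2d}\log \mperf(K_{d,d}) + \eta$ for all $\lambda\ge\lambda^\ast$. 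At the same time, Theorem~\ref{thm:stability} ensures that $s\match(d,\lambda)$ is strictly increasing and strictly positive for $\lambda>0$, so $s\match(d,\lambda)\ge s\match(d,1)>0$ whenever $\lambda\ge 1$.

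I would then set $c(d):=\tfrac{1}{2}\,s\match(d,1)$. Given $G$ with $\delta:=\delta_{\circ}(G,K_{d,d})$, the case $\delta=0$ is immediate (then $G$ is a disjoint union of copies of $K_{d,d}$, so equality holds). For $\delta>0$, evaluating the displayed inequality at any $\lambda\ge\max\bigl(1,\lambda^\ast(d,c(d)\delta)\bigr)$ makes the error term (with $\eta=c(d)\delta$) dominated by $s\match(d,\lambda)\,\delta \ge 2c(d)\,\delta$, leaving a net stability gap of exactly $c(d)\,\delta$ as required.

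The only subtle point is that $\lambda$ must be chosen as a function of $\delta$, because the rate at which $\Zmatch_{K_{d,d}}(\lambda)/\lambda^d\to d!$ is not uniform in $\delta$; this is harmless since the corollary only asserts existence of a single constant $c(d)$ and no uniformity in $\lambda$ is needed. The main substantive inputs — strict positivity and monotonicity of $s\match(d,\lambda)$ in $\lambda$ — are already supplied by Theorem~\ref{thm:stability}, so no further work is required.
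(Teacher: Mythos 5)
Your proof is correct and rests on the same basic idea as the paper — letting $\lambda\to\infty$ in Theorem~\ref{thm:stability} so that the top coefficient of the matching polynomial dominates — but the two arguments handle the limiting step differently, and yours is arguably cleaner. The paper sets $c(d)=\lim_{\lambda\to\infty}s\match(d,\lambda)$, which requires establishing that this limit exists; since $s\match(d,\lambda)$ is increasing, the paper's proof reduces to showing it is bounded, and it proves boundedness indirectly by observing that otherwise the stability bound would force $\mperf(G)=0$ for every $G$ with $\delta_\circ(G,K_{d,d})>0$, a contradiction. You sidestep this entirely: by fixing $c(d)=\tfrac12 s\match(d,1)$ (a manifestly finite and positive quantity) and letting $\lambda=\lambda(\delta)$ grow with the instance, you win the $\eta$ versus $s\match(d,\lambda)\delta$ trade-off without ever needing $s\match$ to have a finite limit. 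This makes your version slightly more robust, at the small cost of a constant that is likely weaker than the paper's. Two cosmetic remarks: the restriction $2d\mid n$ in your opening sentence is unnecessary and never used (if $n$ is odd then $\mperf(G)=0$ and the claim is vacuous; for even $n$ the bound $\mperf(G)\lambda^{n/2}\le\Zmatch_G(\lambda)$ holds regardless of divisibility by $2d$); and you should note explicitly that if $\mperf(G)=0$ the inequality is trivial, so you may assume a perfect matching exists before taking logarithms.
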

(An analogue holds for independent sets with an elementary proof). 
\begin{proof}
We can write $\mperf(G)$ in terms of the monomer-dimer partition function:
\begin{align*}
\frac{1}{|V(G)|} \log  \mperf(G) &= \lim_{\lam \to \infty} \frac{1}{|V(G)|} \log \Zmatch_G(\lam) - \frac{\log \lam}{2}\,.
\end{align*}
(If $G$ has no perfect matching then the right-hand side tends to $- \infty$). Now applying Theorem~\ref{thm:stability},
\begin{align*}
\frac{1}{|V(G)|} \log  \mperf(G) &= \lim_{\lam \to \infty} \frac{1}{|V(G)|} \log \Zmatch_G(\lam) - \frac{\log \lam}{2} \\
&\le \lim_{\lam \to \infty} \frac{1}{2d} \log \Zmatch_{K_{d,d}}(\lam) - \frac{\log \lam}{2} -  s\match(d,\lam) \cdot\delta_{\circ}(G,K_{d,d})\\
&= \frac{1}{2d}  \log \mperf(K_{d,d}) - c(d)\cdot\delta_{\circ}(G,K_{d,d})\,,
\end{align*}
where we take $c(d) = \lim_{\lam \to \infty} s\match(d,\lam)$. The limit exists because $s\match(d,\lam)$ is strictly increasing in $\lam$ and is bounded (for if were not bounded, the above inequality for $\lam$ large would contradict the existence of a perfect matching in a graph $G$ with $\delta_{\circ}(G,K_{d,d})>0$).  
\end{proof}

\subsection{Proof of Proposition~\ref{prop:occ-stability}}\label{sec:proofs-stability}

We give the argument in detail for matchings; the case of independent sets and an analogue for colorings is discussed in Section~\ref{sec:appendix}. 
We begin with a brief summary of the proof from~\cite{Davies2015} of Theorem~\ref{thm:indOcc}.

To prove Theorem~\ref{thm:indOcc} for matchings, let $G$ be any $n$-vertex $d$-regular graph and consider the following experiment. 
We write $N(e)$ for the set of edges incident to an edge $e$ in $G$.  Sample a matching $\mathbf M$ from the monomer-dimer model at fugacity $\lam$ on $G$, and sample an edge $e$ uniformly at random from $E(G)$. 
Then record the \emph{local view} $L$ which consists of the subgraph of $G$ given by the edge $e$ and any incident edges which are \emph{externally uncovered} in the sense that they are not incident to any edge in $\mathbf M\setminus\big(\{e\}\cup N(e)\big)$. 
We write $\mathcal L_d$ for the set of possible local views in $d$-regular graphs.
A graph $G$ and fugacity parameter $\lam$ induce a distribution $p$ on $\mathcal L_d$  such that each $p(L)$ is the probability that the two-part experiment above yields the local view $L$.   

The occupancy fraction of $G$ can be computed from its corresponding distribution $p$. In~\cite{Davies2015} we derived a series of linear constraints on the values $p(L), L \in \mathcal L_d$, that hold for any $d$-regular $G$.  We then relaxed the maximization problem from distributions arising from graphs to all probability distributions on $\mathcal L_d$ satisfying these constraints and showed that the linear programming relaxation is tight: that is, the optimum of the linear program, $\alpha^*$, equals $\alpmatch_{K_{d,d}}(\lam)$. 

This linear program can be expressed in  standard form as 
\begin{equation}\label{eq:occ-primal}
\begin{aligned}
\alpha^* = \max \, a^T p \qquad \text{ subject to } Ap \le b \text{ and } p \ge 0\,,
\end{aligned}
\end{equation}
where $A$ is a matrix with rows indexed by constraints and columns indexed by local views, $a$ is a vector indexed by local views, and $b$ is a vector indexed by constraints. 
We will use the fact that $A$, $a$, and $b$ are independent of $n$. 

The distribution $p^*$ corresponding to $K_{d,d}$ is a feasible solution for this linear program, and so $\alpha^* \ge a^T p^* = \alpha_{K_{d,d}}\match(\lam)$.  To show that $\alpha^* = \alpha_{K_{d,d}}\match(\lam)$ we use linear programming duality.

The dual program (with dual variables given by a vector $q$) is 
\begin{equation}\label{eq:occ-dual}
\alpha^* = \min \, b^T q \qquad \text{ subject to $A^Tq \ge a$ and $q \ge 0$}\,,
\end{equation}
where each dual constraint (given by a row of $A^T$) corresponds to a local view $L$. %
If we can find a feasible primal solution $q^*$ so that $b^T q^* =  \alpha_{K_{d,d}}\match(\lam)$ then we have shown that $\alpha^* \le b^T q^*= \alpha_{K_{d,d}}\match(\lam)$, from which Theorem~\ref{thm:indOcc} follows. In~\cite{Davies2015} we computed $q^*$ by solving the dual constraints corresponding to local views in the support of $p^*$ to hold with equality.  We then define the \emph{slack} as a vector $s\in\mathbb R^{\mathcal L_d}$ such that
\begin{align}\label{eq:dualconstraint}
s_L = A^Tq^* - a_L\,.
\end{align}
We define $\mathcal L_d^*$ as the set of local views $L$ so that $s_L=0$.  Complementary slackness tells us that any primal solution $p'$ that achieves the optimal value $\alpha^*$ must have support contained in $\mathcal L_d^*$.   To prove Proposition~\ref{prop:occ-stability} we show quantitatively that the only graphs with distributions supported in $\mathcal L_d^*$ are $K_{d,d}$ (and $H_{d,n}$).

\begin{lemma}\label{lem:extraconstraint}
Let $G$ be a $d$-regular graph, $\lam>0$, and suppose that $L$ is a random local view obtained from the monomer-dimer model $G$ at fugacity $\lam$ as described above.
Then there exists a function $f(d,\lam)>0$ such that
\begin{align}
\Pr \big(  L \notin \mathcal L_d^*  \big)  \geq \delta_\circ(G,K_{d,d})\cdot f(d,\lam)\,.
\end{align}
\end{lemma}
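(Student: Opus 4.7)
The plan is to reduce the sampling distance $\delta_\circ(G, K_{d,d})$ to a single combinatorial quantity (the fraction of vertices of $G$ not lying in a $K_{d,d}$ component), and then to show that each such defective vertex contributes a uniformly positive amount to $\Pr(L \notin \mathcal L_d^*)$.

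First I would simplify $\delta_\circ(G, K_{d,d})$. Because $K_{d,d}$ has diameter $2$ and is vertex-transitive, the rooted depth-$r$ ball at a vertex $v \in V(G)$ is isomorphic to that of a $K_{d,d}$ vertex for some (equivalently every) $r \ge 2$ if and only if the component of $v$ is a copy of $K_{d,d}$. Let $\alpha(G)$ denote the fraction of vertices of $G$ not in a $K_{d,d}$ component. The depth-$1$ ball in any $d$-regular graph is a $d$-star, so the $r=1$ term vanishes, while $\|\rho_r(G) - \rho_r(K_{d,d})\|_{TV} = \alpha(G)$ for every $r \ge 2$. Summing the geometric series yields
\begin{align*}
\delta_\circ(G, K_{d,d}) = \frac{\alpha(G)}{2},
\end{align*}
reducing the lemma to proving $\Pr(L \notin \mathcal L_d^*) \ge \alpha(G) \cdot h(d,\lam)$ for some $h(d,\lam) > 0$.

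Now fix a vertex $v$ not in a $K_{d,d}$ component. Its depth-$2$ rooted ball is not isomorphic to that of $K_{d,d}$, so there is a combinatorial defect within distance two of $v$: a triangle through $v$, an unexpected common neighbour of a pair of vertices in $N(v)$, or a configuration breaking the complete bipartite pattern. For each defect type I would produce an edge $e$ incident to $v$ and a finite matching configuration on edges within distance two of $e$ such that the resulting local view $L$ has either a subgraph structure or an externally uncovered pattern that cannot arise in $K_{d,d}$, hence lies outside $\mathcal L_d^*$. Because these configurations involve only boundedly many edges and depend only on $d$, the spatial Markov property of the monomer-dimer model gives that the corresponding event occurs with probability at least $g(d,\lam) > 0$ uniformly in $G$ and in the matching on the exterior. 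Since a uniformly random edge of $G$ lies in a non-$K_{d,d}$ component with probability exactly $\alpha(G)$, averaging gives
\begin{align*}
\Pr(L \notin \mathcal L_d^*) \;\ge\; \frac{\alpha(G) \cdot g(d,\lam)}{d} \;=\; \frac{2\, g(d,\lam)}{d} \cdot \delta_\circ(G, K_{d,d}),
\end{align*}
so one may take $f(d,\lam) = 2g(d,\lam)/d$, which is continuous and strictly positive for $\lam > 0$ by construction.

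The principal obstacle is verifying that the local views constructed above really satisfy $s_L > 0$, and not merely lie outside the support of $p^*$: complementary slackness only yields $\mathrm{supp}(p^*) \subseteq \mathcal L_d^*$, so I must confirm that the explicit dual optimum $q^*$ of~\cite{Davies2015} has strict slack on every local view not arising from $K_{d,d}$. This is a uniqueness statement for $q^*$ that one extracts by inspecting the form of the dual certificate in~\cite{Davies2015}, and the $\lam$-dependence of the slack values then yields the required continuity and positivity of $f(d,\lam)$.
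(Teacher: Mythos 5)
The overall strategy is the same as the paper's: reduce $\delta_\circ(G,K_{d,d})$ to the fraction $\alpha(G)$ of vertices not in a $K_{d,d}$ component, then show each defective edge produces a local view outside $\mathcal L_d^*$ with probability bounded below in $d$ and $\lam$. However, there are two issues worth flagging.

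First, your exact identity $\delta_\circ(G,K_{d,d}) = \alpha(G)/2$ is not correct. The $r=1$ term does not vanish in general: the depth-$1$ rooted ball in a $d$-regular graph is the induced subgraph on $v$ and its neighbors, which contains triangle edges when present, and so need not be a star. What is true (and is all you need) is that for every $r\ge 1$ the variation distance is at most $\alpha(G)$, with equality for $r\ge 2$, giving $\alpha(G)/2 \le \delta_\circ(G,K_{d,d}) \le \alpha(G)$. The inequality $\alpha(G)\ge\delta_\circ(G,K_{d,d})$ is the only direction the argument actually uses, so the conclusion stands, but as written the statement is wrong.

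Second, and more substantially, the heart of the lemma is the construction of an explicit event at each defective edge whose probability is bounded below and which forces the local view outside $\mathcal L_d^*$. You describe what this should look like (``for each defect type I would produce an edge $e$ and a finite matching configuration \dots'') but do not carry it out, and the case analysis is where the real work is. The paper splits into two cases for a defective edge $e=u_1u_2$: either $e$ lies in a triangle $u_1u_2u_3$ (and the event is that the $d-2$ external edges at $u_3$ are unmatched, giving the triangle in the local view and probability at least $(1+\lam)^{-(d-2)}$), or $e$ lies in no triangle but there exist $u_3,u_4$ with $u_2u_3, u_3u_4 \in E(G)$ and $u_1u_4 \notin E(G)$ (and the event is that $g=u_3u_4$ is the unique matched edge at distance two from $e$, which forces the local view at $e$ to have unequal numbers of present edges on each side of $e$, with probability at least $\lam(1+\lam)^{-2d^2}$). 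Until you supply constructions of this kind and justify the probability lower bounds via the monomer-dimer independence structure (the paper uses the bound $\Pr(e\in\mathbf M)\le \lam/(1+\lam)$ repeatedly), you have not proved the lemma. On the other hand, the concern you raise at the end about whether $\mathcal L_d^*$ (zero slack) really coincides with the set of $K_{d,d}$-local-views is legitimate and is precisely the point the paper leans on implicitly; indeed the paper notes that for the Heawood case these two sets genuinely differ, and so in general one does need to inspect the dual certificate from~\cite{Davies2015} to confirm strict positivity of slack outside the $K_{d,d}$ local views.
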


This lemma gives an additional constraint on the distribution of local views for graphs $G$ with $\delta_\circ(G,K_{d,d})\ge \delta$, namely that
\begin{equation}\label{eq:extraconstraint}
x^T p \ge \delta f(d,\lam)\,,
\end{equation}
where $x$ is the vector in $\{0,1\}^{\mathcal L_d}$ indicating membership of $\mathcal L_d\setminus \mathcal L_d^*$.

To obtain Proposition~\ref{prop:occ-stability} we augment the primal program~\eqref{eq:occ-primal} with the constraint \eqref{eq:extraconstraint} and consider the new dual, which has an additional variable we denote $\theta$,
\begin{equation}\label{eq:occ-augdual}
\alpha^*_{\mathrm{new}} = \min \, b^T q - \delta f\theta\qquad \text{ subject to $A^Tq - \theta x \ge a$, $q \ge 0$, and $\theta\ge 0$}\,.
\end{equation}
If $G$ is a $d$-regular graph with $\delta_\circ(G,K_{d,d})\ge \delta$, then $\alpmatch_G(\lam)$ is feasible in the augmented primal, hence any $q$ and $\theta$ which are feasible in the new dual yield an upper bound on $\alpmatch_G(\lam)$.
We claim that $q=q^*$ and $\theta=\theta^*=\min\{ s_L : L\in\mathcal L_d\setminus\mathcal L_d^*\}$ are dual-feasible, which is easy to verify from the constraints in~\eqref{eq:occ-augdual}. This follows from the facts that $x$ indicates membership of $L\in\mathcal L_d\setminus\mathcal L_d^*$ and that $\theta^*$ is chosen to minimize the slack $s_L$ over this set.
Then directly from~\eqref{eq:occ-augdual} we obtain Proposition~\ref{prop:occ-stability} for matchings with $c\match(d,\lam)=f(d,\lam)\theta^*>0$.
It remains to prove Lemma~\ref{lem:extraconstraint}, for which we need a simple fact that is used throughout the rest of the paper.  
The fact follows easily from the spatial Markov property, but for completeness we give a direct proof that avoids the technical description of this property.

\begin{fact*}\label{lem:alpbound}
Let $G$ be a graph and let $M$ be a random matching drawn from the monomer-dimer model on $G$ at fugacity $\lam$. Then for any edge $e$ of $G$
\[
\P_{G,\lam}(e\in M)\leq\frac{\lam}{1+\lam}\,.
\]
In particular, $\alpmatch_G(\lam)\leq\frac{\lam}{1+\lam}$.
\end{fact*}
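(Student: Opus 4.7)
The plan is to establish the edge-wise inequality $\P_{G,\lam}(e\in M)\le \lam/(1+\lam)$ by a direct injective argument, and then recover the occupancy fraction bound via linearity of expectation. For a fixed edge $e$, partition $\mathcal M(G)$ into matchings containing $e$ and matchings avoiding $e$, and set
\[
a = \sum_{M \ni e} \lam^{|M|}, \qquad b = \sum_{M \not\ni e} \lam^{|M|},
\]
so that $\P_{G,\lam}(e \in M) = a/(a+b)$.

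The core step is the map $M \mapsto M\setminus\{e\}$ from matchings containing $e$ into matchings not containing $e$. Removing an edge from a matching yields a matching, the size decreases by exactly one, and the image clearly omits $e$; injectivity is immediate since one recovers $M$ by adjoining $e$. Factoring out one power of $\lam$ to account for the size decrease gives
\[
a = \lam \sum_{M \ni e} \lam^{|M|-1} \le \lam b,
\]
and since $x \mapsto x/(x+b)$ is increasing in $x \ge 0$, evaluating at $x = \lam b$ yields $\P_{G,\lam}(e\in M) \le \lam/(1+\lam)$.

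For the occupancy fraction part, write $|M| = \sum_{e\in E(G)} \indicator{e\in M}$ and take expectations: linearity gives
\[
\alpmatch_G(\lam) = \frac{1}{|E(G)|}\sum_{e\in E(G)} \P_{G,\lam}(e\in M) \le \frac{\lam}{1+\lam}.
\]
I foresee no genuine obstacle here; the only point requiring a moment's attention is that the removal map indeed lands in the correct set, which is evident since deleting $e$ from a matching cannot put $e$ back in. The argument is purely combinatorial and sidesteps a formal invocation of the spatial Markov property, matching the stated aim.
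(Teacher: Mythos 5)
Your proof is correct and follows essentially the same route as the paper's: both rest on the injectivity of $M \mapsto M\setminus\{e\}$ from matchings containing $e$ into those avoiding it, differing only in whether one bounds the numerator above by $\lam b$ and uses monotonicity of $x\mapsto x/(x+b)$ (your version) or replaces the denominator's $\mathcal M_0$-sum with the smaller sum over the image of the injection (the paper's version). The concluding step for $\alpmatch_G(\lam)$ via linearity of expectation is the same.
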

\begin{proof}
Let $\mathcal M_1$ denote the set of matchings in $G$ that contain $e$ and let $\mathcal M_0$ denote the set of matchings that do not. Note that the function $f: \mathcal M_1\to \mathcal M_0$ given by $M\mapsto M-e$ is injective and so
\[
\P(e\in M)=\frac{\sum_{M\in \mathcal M_1}\lam^{|M|}}{\sum_{M\in \mathcal M_0}\lam^{|M|}+\sum_{M\in \mathcal M_1}\lam^{|M|}}\le\frac{\sum_{M\in \mathcal M_1}\lam^{|M|}}{\sum_{M\in f(\mathcal M_1)}\lam^{|M|}+\sum_{M\in \mathcal M_1}\lam^{|M|}}=\frac{\lam}{1+\lam}\,.\qedhere
\]
\end{proof}
We remark that the analogous statement for independent sets, $\pr_{G,\lam}(v\in I) \le \frac{\lam}{1+\lam}$ for any $v \in V(G)$, holds by the same proof.

\begin{proof}[Proof of Lemma~\ref{lem:extraconstraint}]
We refer to~\cite{lovasz2012large} for properties of the sampling distance $\delta_\circ(\cdot,\cdot)$. 
In particular, note that if $\delta_\circ(G,K_{d,d})\ge \delta$ then there exists a radius $r\ge2$ such that the variation distance between the distributions on depth-$r$ neighborhoods obtained by uniform random sampling in $G$ and $K_{d,d}$ is at least $\delta/2$. 
Since $r\ge 2$, the only such neighborhood which occurs in $K_{d,d}$ is $K_{d,d}$ itself, and we obtain that the probability of sampling a different neighborhood in $G$ is at least $\delta$. 
That is, the fraction of vertices of $G$ which are not in a copy of $K_{d,d}$ is at least $\delta_\circ(G,K_{d,d})$. 
Since $G$ is $d$-regular the same fact holds for edges.

Fix an arbitrary $e=u_1u_2 \in E(G')$ which is not in a copy of $K_{d,d}$ and consider the local view generated by choosing a random matching $\mathbf M$ from the monomer-dimer model on $G$ and selecting the edge $e$ as the central edge. We have two cases.

If $e$ is contained in a triangle $T=u_1u_2u_3$, then this triangle is present in the local view at $e$ if and only if the $d-2$ edges outside $T$ which are incident to $u_3$ are not in $\mathbf M$. 
Let $A_e$ be this event. 
In the monomer-dimer model on any graph, an edge is in the random matching with probability at most $\lam/(1+\lam)$, so by successive conditioning $\Pr(A_e) \ge (1+\lam)^{-(d-2)}$. 

If $e$ is not contained in a triangle then, because $e$ is not in a $K_{d,d}$, there must be a pair of distinct vertices $u_3$, $u_4$ in $G$ such that
\begin{enumerate}
\item $e=u_1u_2$, $f=u_2u_3$, and $g=u_3u_4$ are edges of $G$,
\item $u_1u_4$ is not an edge of $G$.
\end{enumerate}
In this case, let $A_e$ be the event that $g$ is the only edge of $G$ at distance $2$ from $e$ in the matching $\mathbf M$.
When no edge incident to $g$ and no edge incident to $e$ are in $\mathbf M$, we have $g\in \mathbf M$ with probability $\lam/(1+\lam)$. Since the number of edges incident to $g$ or at distance $\leq2$ from $e$ is $<2d^2$ we have $\Pr(A_e)\ge \lam(1+\lam)^{-2d^2}$.

The only local views which may arise in $K_{d,d}$ are triangle-free with equal numbers of edges on each side of the `central' edge. 
Then in each of the cases above, the event $A_e$ implies the local view centered on $e$ is not in $\mathcal L_d^*$; for the second case note that $A_e$ implies $f$ is not present but all other edges incident to $e$ are. 

Since $e$ was arbitrary, and at least a fraction $\delta_\circ(G,K_{d,d})$ of the edges of $G$ are not in a copy of $K_{d,d}$, with probability at least $\delta_\circ(G,K_{d,d})\min\big\{(1+\lam)^{-(d-2)}, \lam(1+\lam)^{-2d^2}\big\}$ the random local view $L$ from $G$ is not in $\mathcal L_d^*$.
\end{proof}

\section{From stability to coefficients}
\label{sec:transfer}

To transfer stability results of the form of Theorem~\ref{thm:stability} to sharp bounds on individual coefficients (Theorems~\ref{thm:exactslices}, \ref{thm:girthslices}, and \ref{thm:colorslices}) we use the   probabilistic properties of the relevant statistical physics models.  We use these properties to analyze the coefficients of the partition functions of the optimizing graphs, $H_{d,n}$ for Theorems~\ref{thm:stability} and \ref{thm:colorslices} and $\heawoodn$ for Theorem~\ref{thm:girthslices}.  In these cases the optimizing graph is the union of small isomorphic components.  When we run, say, the monomer-dimer model on e.g.\ $H_{d,n}$, because of the spatial Markov property of the model, the sizes of the intersections of the random matching with each component are i.i.d.\ random variables bounded between $0$ and $d$, and this allows us to use probability theory related to the sums of independent random variables variables.
Note that in the canonical model, the sizes of intersections of the random matching with each component are not independent.

The following is a Local Central Limit Theorem due to Gnedenko~\cite{gnedenko1948local} (see the particular statement below and exposition by Tao~\cite{taoBlog}).
\begin{theorem}[Gnedenko's Local Limit Theorem]
\label{thm:gdedenko}
Let $X_1, \dotsc, X_n$ be i.i.d.\ integer valued random variables with mean $\mu$, variance $\sigma^2$, and suppose that the support of $X_1$ includes two consecutive integers. Let $S_n = X_1 + \dots + X_n$.  Then
\begin{align*}
\pr(S_n =k) &=  \frac{1}{\sqrt{ 2 \pi n} \sigma} \exp \left[ -(k-n\mu)^2/2n \sigma^2   \right ] + o(n^{-1/2})\,,
\end{align*}
with the error term $o(n^{-1/2})$ uniform in $k$.
\end{theorem}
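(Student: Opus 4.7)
The plan is to prove this by the classical Fourier-analytic method for local limit theorems. Since $X_1$ is integer-valued, Fourier inversion on the circle gives
\begin{equation*}
\P(S_n = k) = \frac{1}{2\pi} \int_{-\pi}^{\pi} \phi(t)^n\, e^{-itk}\,dt,
\end{equation*}
where $\phi(t) = \E[e^{itX_1}]$, and the Gaussian on the right side of the theorem admits the analogous Fourier representation as an integral over all of $\R$. After the substitution $t\mapsto t/\sqrt n$ and multiplying through by $\sqrt n$, the stated estimate reduces to bounding
\begin{equation*}
\int \bigl| \phi(t/\sqrt{n})^n - e^{it\mu\sqrt n - \sigma^2 t^2/2} \bigr|\,dt
\end{equation*}
by $o(1)$, uniformly in $k$. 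Observe that $k$ enters only via a phase factor $e^{-itk/\sqrt n}$ of modulus one, so passing to the absolute value of the difference automatically yields the required uniformity in $k$.

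I would then split the integration domain $[-\pi\sqrt n,\pi\sqrt n]$ into three pieces. On the central region $\{|t|\le T\}$ (for fixed $T$), a third-order Taylor expansion $\log\phi(s) = is\mu - \sigma^2 s^2/2 + O(s^3)$ at $s=0$ yields $\phi(t/\sqrt n)^n \to e^{it\mu\sqrt n - \sigma^2 t^2/2}$ pointwise with an integrable dominant, so the contribution tends to $0$ by dominated convergence. On the annulus $\{T \le |t| \le \delta\sqrt n\}$ for small $\delta$, the quadratic term in the expansion dominates and one obtains $|\phi(t/\sqrt n)|^n \le e^{-ct^2}$, whose Gaussian tail is arbitrarily small for $T$ large.

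On the remaining range $\{\delta\sqrt n \le |t| \le \pi\sqrt n\}$, the hypothesis that the support of $X_1$ contains two consecutive integers is essential. This aperiodicity forces $|\phi(s)| < 1$ for every $s\in(-\pi,\pi)\setminus\{0\}$, since $|\phi(s_0)|=1$ with $s_0\neq 0$ would place the support of $X_1$ in a single coset of a proper sublattice of $\Z$, contradicting consecutive-integer support. Continuity and compactness upgrade this to $\sup_{\delta\le|s|\le\pi}|\phi(s)|\le 1-\eta$ for some $\eta>0$, so this range contributes $O(\sqrt n(1-\eta)^n)$ after undoing the normalization, which is $o(n^{-1/2})$.

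The main technical obstacle is controlling the intermediate range: one must choose the Taylor radius $\delta$ small enough that the cubic error is dominated by the leading quadratic in the exponent, yet independent of $n$ so that aperiodicity can take over on the complement. The Gaussian side of the comparison is handled identically: its Fourier transform is exactly $e^{it\mu\sqrt n - \sigma^2 t^2/2}$ after rescaling, and the tail outside $[-\pi\sqrt n,\pi\sqrt n]$ contributes $O(e^{-c n})$. Everything else is routine, and the uniformity in $k$ is built in by working with the absolute value of the integrand throughout.
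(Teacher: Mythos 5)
The paper does not prove Theorem~\ref{thm:gdedenko}; it cites Gnedenko~\cite{gnedenko1948local} and points to Tao's blog exposition for the proof. Your Fourier-analytic outline is exactly the standard argument (and essentially the one in that cited exposition): invert on the circle, rescale by $\sqrt n$, split into a central window, an intermediate annulus, and the remaining range $\delta\sqrt n \le |t| \le \pi\sqrt n$, and kill the last range using aperiodicity. The observation that $k$ enters only through a unimodular phase, so that taking absolute values inside the integral gives uniformity in $k$ for free, is also the right way to obtain the uniform $o(n^{-1/2})$.

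One step needs repair. You invoke a third-order Taylor expansion $\log\phi(s) = is\mu - \tfrac{1}{2}\sigma^2 s^2 + O(s^3)$, but the hypotheses supply only finite variance, so $\phi$ is merely $C^2$ near $0$ and the correct expansion is $\log\phi(s) = is\mu - \tfrac{1}{2}\sigma^2 s^2 + o(s^2)$. This still suffices, though the bookkeeping changes slightly: on the central region $|t|\le T$ one has $n\log\phi(t/\sqrt n) = it\mu\sqrt n - \tfrac{1}{2}\sigma^2 t^2 + n\cdot o(t^2/n) \to it\mu\sqrt n - \tfrac{1}{2}\sigma^2 t^2$ pointwise, with the trivial dominant $1$ on the compact window; and on the intermediate annulus the Gaussian decay $|\phi(s)|\le e^{-cs^2}$ for $|s|\le\delta$ follows from $1-|\phi(s)|^2 = \E\bigl[1-\cos\bigl(s(X-X')\bigr)\bigr]\ge c s^2$ (with $X'$ an independent copy), which uses only two moments, rather than from a cubic remainder estimate. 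With that replacement your outline is complete and matches the standard proof.
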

The condition on the support of $X_1$ rules out cases in which, say $X_1$ only takes even values and so the probability $S_n$ is odd is $0$.

We  apply Theorem~\ref{thm:gdedenko} to derive bounds on the relative size of nearby coefficients in the optimizing graphs. 
\begin{lemma}\label{lem:localLim}
For any $\del, \eps >0$, $d, n_1 >0$,  there exists $N$ large enough so that the following is true. Suppose $n> N$ and $\eps n \le k \le (1-\eps)n/2$.  Then there exists $\lam \ge \frac{2\eps}{d}$  so that for all $0 \le r \le n_1$,
\begin{align*}
(1-\del) m_{k-r}(H_{d,n})  & \le \lam^r \cdot  m_k(H_{d,n}) \le  (1+\del) m_{k-r}(H_{d,n}) \,.
\end{align*}
\end{lemma}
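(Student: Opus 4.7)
The plan is to interpret the ratio $\lam^r\, m_k(H_{d,n})/m_{k-r}(H_{d,n})$ as a ratio of probabilities in the monomer-dimer model on $H_{d,n}$, which factors into i.i.d.\ contributions from the $K_{d,d}$ components, and then apply Gnedenko's local central limit theorem. Write $\tilde n = n/(2d)$, so $H_{d,n}$ is the disjoint union of $\tilde n$ copies of $K_{d,d}$. Sample $\mathbf M$ from the monomer-dimer model on $H_{d,n}$ at fugacity $\lam$. By the product structure of the Gibbs measure on a disjoint union, the sizes $X_1,\dots,X_{\tilde n}$ of the restrictions of $\mathbf M$ to the components are i.i.d., each distributed as the matching size in the monomer-dimer model on $K_{d,d}$, and take values in $\{0,1,\dots,d\}$. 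Writing $S = \sum_i X_i = |\mathbf M|$, we have $\pr(S=j) = m_j(H_{d,n})\lam^j/\Zmatch_{H_{d,n}}(\lam)$, so
\[
\frac{\lam^r\, m_k(H_{d,n})}{m_{k-r}(H_{d,n})} = \frac{\pr(S=k)}{\pr(S=k-r)}\,,
\]
and it suffices to show this ratio is $1 + o(1)$ as $n\to\infty$, uniformly in the relevant parameters.

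Let $\mu(\lam) = \E X_1$ and $\sigma^2(\lam) = \var X_1$. The function $\mu\colon (0,\infty)\to(0,d)$ is a continuous strictly increasing bijection, so I choose $\lam = \lam(k,n)$ satisfying $\mu(\lam) = 2dk/n$, equivalently $\tilde n\,\mu(\lam) = k$. The assumption $\eps n\le k\le (1-\eps)n/2$ puts this target in $[2d\eps,\, d(1-\eps)]$, so $\lam$ lies in a compact interval $[\lam_-,\lam_+]$ depending only on $d$ and $\eps$. To obtain the claimed lower bound $\lam\ge 2\eps/d$, use the Fact in Section~\ref{sec:proofs-stability}: every edge of $K_{d,d}$ lies in $\mathbf M$ with probability at most $\lam/(1+\lam)$, and summing over the $d^2$ edges of $K_{d,d}$ gives $\mu(\lam) \le d^2\lam/(1+\lam)$. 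Combined with $\mu(\lam)\ge 2d\eps$ this rearranges to $\lam \ge 2\eps/(d-2\eps)\ge 2\eps/d$. Moreover $\sigma^2(\lam)$ is continuous and strictly positive on this compact interval (since $X_1$ is nondegenerate for $\lam>0$), hence bounded above and bounded away from $0$.

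The support of $X_1$ contains the consecutive integers $0$ and $1$, so Gnedenko's theorem (Theorem~\ref{thm:gdedenko}) applies to $S$. With the choice $\tilde n\,\mu(\lam)=k$ and $\sigma = \sigma(\lam)$, for any $r\in[0,n_1]$ we obtain
\[
\pr(S=k) = \frac{1 + o(1)}{\sqrt{2\pi\tilde n}\,\sigma}, \qquad \pr(S=k-r) = \frac{\exp\bigl(-r^2/(2\tilde n\sigma^2)\bigr) + o(1)}{\sqrt{2\pi\tilde n}\,\sigma}\,.
\]
Since $r\le n_1$ is bounded while $\sigma^2$ is bounded below, $r^2/(2\tilde n\sigma^2) = O(n_1^2/n) = o(1)$, so $\pr(S=k)/\pr(S=k-r)\to 1$. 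Taking $N$ large enough that this ratio lies in $[1-\del,1+\del]$ for all admissible $k$ and $r$ completes the proof. The main obstacle is that $\lam = \lam(k,n)$ varies with $k$ and $n$, so Gnedenko's $o(\tilde n^{-1/2})$ error term must be uniform over $\lam\in[\lam_-,\lam_+]$; this follows from compactness of the $\lam$-range together with the fact that the law of $X_1$ depends continuously on $\lam$ and is supported on the fixed finite set $\{0,\dots,d\}$, which makes the characteristic-function estimates in the standard proof of the LLT uniform.
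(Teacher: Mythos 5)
Your proof is correct and follows essentially the same route as the paper: choose $\lam$ so that the expected matching size in $H_{d,n}$ equals $k$, show $\lam$ is bounded above and below in terms of $d$ and $\eps$ only (using the same occupancy-fraction bound $\lam/(1+\lam)\ge\alpmatch_{K_{d,d}}(\lam)$), decompose $|\mathbf M|$ as a sum of $n/2d$ i.i.d.\ bounded integer random variables, apply Gnedenko's local CLT, and translate back. Your explicit note on uniformity of the $o(\cdot)$ error over the compact $\lam$-range is a point the paper handles more tersely, but the argument is the same.
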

\begin{proof}
Choose $\lam$ so that the expected size of the matching drawn from the monomer-dimer model on $H_{d,n}$ is $k$. By our choice of $\lam$, $\frac{\lam}{1+\lam}\geq\alpmatch_{H_{d,n}}(\lam)=\frac{2k}{nd}\geq\frac{2\eps}{d}$ and so $\lam\geq\frac{2\eps}{d}$.  Note that $\alpmatch_{H_{d,n}}(t)=\alpmatch_{K_{d,d}}(t)$ is strictly increasing in $t$ with $\lim_{t\to\infty}\alpmatch_{K_{d,d}}(t)=\frac{1}{d}$. Since $k \le (1-\eps)n/2$ (so that $\alpmatch_{K_{d,d}}(\lam)\le\frac{1-\eps}{d}$) it follows that $\lam$ must be bounded from above by a function of $d$ and $\eps$ (but not $n$).

Let $K= n/2d$ and let $X$ be the size of the random matching drawn from $H_{d,n}$ at fugacity $\lam$. Note that $X$ is the sum of $K$ i.i.d.\ random variables $X_1, \dotsc, X_K$, where $X_1$ is the size of a random matching chosen from $K_{d,d}$ according to the monomer-dimer model at fugacity $\lam$ and so takes the values $\{0, 1, \dots d\}$ with positive probability. Let $\sigma^2$ denote the variance of $X_1$ and note that $\sigma$ is independent of $n$. 
Now Theorem~\ref{thm:gdedenko} gives:
\begin{align*}
\pr(X=k-t) &= \frac{1}{\sqrt{2 \pi K} \sigma} e^{- t^2/2K\sigma^2  } + o(K^{-1/2})\,,
\end{align*}
where hidden constants in the error term depend only on $d$ and $\eps$ since $\lam$ is bounded above and below in terms of $d$ and $\eps$. 

 For $0 \le r \le n_1$ and $K$ large enough as a function of $n_1, \del$, we have 
 \begin{align*}
(1-\del) \pr[X=k-r] &\le \pr[X=k] \le (1+\del) \pr[X=k-r]\,. 
\end{align*}
Since $\pr(X=k-r) = \frac{ m_{k-r}(H_{d,n}) \lam^{k-r} }{\Zmatch_{H_{d,n}}(\lam)}$, we get 
\begin{align*}
(1-\del) m_{k-r}(H_{d,n}) \lam^{-r} & \le  m_k(H_{d,n}) \le  (1+\del) m_{k-r}(H_{d,n})  \lam^{-r}\,,
\end{align*}
and the result follows.
\end{proof}
The same result holds for independent sets in both $H_{d,n}$ and $\heawoodn$.  For colorings the result holds as well, and we state it here for completeness.  
\begin{lemma}\label{lem:ColorlocalLim}
For any $\del, \eps >0$, $d, q, n_1 >0$,  there exists $n_0$ large enough so that the following is true. Suppose $n> n_0$ and $\eps n < k < (1-\eps)dn/2q$.  Then there exists $\lam \le 1 -  g(q,d,\eps)$, so that for all $0 \le r \le n_1$,
\begin{align*}
(1-\del) c^q_{k-r}(H_{d,n})  & \le \lam^r \cdot  c^q_k(H_{d,n}) \le  (1+\del) c^q_{k-r}(H_{d,n}) \, ,
\end{align*}
where $g(q,d,\eps) > 0$.
\end{lemma}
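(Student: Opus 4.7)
The plan is to mirror the proof of Lemma~\ref{lem:localLim}, replacing the monomer-dimer model with the Potts model on $H_{d,n}$. First I would choose $\lambda \in (0,1)$ so that the expected number of monochromatic edges of a Potts-random coloring of $H_{d,n}$ at fugacity $\lambda$ equals $k$, i.e.\ solve $\beta^q_{K_{d,d}}(\lambda) = 2k/(nd)$. This equation has a unique solution in $(0,1)$: $\beta^q_{K_{d,d}}$ is continuous on $(0,1]$ with $\beta^q_{K_{d,d}}(1) = 1/q$ (uniform coloring), $\lim_{\lambda\to 0}\beta^q_{K_{d,d}}(\lambda) = 0$ for $q \ge \chi(K_{d,d}) = 2$, and strictly increasing since $\frac{d}{d\lambda}\beta^q_{K_{d,d}}(\lambda) = \mathrm{Var}(m)/\bigl(\lambda|E(K_{d,d})|\bigr) > 0$. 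From the hypothesis $\epsilon n < k < (1-\epsilon)dn/(2q)$ we obtain $2\epsilon/d < \beta^q_{K_{d,d}}(\lambda) < (1-\epsilon)/q$, and continuity forces $\lambda$ into a compact subinterval $[\lambda_0(q,d,\epsilon),\, 1 - g(q,d,\epsilon)] \subset (0,1)$.

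Next I would apply Gnedenko's Local Limit Theorem to $S_K = X_1 + \cdots + X_K$, where $K = n/(2d)$ and $X_i$ is the number of monochromatic edges in the $i$-th $K_{d,d}$ component of $H_{d,n}$ under the Potts distribution at fugacity $\lambda$. By the product structure of the partition function the $X_i$ are i.i.d.\ integer valued random variables with values in $\{0, 1, \dots, d^2\}$, mean $\mu = k/K$, and variance $\sigma^2 = \sigma^2(\lambda,d,q)$ which is continuous and strictly positive, hence bounded away from zero on the compact $\lambda$-interval above. To check the support condition, note that for $q \ge 3$ and $d \ge 2$ one exhibits colorings of $K_{d,d}$ yielding exactly $1$ and exactly $2$ monochromatic edges (colour one side monochromatically and alter one or two vertices of the other side), and both colorings have positive Potts weight for $\lambda > 0$; so the support contains consecutive integers. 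Theorem~\ref{thm:gdedenko} then gives
\[
\Pr(S_K = k - r) \;=\; \frac{1}{\sqrt{2\pi K}\,\sigma}\exp\!\bigl(-r^2/(2K\sigma^2)\bigr) + o(K^{-1/2}),
\]
with error uniform in $r \in [0,n_1]$ and depending only on $d,q,\epsilon$.

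Finally, since $\lambda$ is bounded above and below in terms of $d,q,\epsilon$, for $n$ large enough (as a function of $d,q,\epsilon,\delta,n_1$) the ratio $\Pr(S_K = k)/\Pr(S_K = k-r)$ lies in $[1 - \delta, 1 + \delta]$ for all $0 \le r \le n_1$. Writing $\Pr(S_K = j) = c^q_{j}(H_{d,n})\lambda^{j}/Z^q_{H_{d,n}}(\lambda)$, this ratio equals $\lambda^r\, c^q_k(H_{d,n})/c^q_{k-r}(H_{d,n})$, from which the claimed bounds follow. The main obstacle is the verification of the Gnedenko hypotheses uniformly in $n$, i.e.\ confining $\lambda$ to a compact subinterval of $(0,1)$ and bounding $\sigma^2$ away from zero there; both reduce to continuity arguments on $K_{d,d}$ using the assumption $q \ge 3$ and the range of $k$.
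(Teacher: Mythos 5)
Your approach is the same as the paper's: the paper explicitly declares the proof of Lemma~\ref{lem:ColorlocalLim} to be ``identical to that of Lemma~\ref{lem:localLim},'' with the only extra work being the verification that $k<(1-\eps)dn/2q$ forces $\lam\le 1-g(q,d,\eps)$, which you handle exactly as the paper does via $\beta^q_G(1)=1/q$ and monotonicity of $\beta^q_{K_{d,d}}$. Your extra detail---confining $\lam$ to a compact subinterval of $(0,1)$ so that the variance in Gnedenko's theorem is bounded away from zero, uniformly in $n$---is the right thing to check and matches the (implicit) content of the paper's argument.

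One slip: your parenthetical construction for the support condition does not work. If you ``colour one side monochromatically'' (say $A$ all colour $1$) and $B$ starts bichromatic to it (all colour $2$), then recolouring a single vertex of $B$ to colour $1$ creates $d$ monochromatic edges, not $1$; recolouring it to a fresh colour creates none; and similarly no single or double alteration yields exactly $1$ or exactly $2$ monochromatic edges for general $d$. The support fact is nonetheless true for $q\ge 3$: take $a_1=b_1=1$, $a_i=2$ for $i\ge 2$, and $b_j=3$ for $j\ge 2$, giving exactly one monochromatic edge; then additionally set $a_2=1$, giving exactly two. So the lemma's conclusion and the overall structure of your proof stand, but the exhibited colourings need to be replaced.
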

The proof is identical to that of Lemma~\ref{lem:localLim}, we just need the fact that for $k < (1-\eps)dn/2q$, $\lam \le 1- g(q,d,\eps)$ for some $g(q,d,\eps) > 0$.  This holds since $\beta^q_G(1) = 1/q$ for all $G$ and $\beta^q_{K_{d,d}}(\lam)$ is strictly increasing in $\lam$.

\subsection{Matchings of a given size}\label{sec:matchslices}
Here we prove Theorem~\ref{thm:exactslices} for matchings.

Divide $G$ into two parts, $G^\prime$ and $H$, where $H$ is a union of $K_{d,d}$'s and $G^\prime$ contains no copy of $K_{d,d}$.    Let $n_1= | G^\prime|$ and $n_2=n-n_1$. Let $H^\prime = H_{d,n_1}$.    

Given $\eps>0$ and $d$, we choose auxiliary parameters with the following dependences:
\begin{itemize}
\item $\del = \del(d)$.
\item $N_1 = N_1(d,\eps)$.
\item  $\eps ' = \eps'(N_1,\del,d)$.
\item $\del' = \del'(\eps, d)$.
\item $N = N(N_1, \eps', \del, \del', d  )$. 
\end{itemize}

We break our argument into cases depending on whether $G'$ is large or small. If $G'$ is small the intuition is that when uniformly choosing a matching of size $k$ from the canonical model on $G$, the distribution of the matching induced on $G'$ looks as though it has come from the grand canonical model with a particular $\lambda$. This will allow us to use Theorem~\ref{thm:stability}.  If $G'$ is large then we can use Theorem~\ref{thm:stability} directly in combination with the local limit theorem.
We break the case where $G'$ is small further depending on the size of $k$.
If $k$ is very large Lemma~\ref{lem:localLim} does not apply and so we use a slightly different approach there.

\begin{description} 
\item[Small-1] If $n_1$ is small ($n_1\le N_1$) and  $k$ is large (at least $(1- \eps^\prime)n/2$) we show that only the top coefficient of the matching polynomial of $G'$ matters, and that there is a constant factor gap between the top coefficient of $G^\prime $ and $H^\prime$ (regardless of how small $n_1$ is).
\item[Small-2]   If $n_1$ is small and $k$ is moderate, $\eps n < k < (1 - \eps^\prime)n/2$, we show that a random matching of size $k$ in $G$ approximately induces the monomer-dimer model on $H$ with an appropriate $\lam$, and thus we can simply compare the partition functions of $G^\prime$ and $H^\prime$.  This uses the local limit theorem.
\item[Large]  If $n_1$ is large ($n_1 > N_1$) then we use stability (Theorem~\ref{thm:stability}) and the local limit theorem (Theorem~\ref{thm:gdedenko}).
\end{description}

\begin{proof}[\textbf{Small-1:} $n_1 \leq N_1$ and $k \geq(1- \eps^\prime)n/2$]
\hfill

Let $\del =\del(d)>0$ be such that for any $G'$ (with $|G'|=n_1$ divisible by $2d$) containing no copies of $K_{d,d}$ we have 
\begin{equation}
\label{eq:matchTopgap}
 m_{n_1/2}(G^\prime) \le (1-\del) m_{n_1/2}(H_{d,n_1})\,.
 \end{equation}
Such a $\del$ exists by Corollary~\ref{cor:bregmanStable}.
We choose $\eps' =\eps'(\del,d,N_1)$ small enough and $N = N(\eps',N_1)$ large enough so that the following holds. For $n> N - N_1$, $k > (1-\eps^\prime)n/2 $,
\begin{equation}
\label{eqEpPrime}
 \frac{m_{k+1}(H_{d,n})}{m_k(H_{d,n})} < \del \cdot 2^{-d N_1/2}\,.
\end{equation} 
To see this is possible, note that given a matching $M$ of size $k+1$ in a graph, we can obtain $k+1$ matchings of size $k$ by removing one of the edges of $M$. On the other hand given a matching $M'$ of size $k$, there are at most $d(n-2k)/2$ edges which can be added to $M'$ to obtain a matching of size $k+1$. It follows that $\frac{m_{k+1}(H_{d,n})}{m_k(H_{d,n})} \leq \frac{d(n-2k)}{2(k+1)} < 2d \eps' $ and so choosing $\eps' = \frac{\delta \cdot 2^{-d N_1/2}}{2d}$ will suffice.
We then see that
\begin{align*}
m_k(G) &= \sum_{r=0}^{n_1/2} m_r(G^\prime) m_{k-r}(H) \\
&\le m_{n_1/2}(G^\prime) m_{k-n_1/2}(H) + 2^{dn_1/2} \max_{k^\prime > k-n_1/2} m_{k^\prime}(H)  \\
&\le (1-\del) m_{n_1/2}(H^\prime) m_{k-n_1/2}(H) + 2^{dn_1/2} \max_{k^\prime > k-n_1/2} m_{k^\prime}(H) \\
&\le (1-\del) m_{n_1/2}(H^\prime) m_{k-n_1/2}(H) + \del \cdot m_{k-n_1/2}(H) \\
&\le  m_{n_1/2}(H^\prime) m_{k-n_1/2}(H) \\
&\le m_k(H_{d,n})\,,
\end{align*}
where the second inequality follows from~\eqref{eq:matchTopgap} and the third from~\eqref{eqEpPrime}. 
\end{proof}

\begin{proof}[\textbf{Small-2:} $n_1 \leq N_1$ and $k \leq(1- \eps^\prime)n/2$]
\hfill

Choose $0<\del' = \del'(\eps,  d) <1/2$ small enough that for $\lam \ge \frac{2\eps}{d}$, and any $d$-regular $G^\prime $ not containing a copy of $K_{d,d}$, we have $\Zmatch_{G'}(\lam) \le \frac{1-\del'}{1+2\del'} \Zmatch_{H_{d,|G'|}}(\lam)$. Such a $\del'$ exists by Theorem~\ref{thm:stability}.  Choose $N = N(N_1, \eps, \eps')$ large enough that by Lemma~\ref{lem:localLim}, for $n\ge N$, $\eps n< k <(1-\eps' )n/2$, $n_1 \le N_1$ and $0\le r \le n_1$ we have
\begin{equation}
\label{eqsmall2bound}
(1-\del') m_{k-r}(H_{d,n_2})   \le \lam^r \cdot  m_k(H_{d,n_2}) \le  (1+\del') m_{k-r}(H_{d,n_2}) 
\end{equation}
for some $\lam \ge \frac{2\eps}{d}$.  Now we bound
\begin{align*}
m_k(G) &= \sum_{r=0}^{n_1/2}m_r(G')m_{k-r}(H)\\
&\leq (1+2\del')  \sum_{r=0}^{n_1/2}m_r(G')\lam^r m_k(H)\quad \text{by \eqref{eqsmall2bound}} \\
&= (1+2\del') m_k(H)\Zmatch_{G'}(\lam) \\
&\le (1-\del') m_k(H)\Zmatch_{H'}(\lam) \\
&= (1 - \del') \sum_{r=0}^{n_1/2}m_r(H')\lam^r m_k(H) \\
&\le \sum_{r=0}^{n_1/2}m_r(H') m_{k-r}(H)\quad \text{by \eqref{eqsmall2bound}} \\
&= m_k(H_{d,n})\,.\qedhere
\end{align*}
\end{proof}

\begin{proof}[\textbf{Large:} $n_1>N_1$]
\hfill

Choose $\lam$ so that the expected size of the random matching in $H_{d,n}$ at fugacity $\lam$ is $k$, and choose $s$ so that
\[
 m_s(G') m_{k-s}(H) = \max_{0\le r \le n_1/2} m_r(G') m_{k-r}(H)\, .
\]

In other words, $s$ is the most likely size of matching induced on $G'$ when choosing a random matching from $G$ of size exactly $k$ (note that the choice of $s$ may not be unique).  
We write $\pr_{G,\lam}[|\mathbf M|= t]$ to mean the probability that a random matching from the monomer-dimer model on $G$ at fugacity $\lam$ has $t$ edges. 
Then we have 
\begin{align*}
m_k(G) &\le  \left(\frac{n_1}{2}+1\right)  \cdot m_s(G') m_{k-s}(H) \\
&=  \left(\frac{n_1}{2}+1\right) \cdot \frac {\Zmatch_{G'}(\lam)}{\lam^s} \pr_{G',\lam} [| \mathbf M | = s] \cdot \frac {\Zmatch_{H_{d,n_2}}(\lam)}{\lam^{k-s}} \pr_{H_{d,n_2},\lam} [|\mathbf M|= k-s] \, ,
\end{align*}
and
\begin{align*}
m_k(H_{d,n}) &\ge  m_{\lfloor kn_1/n \rfloor}(H') m_{\lceil kn_2/n \rceil}(H)  \\
&=   \frac{\Zmatch_{H_{d,n_1}}(\lam)}{\lam^{\lfloor kn_1/n \rfloor}} \pr_{H_{d,n_1},\lam} [|\mathbf M| = \lfloor kn_1/n \rfloor] \cdot \frac{\Zmatch_{H_{d,n_2}}(\lam)}{\lam^{\lceil kn_2/n \rceil}} \pr_{H_{d,n_2},\lam} [| \mathbf M| = \lceil kn_2/n \rceil] \, .
\end{align*}
Cancelling terms, it is enough to show that 
\begin{equation}
\label{eqlargeMatchZ}
\frac{\Zmatch_{H_{d,n_1}}(\lam)}{ \Zmatch_{G'}(\lam) }  \cdot \frac{ \pr_{H_{d,n_1},\lam} [|\mathbf M|= \lfloor kn_1/n \rfloor]}{  \pr_{G',\lam} [|\mathbf M| = s] } \cdot  \frac{\pr_{H_{d,n_2},\lam} [| \mathbf M | = \lceil kn_2/n \rceil]   }{    \pr_{H_{d,n_2},\lam} [|\mathbf M| =k- s]   }   \ge  \left(\frac{n_1}{2}+1\right) \,.
\end{equation}
 Using Theorem~\ref{thm:gdedenko} (Gnedenko's Local Limit Theorem)
, we see that  since $\lceil kn_2/n \rceil $ is within $1$ of the mean size of matching in $H_{d,n_2}$ at fugacity $\lam$
, we have 
\[
\pr_{H_{d,n_2},\lam} [| \mathbf M | = \lceil kn_2/n \rceil] \geq \Omega(1/\sqrt{n_2})\,,
\]
and 
\[ \pr_{H_{d,n_2},\lam} [| \mathbf M | =k- s] \leq O(1/\sqrt{n_2})\,.
\]
Therefore the factor $ \frac{\pr_{H_{d,n_2},\lam} [| \mathbf M | = \lceil kn_2/n \rceil]   }{    \pr_{H_{d,n_2},\lam} [| \mathbf M | =k- s]   } $ is lower bounded by a constant depending only on $\eps$ and $d$, independent of $n$ and $n_1$, call this $\eta(\eps, d)$. Likewise by Theorem~\ref{thm:gdedenko} again since $\lfloor kn_1/n \rfloor $ is within $1$ of the mean size  of the random matching in $H'$ at fugacity $\lam$, the factor   $\pr_{H_{d,n_1},\lam} [| \mathbf M | = \lfloor kn_1/n \rfloor]$ is lower bounded by $1/ \sqrt{n_1}$ times another constant $\eta'(\eps,d)$.  Finally, $ \pr_{G',\lam} [| \mathbf M| = s]  \le 1$.   Using Theorem~\ref{thm:stability}, we can choose $N_1= N_1(\eps,d)$ large enough that for all $n_1>N_1$,  
\begin{align*}
\frac{\Zmatch_{H_{d,n_1}}(\lam)}{ \Zmatch_{G'}(\lam) } &\ge  \left(\frac{n_1}{2}+1\right) \cdot \frac{\sqrt{n_1}}{\eta(\eps,d) \cdot \eta'(\eps,d) } \, ,
\end{align*}
which gives~\eqref{eqlargeMatchZ}.
\end{proof}

\section{A hierarchy of extremal results for partition functions and their coefficients}
\label{sec:hierarchy}

In this section we investigate several possible ways in which one partition function can dominate  another. 
We discuss notions of dominance that correspond to extremal results in the literature, and prove general implications between them. 

In Section~\ref{sec:intro} we discussed dominance in terms of counting (evaluating the partition function at $\lam=1$); evaluating the partition function at any $\lam$ (Theorems~\ref{thm:matchPart} and~\ref{thm:indPart}); in terms of individual coefficients (Conjectures~\ref{conj:IndGivenSize} and~\ref{conj:UMC}); and in terms of the highest degree coefficient (Bregman's Theorem~\cite{bregman1973some}).  In Section~\ref{sec:probability} we discussed dominance in terms of the logarithmic derivative or occupancy fraction (Theorems~\ref{thm:indOcc} and~\ref{thm:colorOcc}).  

Another notion of dominance relates to the canonical ensemble. 
The occupancy fractions and internal energy are not interesting observables in the canonical ensemble; they are fixed at $k/|V(G)|$ and $k/|E(G)|$ respectively by definition.  Instead we consider the \textit{free volume}.  For an independent set $I$ of a graph $G$ we let $F\ind_G(I)$ denote the set of vertices of $G$ that are neither in $I$ nor adjacent to $I$.  That is, they are precisely the vertices $v$ which can be added to $I$ to make a larger independent set.  This is the free volume of $I$ in $G$.  For a matching $M$ we let $F\match_G(M)$ denote the set of edges of $G$ that are neither in $M$ nor incident to an edge of $M$.

Now consider the expected free volume when choosing a random independent set or matching from the corresponding canonical ensemble; we denote these by
\[f\ind_k(G) = \E |F\ind_{G}(\mathbf I_k)|\,,\]
\[f\match_k(G) = \E |F\match_{G}(\mathbf M_k)|\,.\]
Observe that we can write $f\ind_k(G) $ and $f\match_k(G)$ in terms of the coefficients of the respective partition functions:
\[f\ind_k(G) = (k+1) \frac{ i_{k+1}(G)  }{  i_k(G)  } \,,\]
\[f\match_k(G) = (k+1) \frac{ m_{k+1}(G)  }{  m_k(G)  } \,.\]
We can ask for dominance of the expected free volume in the canonical ensemble, that is, showing that $f_k^{\mathrm{ind}}(G) = (k+1)\frac{i_{k+1}(G)}{i_k(G)}$ is maximized for all $k$ by some particular extremal graph.  
In~\cite{Davies2015} we conjectured $H_{d,n}$ has this property for matchings and independents sets in a regular graphs.

\begin{conj}[\cite{Davies2015}]
\label{conj:FVmax}
Let $2d$ divide $n$. Then for all $k$; over $d$-regular, $n$-vertex graphs $G$, the quantities $f\ind_k(G)$ and $f\match_k(G)$ are maximized by $H_{d,n}$.
\end{conj}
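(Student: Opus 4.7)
Rewriting $f\ind_k(G)=(k+1)i_{k+1}(G)/i_k(G)$, the conjecture is equivalent to the Newton-like inequality $i_{k+1}(G)\,i_k(H_{d,n})\le i_{k+1}(H_{d,n})\,i_k(G)$, which strengthens Theorem~\ref{thm:exactslices} from individual coefficients to their consecutive ratios.  My plan is to split on $k$: the bulk regime $\eps n\le k\le (1-\eps)n/2$ is attacked via the grand canonical/canonical correspondence developed in Section~\ref{sec:transfer}, and the extreme regimes ($k$ very small, or $k$ within $\eps n$ of $n/2$) are handled separately, in the spirit of the \textbf{Small-1} and \textbf{Large} case splits in the proof of Theorem~\ref{thm:exactslices}.

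For the bulk, choose fugacity $\lam^\ast$ so that the grand canonical mean $\mu_H:=n\alpind_{H_{d,n}}(\lam^\ast)$ equals $k+\tfrac12$, and use the identity
\[
  f\ind_k(G)\;=\;\frac{k+1}{\lam^\ast}\cdot\frac{\P_{G,\lam^\ast}(|\mathbf I|=k+1)}{\P_{G,\lam^\ast}(|\mathbf I|=k)}.
\]
On $H_{d,n}$ the sizes of $\mathbf I$ restricted to each $K_{d,d}$ component are i.i.d., so Gnedenko's local limit theorem (Theorem~\ref{thm:gdedenko}) yields $f\ind_k(H_{d,n})=(1+o(1))(k+1)/\lam^\ast$.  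By Theorem~\ref{thm:indOcc} we have $\mu_G:=n\alpind_G(\lam^\ast)\le \mu_H=k+\tfrac12$, so the coefficient index $k+1$ already lies to the right of the grand canonical mean on $G$, which heuristically forces the consecutive-probability ratio to be $\le 1$, matching $H_{d,n}$ to leading order.  Combined with Proposition~\ref{prop:occ-stability}, one would further expect $\mu_H-\mu_G$ to grow linearly in $\delta_\circ(G,K_{d,d})$, giving a strict gap when $G$ is far from $H_{d,n}$.

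The main obstacle is converting the mean inequality $\mu_G\le\mu_H$ into a pointwise bound on the ratio of consecutive coefficients of $Z\ind_G$, since there is no local limit theorem available for an arbitrary $d$-regular $G$.  For matchings this looks more tractable: the Heilmann--Lieb theorem gives log-concavity of $m_k(G)$ and hence monotonicity of $m_{k+1}(G)/m_k(G)$ in $k$, and the averaged bound
\[
  \sum_k \P_{G,\lam^\ast}(|\mathbf M|=k)\, f\match_k(G) \;=\; \frac{|E(G)|\,\alpmatch_G(\lam^\ast)}{\lam^\ast}\;\le\;\frac{|E(G)|\,\alpmatch_{H_{d,n}}(\lam^\ast)}{\lam^\ast}
\]
can then be transferred to the individual $f\match_k(G)$ via concentration of $|\mathbf M|$ around its mean.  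For independent sets, the absence of a general real-rootedness theorem is the serious barrier; one would need either a variance bound on $|\mathbf I|$ derived from the spatial Markov property and the linear programs of Section~\ref{sec:proofs-stability}, or a finer argument exploiting the specific component structure of $H_{d,n}$.  I expect the independent-set case to be substantially harder than the matching case, and the endpoint regimes (where Theorem~\ref{thm:exactslices} itself is open) to require genuinely new ideas not contained in the present paper.
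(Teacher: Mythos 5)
The item you've been asked to prove is \emph{labelled as a conjecture in the paper} (Conjecture~\ref{conj:FVmax}), attributed to~\cite{Davies2015}; the paper does not prove it, and it remains open. There is therefore no proof in the paper to compare against. What the paper does establish (Proposition~\ref{thm:hierarchy}) is that \textbf{FV} sits at the top of the hierarchy --- it would imply \textbf{COEF}, \textbf{OCC}, \textbf{PART}, \textbf{COUNT}, and \textbf{MAX} --- which makes it strictly stronger than the partial results (Theorem~\ref{thm:exactslices}) actually proved here. Your proposal is an honest sketch of an approach, not a proof, and you yourself flag the places it breaks; let me make the central obstruction precise.

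The identity $f\match_k(G)=(k+1)m_{k+1}(G)/m_k(G)$ and the averaged relation $\sum_k \Pr_{G,\lam}(|\mathbf M|=k)\,f\match_k(G)=|E(G)|\alpmatch_G(\lam)/\lam$ are both correct, and Theorem~\ref{thm:indOcc} does give the averaged inequality. But ``averaged $\le$'' plus Heilmann--Lieb log-concavity does not yield ``pointwise $\le$.'' Log-concavity says $m_{k+1}/m_k$ is decreasing in $k$, not that it lies below the corresponding ratio for $H_{d,n}$; and concentration of $|\mathbf M|$ around its mean only says the average is dominated by terms near the mean, which is consistent with $f\match_k(G)>f\match_k(H_{d,n})$ for some $k$ in that window compensated by the reverse inequality at neighboring $k$. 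To turn the averaged inequality into a pointwise one you would need something like a pointwise comparison of the two log-concave sequences (a rearrangement or majorization argument), and that is exactly the content of \textbf{FV}: the argument is circular as stated. A second independent issue is your assertion that $\mu_G\le\mu_H$ ``forces the consecutive-probability ratio to be $\le1$'' at $k+1$: comparing an index to the mean does not control the ratio of consecutive point masses even for a log-concave distribution, since the mode and mean need not coincide and the decay rate to the right of the mode is not determined by the mean alone. For independent sets the situation is worse, as you note, since $i_k$ is not known to be log-concave for general $G$. The conjecture genuinely requires ideas beyond what is in this paper.
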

As we show below in Proposition~\ref{thm:hierarchy}, dominance of the expected free volume in the canonical model implies all of the other notions of dominance of partition functions and their coefficients mentioned above. The notion was already investigated implicitly for lower bounds in the hard-core model by Cutler and Radcliffe~\cite{cutler2014maximum}.

\begin{theorem}[Cutler, Radcliffe \cite{cutler2014maximum}]
\label{thm:CutRad}
Let $d+1$ divide $n$. Then for any $d$-regular $G$ on $n$ vertices, and any $1 \le k \le n/(d+1)$, 
\begin{equation}
\label{eq:cutlerRad}
 f_k^{\mathrm{ind}}(G) \ge f_k^{\mathrm{ind}}(\CLdn),
 \end{equation}
where $\CLdn$ is the union of $n/(d+1)$ disjoint cliques on $d+1$ vertices. 
As a consequence, for all $k$ we have 
\[  i_k(G) \ge   i_k (\CLdn)  \,.\]
\end{theorem}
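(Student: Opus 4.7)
The plan is to establish the free volume inequality \eqref{eq:cutlerRad} by a pointwise argument using only degree counting, and then deduce the coefficient bound $i_k(G) \ge i_k(\CLdn)$ by a simple induction on $k$ via the identity $f\ind_k(G) = (k+1)\, i_{k+1}(G)/i_k(G)$.

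First I would prove the pointwise bound that for \emph{every} independent set $I$ in a $d$-regular graph $G$,
\[
|F\ind_G(I)| \;=\; n - |I \cup N(I)| \;\ge\; n - (d+1)|I|\,,
\]
with equality iff the closed neighborhoods $\{N[v] : v\in I\}$ are pairwise disjoint. This is immediate from $|N[v]| = d+1$ and the union bound. Averaging over $\mathbf{I}_k$ gives $f\ind_k(G) \ge n - (d+1)k$ for every $k$ with $i_k(G) > 0$.

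Next I would check that $\CLdn$ attains this bound with equality for every $k$ in the relevant range. Any independent set of $\CLdn$ picks at most one vertex from each $(d{+}1)$-clique; when $|I| = k \le n/(d+1)$ the closed neighborhoods are exactly the $k$ selected cliques, which are pairwise disjoint by construction. Hence $|F\ind_{\CLdn}(I)| = n - (d+1)k$ deterministically, so $f\ind_k(\CLdn) = n - (d+1)k$, giving \eqref{eq:cutlerRad}.

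For the coefficient consequence, rewrite \eqref{eq:cutlerRad} using the identity $f\ind_k(G) = (k+1)\, i_{k+1}(G)/i_k(G)$ (valid whenever $i_k(G) > 0$) to obtain the ratio inequality
\[
\frac{i_{k+1}(G)}{i_k(G)} \;\ge\; \frac{i_{k+1}(\CLdn)}{i_k(\CLdn)}\,.
\]
Starting from the trivial base case $i_0(G) = 1 = i_0(\CLdn)$, induction on $k$ yields $i_k(G) \ge i_k(\CLdn)$ throughout the range $0 \le k \le n/(d+1)$. For $k > n/(d+1)$ the graph $\CLdn$ has no independent set of size $k$, so $i_k(\CLdn) = 0$ and the inequality is automatic.

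There is no real obstacle here; the content of the theorem is already captured by the elementary degree-counting lower bound on $|F\ind_G(I)|$. The only point that requires any care is the observation that disjoint-neighborhood equality is genuinely achieved by $\CLdn$ for \emph{every} admissible $k$ simultaneously, which is what makes the telescoping induction work cleanly.
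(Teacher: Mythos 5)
Your proposal is correct and takes essentially the same approach as the paper: both rest on the pointwise bound $|F\ind_G(I)| = n - |I\cup N(I)| \ge n - (d+1)|I|$ from the union of closed neighborhoods, equality for $\CLdn$ because the chosen cliques are disjoint, and the deduction of $i_k(G)\ge i_k(\CLdn)$ via the ratio identity and induction (which is the \textbf{FV} implies \textbf{COEF} step of Proposition~\ref{thm:hierarchy}). You simply make the telescoping induction and the trivial $k > n/(d+1)$ case explicit, which the paper's brief prose leaves to the reader.
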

In fact, the free volume interpretation gives a very short proof of the theorem: in a $d$-regular graph each vertex in an independent set covers $d$ other vertices; in a union of cliques the sets covered by each vertex in an independent are disjoint, and so $\CLdn$ maximizes the expected number of covered vertices in the canonical model, or equivalently, it minimizes the expected free volume.

In the next section we describe the rich picture of implications between the different notions of dominance described above.

\subsection{A hierarchy of counting theorems}

Let $Z_G(\lam) = \sum_{k=0}^n c_k(G) \lam^k $ and $Z_H(\lam) = \sum_{k=0}^n c_k(H) \lam^k $, and suppose $c_0(G) =c_0(H)$ and that $c_k(G), c_k(H) \ge 0$ for all $k$. 

Consider the following statements:
\begin{description}
\item[COUNT] $\sum_{k=0}^n c_k(G) \ge \sum_{k=0}^n c_k(H)$.
\item[PART] $Z_G(\lam) \ge Z_H(\lam)$ for all $\lam \ge 0$.
\item[COEF] $c_k(G) \ge c_k(H)$ for all $1 \le k \le n$.
\item[OCC] $\frac{\lam Z_G'(\lam)}{Z_G(\lam)} \ge \frac{\lam Z_H'(\lam)}{Z_H(\lam)}$ for all~$\lam >0$.
\item[MAX] $c_n(G) \ge c_n(H)$.
\item[FV] $\frac{c_{k+1}(G)}{c_k(G)} \ge \frac{c_{k+1}(H)}{c_k(H)}$ for all $0 \le k \le n-1$.
\end{description}

For the independence polynomial of a regular graph, the theorems of Kahn, Galvin and Tetali, Zhao, and Theorem~\ref{thm:indOcc} state that \textbf{COUNT}, \textbf{PART}, and \textbf{OCC} hold with  $ G=H_{d,n}$ and $H$ any $d$-regular graph on $n$ vertices; the statements \textbf{COEF} and \textbf{FV} are Conjectures~\ref{conj:IndGivenSize} and \ref{conj:FVmax}. 

For the matching polynomial of a regular graph, Bregman's theorem is \textbf{MAX}, while Theorem~\ref{thm:indOcc} is \textbf{OCC}.
By the following proposition this implies \textbf{COUNT} and \textbf{PART} and provides an alternative proof of \textbf{MAX}.   The statements \textbf{COEF} and \textbf{FV} are Conjectures~\ref{conj:UMC} and \ref{conj:FVmax}.

\begin{prop}
\label{thm:hierarchy}
Let $Z_G(\lam)$, $Z_H(\lam)$ be defined as above. Then
\begin{enumerate}
\item {\bf PART} implies {\bf COUNT}.
\item {\bf PART} implies {\bf MAX}.
\item {\bf COEF} implies {\bf PART}. 
\item {\bf OCC} implies {\bf PART}.\label{itm:occ-part}
\item {\bf FV} implies {\bf COEF} and {\bf OCC}.
\item {\bf COUNT} and {\bf MAX} are incomparable in general.
\item  {\bf COEF} and {\bf OCC} are incomparable in general. 
\end{enumerate}
\end{prop}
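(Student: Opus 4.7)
The plan is to dispatch items (1)--(5) with short direct arguments and then to establish the incomparabilities (6) and (7) by exhibiting explicit polynomial pairs with $c_0(G) = c_0(H)$ and nonnegative coefficients.

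For the implications, I would proceed as follows. (1) is immediate by setting $\lam = 1$; (3) follows by multiplying the coefficient inequalities by $\lam^k \ge 0$ and summing; (2) follows by dividing \textbf{PART} through by $\lam^n$ and letting $\lam \to \infty$, since $Z_G(\lam)/\lam^n \to c_n(G)$. For (4), \textbf{OCC} for $\lam > 0$ is equivalent to $(\log Z_G)'(\lam) \ge (\log Z_H)'(\lam)$, and integrating from $0$ to $\lam$ while using $Z_G(0) = Z_H(0) = c_0$ yields \textbf{PART}. For (5), I would use a probabilistic interpretation: let $K_G(\lam)$ be the integer random variable with $\Pr(K_G(\lam) = k) \propto c_k(G) \lam^k$. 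Then \textbf{FV} is exactly the assertion that $K_G(\lam)$ dominates $K_H(\lam)$ in the monotone likelihood ratio order at every $\lam > 0$. Telescoping the ratio inequalities together with $c_0(G) = c_0(H)$ gives \textbf{COEF}, and likelihood ratio domination implies expectation domination $\mathbb{E}K_G(\lam) \ge \mathbb{E} K_H(\lam)$, which is \textbf{OCC}.

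For (6), small polynomial pairs suffice. Taking $Z_G = 1 + 100\lam + \lam^2$ and $Z_H = 1 + \lam + 2\lam^2$ gives $Z_G(1) = 102 > 4 = Z_H(1)$ yet $c_2(G) < c_2(H)$, so \textbf{COUNT} holds without \textbf{MAX}; swapping the roles of $G$ and $H$ produces the reverse. For one direction of (7), take $Z_G = 1 + 10\lam + \lam^2$ and $Z_H = 1 + \lam + \lam^2$: \textbf{COEF} is immediate, while a one-line calculation gives $Z_G'Z_H - Z_G Z_H' = 9(1 - \lam^2)$, so \textbf{OCC} fails for $\lam > 1$.

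The main technical obstacle is the opposite direction of (7), namely producing a pair witnessing \textbf{OCC} but not \textbf{COEF}. This requires $Z_G/Z_H$ to be globally nondecreasing on $[0,\infty)$ while some intermediate coefficient of $G$ is strictly smaller than that of $H$. My approach is to zero out an intermediate coefficient of $Z_G$ and compensate with an inflated leading coefficient. Concretely I would propose $Z_G = 1 + 10\lam + 100\lam^3$ against $Z_H = 1 + \lam + 9\lam^2 + \lam^3$, where $c_2(G) = 0 < 9 = c_2(H)$. The remaining task is to verify that $Z_G' Z_H - Z_G Z_H'$, which expands to the degree-four polynomial $9 - 18\lam + 207\lam^2 + 180\lam^3 + 900\lam^4$, is nonnegative on $[0,\infty)$; this is a routine one-variable calculus check since the negative linear term is quickly dominated by the positive quadratic and higher-order terms (its unique local minimum sits well above zero). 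If the safety margin proves too tight, the leading coefficient can be inflated further without altering any of the qualitative properties needed.
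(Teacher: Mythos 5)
Your proof is correct throughout. Items (1)--(4) and the \textbf{FV} $\Rightarrow$ \textbf{COEF} half of (5) match the paper's arguments. The genuinely different piece is \textbf{FV} $\Rightarrow$ \textbf{OCC}: the paper expands $\lam(Z_G'Z_H - Z_GZ_H')$ and shows each coefficient $r_k$ is nonnegative by pairing terms $j$ and $k-j+1$ and applying \textbf{FV} repeatedly, which is a somewhat involved combinatorial manipulation; you instead observe that \textbf{FV} is precisely the statement that the canonical-ensemble distributions $K_G(\lam)$ and $K_H(\lam)$ are ordered in the monotone likelihood ratio order, and then invoke the standard chain MLR $\Rightarrow$ stochastic dominance $\Rightarrow$ $\E K_G(\lam) \ge \E K_H(\lam) = $ \textbf{OCC}. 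This is cleaner and more conceptual, at the cost of importing a fact from probability rather than proving everything from scratch; it also exposes the statistical-physics meaning of \textbf{FV} in a way the coefficient computation does not. (Both your proof and the paper's gloss over what happens if some intermediate $c_k$ vanishes, but this is an issue with the framing of \textbf{FV} itself, not with either proof.) Your explicit counterexamples for (6) and (7) differ numerically from the paper's --- which uses $1+5\lam+2\lam^2$ vs.\ $1+2\lam+3\lam^2$ for (6), $1+2\lam+\lam^2$ vs.\ $1+3\lam+\lam^2$ and $1+5\lam+5\lam^2+5\lam^3$ vs.\ $1+4\lam+6\lam^2+\lam^3$ for (7) --- but yours check out: in particular $Z_G'Z_H - Z_GZ_H' = 9 - 18\lam + 207\lam^2 + 180\lam^3 + 900\lam^4$ is indeed nonnegative since already $9 - 18\lam + 207\lam^2$ has negative discriminant.
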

  \begin{figure}
\centering
\caption{The implications of Proposition~\ref{thm:hierarchy}}
\includegraphics[scale=0.25]{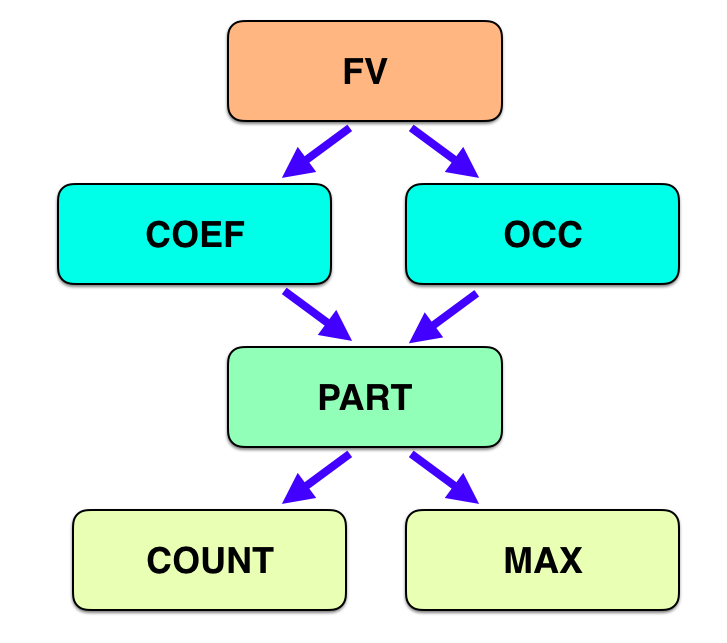}
\end{figure}

\begin{proof}\noindent
\begin{enumerate}
\item Immediate by taking $\lam =1$.
\item Take $\lam \to \infty$.  
\item Immediate since the coefficients are all non-negative. 
\item We calculate
\begin{align*}
 \log Z_G(\lam) &= \log Z_G(0) +  \int_{0}^\lam (\log Z_G(t))' \, dt \\
 &= \log c_0(G) +  \int_{0}^\lam \frac{ Z_G'(t)  }{ Z_G(t)   } \, dt \\
&\ge \log c_0(H) +  \int_{0}^\lam \frac{ Z_H'(t)  }{ Z_H(t)   } \, dt \\
 &=\log Z_H(0) +  \int_{0}^\lam (\log Z_H(t))' \, dt \\
 &= \log Z_H(\lam). 
\end{align*}
\item {\bf FV} implies {\bf COEF} by induction on $k$.

To show {\bf FV} implies {\bf OCC}, we need to show
\begin{align*}
\left( \sum_{j=1}^n j c_j(G) \lam^{j-1}  \right)  \left( \sum_{j=0}^n c_j(H) \lam^j  \right)  -  \left( \sum_{j=1}^n j c_j(H) \lam^{j-1}  \right)  \left( \sum_{j=0}^n c_j(G) \lam^j  \right)&\ge0\,.
\end{align*}
Writing the left hand side as $ \sum_{j=0}^n r_j \lam^{j} $, we will in fact show that $r_j\ge0$ for all $j$.  
Let $a_i=c_i(G)$ and $b_i=c_i(H)$. 
We have $r_0=a_1b_0- b_1a_0$  and so $r_0\ge0$ by \textbf{FV} and $a_0=b_0$.  Similarly we have $r_1=a_2b_0 -b_2a_0$ and
so we see that $r_1\ge0$ by multiplying two cases of \textbf{FV}. Continuing, we have 
\begin{align*}
r_2=3 a_3b_0  +  a_2 b_1   -3 b_3a_0  -  a_1 b_2 
\end{align*}
and so $r_2\ge 0$ since $a_0=b_0$, $a_3 \ge b_3$, and $a_2/a_1 \ge b_2/b_1$. 

In general we have
\begin{align*}
r_k&=\sum_{j= 0}^k (k-j+1) b_j a_{k-j+1}  -  \sum_{j=0}^k   (k-j+1) a_j b_{k-j+1} \\
&= \sum_{j= 0}^k (k-j+1) b_j a_{k-j+1} - \sum_{j=1}^{k+1} j a_{k-j+1}   b_j \\
&= \sum_{j= 0}^{k+1} (k-j+1) b_j a_{k-j+1} - \sum_{j=0}^{k+1} j a_{k-j+1}   b_j \\
&= \sum_{j= 0}^{k+1} (k-2j+1) b_j a_{k-j+1}  \\
&= \sum_{j=0}^{\lfloor (k+1)/2 \rfloor} (k-2j+1) b_j a_{k-j+1} -\sum_{j=0}^{\lfloor (k+1)/2 \rfloor} (k-2j+1) a_j b_{k-j+1}\, .
\end{align*}
Now considering the last line term by term, we see the inequality $r_k\ge0$ reduces to showing that 
\[ \frac{ a_{k-j+1}  }{ a_j  }  \ge  \frac{ b_{k-j+1}  }{ b_j  }   \]
for $j \le \lfloor (k+1)/2 \rfloor$. This follows by multiplying successive cases of {\bf FV}.

\item {\bf COUNT} and {\bf MAX} are incomparable: take, for example, $Z_G(\lam) = 1+ 5\lam +2 \lam^2$ and $Z_H(\lam) = 1+ 2 \lam + 3 \lam^2$. 

\item {\bf COEF} and {\bf OCC} are incomparable:  take first $Z_G(\lam)=1+ 2\lam+\lam^2$ and $Z_H(\lam) =1+3\lam+\lam^2$.  Then $Z_H$ dominates $Z_G$ coefficient by coefficient, but for large $\lam$ its logarithmic derivative is smaller.    Next take $Z_G(\lam) = 1+ 5  \lam + 5 \lam^2 + 5\lam^3$ and $Z_H(\lam)= 1+ 4 \lam +6 \lam^2 +\lam^3$.  Then $Z_G(\lam)$ has a larger logarithmic derivative at all $\lam$ but does not dominate $Z_H(\lam)$ coefficient by coefficient.  \qedhere    
\end{enumerate}
\end{proof}

In general while {\bf COEF} implies {\bf PART}, the converse does not hold and so Conjectures~\ref{conj:IndGivenSize} and~\ref{conj:UMC} remain open.  Nevertheless,  the method of Section~\ref{sec:transfer} gives a partial converse: that with uniqueness and stability, {\bf PART} implies {\bf COEF} for large enough graphs and almost all values of $k$.

\section{Proofs for other models}\label{sec:appendix}

In this section we complete the proofs of Theorems~\ref{thm:exactslices} and~\ref{thm:stability}, and prove Theorems~\ref{thm:girthslices} and~\ref{thm:colorslices}. 

\subsection{Independent sets}

We first discuss the modifications of the arguments of Sections~\ref{sec:stable} and~\ref{sec:transfer} necessary to obtain Theorems~\ref{thm:exactslices} and~\ref{thm:stability} for independent sets in $d$-regular graphs, and the analogous Theorem~\ref{thm:girthslices} for cubic graphs of girth at least $5$.

To adapt Section~\ref{sec:stable} it suffices to prove Proposition~\ref{prop:occ-stability} for independent sets, and the first part of the following result for independent sets in cubic graphs of girth at least $5$. 
The derivation of stability for the partition function is identical to the case of matchings, see the proof of Theorem~\ref{thm:stability} in Section~\ref{sec:deducestability}. 

\begin{prop}\label{prop:girth-stability}
There exists a continuous function $c^{\mathrm{girth}}(\lam)$ which is strictly positive when $\lam>0$ so that for any $3$-regular graph $G$ of girth at least $5$, and any $\lam \ge 0$,
\begin{align*}
 \alpind_G(\lam) &\le \alpind_{\heawood}(\lam) - c^{\mathrm{girth}}(\lam) \cdot\delta_{\circ}(G,\heawood)\,,
\end{align*}
As a consequence, there is a function $s^{\mathrm{girth}}(\lam)$, strictly increasing in $\lam$ with $s^{\mathrm{girth}}(0)=0$, so that for any $3$-regular graph $G$ and any $\lam \ge 0$,
\begin{align*}
\frac{1}{|V(G)|} \log \Zind_G(\lam) & \le \frac{1}{14} \log \Zind_{\heawood}(\lam) - s^{\mathrm{girth}}(\lam) \cdot \del_{\circ}(G,\heawood).
\end{align*}
\end{prop}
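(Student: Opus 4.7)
The plan is to follow the template of Proposition~\ref{prop:occ-stability}, replacing $K_{d,d}$ by $\heawood$ and the monomer-dimer model by the hard-core model, and starting from the occupancy result of Perarnau--Perkins~\cite{perarnau2016counting} in place of Theorem~\ref{thm:indOcc}. The passage from a stability bound on $\alpind_G(\lam)$ to the stated bound on $\frac{1}{|V(G)|}\log \Zind_G(\lam)$ is, as in the proof of Theorem~\ref{thm:stability} in Section~\ref{sec:deducestability}, a direct integration in $\lam$, so the focus is on the occupancy inequality; the function $s^{\mathrm{girth}}(\lam) = \tfrac{3}{2}\int_0^\lam c^{\mathrm{girth}}(t)/t\,dt$ then provides the required strictly increasing $s^{\mathrm{girth}}$.

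The occupancy result of~\cite{perarnau2016counting} is itself proved via a linear program
\begin{align*}
\alpha^* = \max\ a^T p \quad \text{subject to } Ap \leq b,\ p \geq 0,
\end{align*}
where $p$ ranges over distributions on a finite set $\mathcal L$ of local views in cubic graphs of girth at least $5$, and whose optimum equals $\alpind_{\heawood}(\lam)$, attained by the distribution $p^*$ arising from $\heawood$ itself. Following Section~\ref{sec:proofs-stability}, fix a dual optimum $q^*$ with the constraints corresponding to $\mathrm{supp}(p^*)$ tight, define the slack $s_L = (A^T q^*)_L - a_L$, and let $\mathcal L^* = \{L : s_L = 0\}$. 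By complementary slackness $\mathrm{supp}(p^*)\subseteq \mathcal L^*$, and since Heawood is the unique extremal connected graph in~\cite{perarnau2016counting}, $\mathcal L^*$ consists precisely of the local views realised inside copies of $\heawood$.

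The crux is the analogue of Lemma~\ref{lem:extraconstraint}: if $G$ is 3-regular, of girth at least $5$, and $\delta_\circ(G,\heawood)\ge \delta$, then
\begin{align*}
\pr\bigl(L \notin \mathcal L^*\bigr) \ \ge\ \delta\cdot f(\lam)
\end{align*}
for some continuous $f(\lam)>0$. By the sampling-distance argument of Section~\ref{sec:proofs-stability}, the hypothesis yields a $\delta$-fraction of vertices $v$ of $G$ whose depth-$r$ neighborhood (for the $r$ implicit in $\mathcal L$) is not isomorphic to the corresponding depth-$r$ neighborhood in $\heawood$. For each such $v$ we exhibit an explicit event $A_v$, depending on the offending local structure, that forces the observed local view at $v$ to lie outside $\mathcal L^*$ and has probability bounded below by a positive function of $\lam$; as in the matching case, the lower bound comes from the universal bound $\pr_{G,\lam}(u\in \mathbf I)\le \lam/(1+\lam)$ applied by successive conditioning over the bounded-size depth-$r$ ball. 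Once this lemma is in hand, the augmented-LP duality argument at the end of Section~\ref{sec:proofs-stability} applies verbatim, giving $c^{\mathrm{girth}}(\lam) = f(\lam)\theta^*>0$ with $\theta^* = \min\{s_L : L\notin\mathcal L^*\}$.

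The main obstacle is the case analysis inside this auxiliary lemma. In the matching case, failure to lie in a $K_{d,d}$ is witnessed either by a triangle incident to $e$ or by a short explicit $4$-path, which made the construction of $A_e$ essentially a two-case affair. Deviation from $\heawood$ is richer: $G$ may have girth exactly $5$ (producing $5$-cycles that never occur in $\heawood$), or girth $\ge 6$ while failing to be locally Heawood-like --- other cubic girth-$6$ graphs exist, so one must distinguish Heawood-type configurations of $6$-cycles from non-Heawood ones inside the LP's radius. The proof therefore requires enumerating the finitely many deviations visible at depth $r$ and, for each, writing down an explicit local configuration of the independent set (a small set of vertices forced to be occupied or excluded) whose probability is bounded below uniformly in $v$ and which witnesses that the local view falls outside $\mathcal L^*$. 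This step is routine but technical; no new idea beyond those in Lemma~\ref{lem:extraconstraint} is needed.
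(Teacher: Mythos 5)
Your high-level strategy matches the paper's: port the LP/slack argument from Proposition~\ref{prop:occ-stability}, prove an analogue of Lemma~\ref{lem:extraconstraint} for cubic girth-$\ge 5$ graphs relative to $\heawood$, and then integrate $c^{\mathrm{girth}}(t)/t$ to get $s^{\mathrm{girth}}$. That is exactly what the paper does.

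However, there is a concrete error in your proposal that the paper explicitly warns about. You claim that ``since Heawood is the unique extremal connected graph, $\mathcal L^*$ consists precisely of the local views realised inside copies of $\heawood$.'' This inference is false and is called out in the paper: for the girth-$5$ LP from~\cite{perarnau2016counting}, the set of local views with zero slack is \emph{strictly larger} than the set of local views occurring in $\heawood$. Uniqueness of the extremal graph only tells you that distributions arising from non-$\heawood$ graphs must put mass on at least one positive-slack view; it does not say that every non-$\heawood$ local view has positive slack. If you were to use your claimed characterisation of $\mathcal L^*$ and merely exhibit an event $A_v$ forcing a non-Heawood local view, you would not be able to conclude $L\notin\mathcal L^*$, and the augmented dual bound would not follow. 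The correct route is to keep $\mathcal L^*$ as the zero-slack set (as you defined it at first) and verify, for each candidate local view produced by the event $A_v$, that its slack is actually positive.

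You are also vague on the combinatorial mechanism that makes the case analysis tractable. The paper's proof hinges on a specific fact: $\heawood$ is the only connected cubic graph of girth $\ge 5$ containing a vertex $v$ that is not on a $5$-cycle and has $\abs{N_3(v)}=4$. This yields a clean dichotomy for any $v$ not in a copy of $\heawood$: either $v$ lies on a $5$-cycle (and $A_v$ is the event that the $5$-cycle survives into $L$), or $v$ is not on a $5$-cycle and hence $\abs{N_3(v)}\ge 5$, so some $w\in N_3(v)$ is attached to only $1$ or $2$ vertices of $N_2(v)$, and $A_v$ is $\{w\}=\mathbf I\cap N_3(v)$, which forces the second-neighbourhood ``signature'' $(1,2,2)$ or $(1,1,2)$. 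Both signatures and the $5$-cycle view are then checked to lie outside the zero-slack set, giving $\Pr[A_v]\ge \min\{\lam(1+\lam)^{-15},(1+\lam)^{-2}\}$. Your proposal gestures at ``enumerating the finitely many deviations'' but does not supply the structural fact that collapses the enumeration to two cases; without it, the ``routine but technical'' step you defer is in fact the content of the proof.
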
 

The manipulation of the linear programs from~\cite{Davies2015,perarnau2016counting} which show that distributions arising in $K_{d,d}$ and $\heawood$ (or unions thereof) are the unique maximizers of $\alpind_G(\lam)$ over the relevant graph classes is the same as in the proof of Proposition~\ref{prop:occ-stability}, hence it suffices to give an analogue of Lemma~\ref{lem:extraconstraint} for each case. 
For the case of $3$-regular graphs of girth at least $5$ there is a minor additional consideration; the sets consisting of local views which occur in $\heawood$ and of local views with slack equal to zero do not coincide. In the proof we work with the latter set, so the methods are identical. 

For a vertex $v$ in a graph $G$, write $N_t(v)$ for the set of vertices at distance exactly $t$ from $v$ in $G$, and $\overline N_t(v)$ for the vertices at distance at most $t$ from $v$ in $G$. 
We will consider the subgraphs graph of $G$ induced by these sets, denoted e.g.\ $G[N_t(v)]$.

\begin{lemma}
Let $G$ be a $d$-regular graph and $\lam>0$. 
Let $L$ be the random local view obtained by drawing an independent set $\mathbf I$ from the hard-core model on $G$ at fugacity $\lam$, drawing a vertex $v$ in $G$ uniformly at random, and setting $L$ to be the induced subgraph of $G$ on the neighbors of $v$ which are not adjacent to any vertex in $\mathbf I\setminus N(v)$. 
Write $\mathcal L_d^*=\{ \emptyset, \overline{K_d}\}$ for the set of local views which may arise in $K_{d,d}$ (and which are exactly the local views which have zero slack in the dual linear program for independent sets from~\cite{Davies2015}). 
Then
\begin{align}
\Pr \big[  L \notin \mathcal L_d^*  \big]  &> \delta_\circ(G,K_{d,d})\cdot\frac{\lam}{(1+\lam)^{2d+1}}\,.
\end{align}
\end{lemma}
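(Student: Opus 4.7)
The plan is to imitate the proof of Lemma~\ref{lem:extraconstraint} for matchings, making the minor adjustments needed to pass from edges to vertices and from the monomer-dimer model to the hard-core model. First, the sampling-distance argument from Lemma~\ref{lem:extraconstraint} goes through verbatim to show that if $\delta_\circ(G,K_{d,d})\ge\delta$, then at least a $\delta$-fraction of the vertices of $G$ do not lie in a component isomorphic to $K_{d,d}$ (for any radius $r\ge 2$, the only depth-$r$ neighborhood of a vertex in $K_{d,d}$ is $K_{d,d}$ itself). It therefore suffices to show that for each such vertex $v$, the random local view $L$ centered at $v$ satisfies $L\notin\mathcal L_d^*$ with probability at least $\lam/(1+\lam)^{2d+1}$.

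Fix such a $v$ and let $A\subseteq N(v)$ denote the (random) vertex set of $L$. Since $v$ does not lie in a $K_{d,d}$, one of two cases holds. In case (A), $G[N(v)]$ contains an edge $u_1u_2$; set $F_A=(N(u_1)\cup N(u_2))\setminus N(v)$ and consider the event $E_A=\{\mathbf I\cap F_A=\emptyset\}$. On $E_A$ no neighbor of $u_1$ or $u_2$ outside $N(v)$ lies in $\mathbf I$, so $u_1,u_2\in A$ and $L$ contains the edge $u_1u_2$, whence $L\notin\mathcal L_d^*$; note that $|F_A|\le 2d-2$ since $u_1,u_2\in N(v)\cap(N(u_1)\cup N(u_2))$ are removed. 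In case (B), $G[N(v)]$ is empty but some $u_1,u_2\in N(v)$ satisfy $N(u_1)\ne N(u_2)$; choose $w\in N(u_2)\setminus N(u_1)$. Then $w\ne v$ (as $v\in N(u_1)$) and $w\notin N(v)$ (as otherwise $wu_2$ would lie in $G[N(v)]$). Set $F_B=N(u_1)\cup N(w)$ and $E_B=\{\mathbf I\cap F_B=\emptyset\}\cap\{w\in\mathbf I\}$. Neither $u_1$ nor $w$ lies in $F_B$, and $|F_B|\le 2d$. On $E_B$, every neighbor of $u_1$ is forbidden from $\mathbf I$, so $u_1\in A$; and $w\in\mathbf I\setminus N(v)$ is adjacent to $u_2$, so $u_2\notin A$. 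Thus $1\le|A|\le d-1$ and again $L\notin\mathcal L_d^*$.

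For the probabilities, the hard-core analogue of the Fact preceding Lemma~\ref{lem:extraconstraint} says that any vertex lies in $\mathbf I$ with probability at most $\lam/(1+\lam)$, and a standard successive-conditioning argument using this bound (applied to the conditional hard-core model obtained by forbidding a given set of vertices one at a time) yields $\Pr[\mathbf I\cap F=\emptyset]\ge(1+\lam)^{-|F|}$ for any vertex set $F$. This gives $\Pr[E_A]\ge(1+\lam)^{-(2d-2)}$ and $\Pr[\mathbf I\cap F_B=\emptyset]\ge(1+\lam)^{-2d}$. For case (B), since $F_B\supseteq N(w)$, the spatial Markov property of the hard-core model yields $\Pr[w\in\mathbf I\mid \mathbf I\cap F_B=\emptyset]=\lam/(1+\lam)$, so $\Pr[E_B]\ge \lam(1+\lam)^{-(2d+1)}$. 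The elementary inequality $(1+\lam)^3\ge\lam$ for $\lam\ge0$ also gives $\Pr[E_A]\ge\lam(1+\lam)^{-(2d+1)}$, and combining with the first paragraph's vertex-fraction bound completes the proof.

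The main point requiring care will be case (B): the event $E_B$ admits clean probabilistic analysis only because the choice $w\in N(u_2)\setminus N(u_1)$ simultaneously guarantees $w\notin F_B$, $N(w)\subseteq F_B$, and $v\in F_B$. These three properties ensure that the two constraints defining $E_B$ are compatible, that the marginal $\Pr[w\in\mathbf I\mid \mathbf I\cap F_B=\emptyset]$ equals $\lam/(1+\lam)$, and that forcing $w$ into $\mathbf I$ ejects $u_2$ from $A$ while leaving $u_1\in A$. Everything else parallels the matching lemma.
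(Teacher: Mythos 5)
Your proof is correct, and it closely parallels the paper's argument; the main structural difference is that you case-split on whether $G[N(v)]$ contains an edge, mirroring the two-case structure of Lemma~\ref{lem:extraconstraint} for matchings. The paper's proof of the present lemma does not case-split: it picks $u_1,u_2\in N(v)$ with $N(u_1)\ne N(u_2)$, takes $w\in N(u_1)\setminus N(u_2)$, and uses the event $\{\mathbf I\cap N(u_2)=\emptyset\}\cap\{w\in\mathbf I\}$, asserting that this forces $u_2\in L$ and $u_1\notin L$. The claim $u_1\notin L$ in fact requires $w\notin N(v)$, which the paper does not verify and which can fail — for instance in $K_{d+1}$, where $N(u_1)\setminus N(u_2)=\{u_2\}\subseteq N(v)$. (The paper's conclusion $L\notin\mathcal L_d^*$ still survives in that situation, since when $w\in N(v)$ and $w\in\mathbf I$ one has $w\in L$, so $L$ either contains the edge $u_1w$ or misses $u_1$ and thus has fewer than $d$ vertices; but the stated justification is incomplete.) Your split handles this cleanly: in Case (B) the emptiness of $G[N(v)]$ forces $w\notin N(v)$, making the ``one covered, one uncovered'' reasoning airtight, and Case (A) uses a separate event that plants the edge $u_1u_2$ inside $L$. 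Both routes use the same successive-conditioning bound $\Pr[\mathbf I\cap F=\emptyset]\ge(1+\lam)^{-|F|}$ and the spatial Markov identity $\Pr[w\in\mathbf I\mid\mathbf I\cap N(w)=\emptyset]=\lam/(1+\lam)$, and both deliver the claimed constant $\lam(1+\lam)^{-(2d+1)}$.
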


\begin{proof}
Fix an arbitrary $v \in V(G)$ which is not in a copy of $K_{d,d}$, and let $u_1, u_2$ both be neighbors of $v$ so that $N(u_1) \ne N(u_2)$. 
Such $u_1, u_2$ must exist otherwise $v$ is in a copy of $K_{d,d}$. 
Let $w$ be a vertex neighboring $u_1$ but not $u_2$ and write $A_v$ for the event that $\{ \mathbf I \cap N(u_2)=\emptyset \} \cap \{ w \in \mathbf I \}$, where $\mathbf I$ is a random independent set from the hard-core model on $G$.
Note that $A_v$ implies that the local view rooted at $v$ cannot arise in $K_{d,d}$ because $u_2$ is uncovered while $u_1$ is not.

By successive conditioning, the probability that $\mathbf I \cap \{ N(u_2) \cup N(w) \} = \emptyset$ is at least $(1+\lam)^{-2d}$. 
Now conditioned on $I \cap N(w) = \emptyset$, $\Pr[w \in I] = \frac{\lam}{1+ \lam}$, and so $\Pr [A_v] \ge \frac{\lam}{(1+\lam)^{2d+1}}$.
Since $v$ was arbitrary, and at least a fraction $\delta_\circ(G,K_{d,d})$ of the vertices in $V(G)$ are not in a copy of $K_{d,d}$, the required bound follows.
\end{proof}

\begin{lemma}
Let $G$ be a cubic graph of girth at least $5$ and $\lam>0$. 
Let $L$ be the random local view obtained by drawing an independent set $\mathbf I$ from the hard-core model on $G$ at fugacity $\lam$, drawing a vertex $v$ in $G$ uniformly at random, and setting $L$ to be the induced subgraph of $G$ on $\overline N_2(v)$ which are not adjacent to any vertex in $\mathbf I\setminus \overline N_2(v)$.
Write $\mathcal L^*$ for the set of such local views which have zero slack in the dual linear program from~\cite{perarnau2016counting}. 
Then
\begin{align}
\Pr \big[  L \notin \mathcal L^*  \big]  &> \delta_\circ(G,\heawood)\cdot \min\left\{\frac{\lam}{(1+\lam)^{15}},\frac{1}{(1+\lam)^2}\right\}\,.
\end{align}
\end{lemma}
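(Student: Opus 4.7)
The plan is to follow the template of the proof of Lemma~\ref{lem:extraconstraint}, with $K_{d,d}$ replaced by $\heawood$ and local views taken on balls of radius $2$. First I would use properties of $\delta_{\circ}(\cdot,\cdot)$ (as in that proof) to deduce that if $\delta_{\circ}(G,\heawood) \ge \delta$ then at least a $\delta$-fraction of the vertices of $G$ are not in a component isomorphic to $\heawood$: since $\heawood$ has diameter $3$, the only rooted depth-$r$ neighborhood for $r\ge 3$ occurring in a disjoint union of copies of $\heawood$ is $\heawood$ itself, so any TV discrepancy in $\rho_r$ is accounted for by vertices outside $\heawood$-components. For each such vertex $v$ I will construct an event $A_v$ on the random independent set $\mathbf I$ whose occurrence forces the local view $L$ rooted at $v$ to lie outside $\mathcal L^*$, and then average over the $\delta_{\circ}(G,\heawood)$-fraction of such $v$.

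To construct $A_v$ I split into two cases according to the structure of $G[\overline N_2(v)]$. In Case~1, the depth-$2$ ball contains a non-tree edge; since $G$ has girth $\ge 5$, this means $v$ lies on a $5$-cycle, and there exist two vertices $u_1, u_2 \in N_2(v)$ with $u_1u_2 \in E(G)$. Let $A_v$ be the event that no vertex in $\big(N(u_1)\cup N(u_2)\big)\setminus \overline N_2(v)$ is in $\mathbf I$. There are at most two such external vertices (each of $u_1,u_2$ has one neighbor in $N_1(v)$ and one other $u_j$ inside $\overline N_2(v)$, leaving at most one external neighbor each), so by the independent-set analogue of the Fact stated after Lemma~\ref{lem:extraconstraint} and successive conditioning, $\Pr[A_v]\ge (1+\lam)^{-2}$. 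Under $A_v$, both $u_1$ and $u_2$ survive into the vertex set of $L$, so the induced subgraph $L$ contains the edge $u_1u_2$ and therefore a cycle of length $5$, which cannot occur in any local view coming from $\heawood$ (girth $6$); hence $L\notin\mathcal L^*$.

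In Case~2 the depth-$2$ ball $G[\overline N_2(v)]$ is a tree (so matches the depth-$2$ view in $\heawood$), but $v$ is not in a copy of $\heawood$; the structural difference must therefore appear at distance~$3$ or in the adjacency pattern linking $N_2(v)$ to the outside. Using the explicit description of $\mathcal L^*$ from the linear program of~\cite{perarnau2016counting}, I would pick a vertex $w$ at distance $3$ from $v$ whose membership in $\mathbf I$, together with the exclusion of the remaining neighbors of $N_2(v)$ outside $\overline N_2(v)$, forces $L$ to omit at least one vertex of $\overline N_2(v)$ in a configuration that never arises from a $\heawood$-rooted ball. Let $A_v$ be this combined event. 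The total number of vertices constrained by $A_v$ (one forced into $\mathbf I$, at most $14$ forced out, using $|V(\heawood)|=14$ to control the size of the relevant external neighborhood) is at most $15$, so by successive conditioning $\Pr[A_v] \ge \lam(1+\lam)^{-15}$.

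The main obstacle is Case~2: I need to verify that whenever the depth-$2$ ball is tree-like but $v$ is not in a copy of $\heawood$, there really exists a vertex $w$ at distance $3$ whose forced membership produces a local view distinguishable from every element of $\mathcal L^*$. This requires a case check against the support of the extremal dual solution of~\cite{perarnau2016counting} and relies on the fact that $\heawood$ is the unique $(3,6)$-cage, so the rooted-neighborhood signature at radius $3$ identifies $\heawood$-components among all cubic graphs of girth $\ge 5$. Once both cases are handled, combining them yields $\Pr[L\notin\mathcal L^*] \ge \delta_{\circ}(G,\heawood)\cdot\min\{\lam(1+\lam)^{-15},(1+\lam)^{-2}\}$, as required.
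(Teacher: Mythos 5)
Your Case~1 (the vertex $v$ lies on a $5$-cycle) matches the paper's argument exactly: the event that the (at most two) external neighbors of the two $N_2(v)$-endpoints of the non-tree edge avoid $\mathbf I$ forces the $5$-cycle to survive into $L$, giving probability at least $(1+\lam)^{-2}$. This part is correct and is essentially verbatim the paper's treatment.

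The genuine gap is Case~2, and you flag it yourself. You correctly observe that the distinguishing feature must live at distance~$3$ and that some vertex $w \in N_3(v)$ ought to do the job, but you do not exhibit such a $w$ or explain why it must exist. The paper resolves this with a specific structural fact: $\heawood$ is the unique connected cubic graph of girth $\ge 5$ containing a vertex $v$ that lies on no $5$-cycle and has $\abs{N_3(v)}=4$. (To see this: when $G[\overline N_2(v)]$ is a tree it has exactly $6$ leaves in $N_2(v)$, each sending $2$ edges to $N_3(v)$; with $\abs{N_3(v)}=4$ the $12$ edges distribute as $3$ per vertex, and the only girth-$\ge 5$ way to wire them closes up to $\heawood$.) Consequently, if $v$ is not on a $5$-cycle and not in a $\heawood$ component, then $\abs{N_3(v)} \ge 5$, and by pigeonhole some $w \in N_3(v)$ has only $1$ or $2$ neighbors in $N_2(v)$. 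Taking $A_v$ to be the event $\{w\} = \mathbf I \cap N_3(v)$ deletes from $L$ precisely the $N_2(v)$-neighbors of $w$ and no others, yielding a branch-degree profile $(1,2,2)$ or $(1,1,2)$ (one needs here that two $N_2(v)$-neighbors of $w$ cannot share a second common neighbor, by girth $\ge 5$). This profile has strictly positive slack in the dual of~\cite{perarnau2016counting}, and the probability bound $\lam(1+\lam)^{-15}$ comes from $\abs{N_3(v)} \le 12$ together with the three neighbors of $w$, not from $\abs{V(\heawood)}=14$ as you suggest. Your outline would need this existence-of-$w$ argument, with the specific $\abs{N_3(v)}\ge 5$ pigeonhole, to become a proof; as written it defers exactly the step that carries the content of Case~2.
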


\begin{proof}
The proof relies on the fact that $\heawood$ the only connected cubic graph of girth at least $5$ containing a vertex $v$ with which is not in a $5$-cycle and which has $\abs{N_3(v)}=4$. To see this note that in such a graph $G$, the subgraph $G[\overline N_2(v)]$ must be a tree with $6$ leaves. 
If $\abs{N_3(v)}=4$ then it is easy to check that there is a unique way to connect $N_3(v)$ to $N_2(v)$ without creating cycles of length at most $4$, such that each vertex has degree $3$, and that this construction gives a copy of $\heawood$.
Now suppose that $v$ is a vertex of $G$ that not contained in a copy of $\heawood$. There are two cases. 

If $v$ is not contained in a $5$-cycle, then by the above fact there must be at least $5$ vertices in $N_3(v)$. 
Hence there exists a vertex $w\in N_3(v)$ which is connected to either $1$ or $2$ vertices in $N_2(v)$. 
Let $A_v$ be the event that $\{w\}=\mathbf I \cap N_3(v)$. 
If $w$ is connected to $2$ vertices in $N_2(v)$ they cannot have second common neighbor, else $G$ would contain a $4$-cycle. 
Hence the event $A_v$ implies that, writing $N_1(v)=\{u_1, u_2, u_3\}$, the sizes of $N_2(v)\cap N_1(u_i)$ in $L$ are $(1,2,2)$ or $(1,1,2)$.
Using the facts that any vertex is in $\mathbf I$ with probability at most $\lam/(1+\lam)$, and that this holds with equality for any $w$ conditioned on $\mathbf I \cap N(w)=\emptyset$, and that $\abs{N_3(v)}\le 12$, we have $\Pr[A_v] \ge \lam(1+\lam)^{-15}$.

If $v$ is contained in a $5$-cycle then let $A_v$ be the event that the $5$-cycle is present in $L$. Then we have $\Pr[A_v]\ge (1+\lam)^{-2}$.

In each case, the event $A_v$ implies that the local view at $v$ is one that has positive slack. 
The lemma follows.
\end{proof}

It is now straightforward to adapt the methods of Section~\ref{sec:matchslices} and prove Theorem~\ref{thm:exactslices} and~\ref{thm:girthslices} for independent sets: essentially identical variants of all the required results are in place.

\subsection{Colorings}

We now turn to our remaining example, colorings of cubic graphs.  Here the proofs are slightly different but follow the same idea.
In proving Theorem~\ref{thm:colorslices}, the main difference is that Theorem~\ref{thm:colorOcc} holds only for $\lam <1$, and  $H_{3,n}$ only maximizes $Z_G^q(\lam)$ in this range. In fact, for $\lam=1$ the free energy is the same for all graphs, so we cannot hope to show that $H_{3,n}$ maximizes all of the individual coefficients of $Z^q_G(\lam)$. 
Instead we can show that it maximizes the `anti-ferromagnetic' coefficients; i.e.\ those for which $k$ is less than the mean number of monochromatic edges in the non-interacting case $\lam=1$: $\E_{G,1}[ m(G, \boldsymbol \chi)] =\frac{3n}{2q}$  for any $3$-regular graph $G$.  Thus Theorem~\ref{thm:colorslices} holds for all $0 \le k \le (1-\eps)3n/2q$. 

The results for colorings which correspond to those of Section~\ref{sec:stable} are as follows.

\begin{prop}\label{prop:color-stability}
There exists a continuous function $c^q(\lam)$ which is strictly positive when $\lam \in (0,1)$ such that for any $3$-regular graph $G$, any $q \ge 2$, and $\lam \in [0,1]$,
\begin{align*}
 \beta^q_G(\lam) &\ge \beta^q_{K_{3,3}}(\lam) + c^q(\lam) \cdot\delta_{\circ}(G,K_{3,3})\,.
\end{align*}
As a consequence, there is a function $s^q(\lam)$, strictly decreasing in $\lam$ with $s^q(1)=0$, so that for any $3$-regular graph $G$, any $q \ge 2$, and $\lam \in [0,1]$,
\begin{align*}
\frac{1}{|V(G)|} \log Z^q_G(\lam) & \le \frac{1}{6} \log Z^q_{K_{3,3}}(\lam) - s^q(\lam) \cdot \del_{\circ}(G,K_{3,3})\,.
\end{align*}
\end{prop}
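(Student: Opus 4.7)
The plan is to mirror the argument used for matchings in Proposition~\ref{prop:occ-stability}, now applied to the internal energy $\beta^q_G(\lam)$ of the anti-ferromagnetic Potts model via the LP framework of \cite{davies2016extremes}. Recall that in that work, one samples a coloring $\boldsymbol\chi$ and a vertex (or edge) of $G$ at random and records a local view $L$; each cubic graph $G$ then induces a probability distribution $p$ on the finite set $\mathcal L_3$ of possible local views, and the internal energy can be written as $a^T p$ for some vector $a = a(\lam, q)$. The constraints derived in \cite{davies2016extremes} yield a linear program $\min a^T p$ whose optimal value equals $\beta^q_{K_{3,3}}(\lam)$ for $\lam\in[0,1]$, attained by the distribution $p^*$ arising from $K_{3,3}$. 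By complementary slackness, writing $\mathcal L_3^*$ for the set of local views corresponding to zero-slack dual constraints, any primal optimum is supported on $\mathcal L_3^*$.

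The heart of the argument is the Potts analogue of Lemma~\ref{lem:extraconstraint}: there is a function $f^q(\lam)>0$ on $(0,1)$ with
\begin{equation*}
\Pr\bigl[L\notin\mathcal L_3^*\bigr]\;\ge\;\delta_{\circ}(G,K_{3,3})\cdot f^q(\lam)\,.
\end{equation*}
To prove this I would argue as in Section~\ref{sec:proofs-stability}: at least a $\delta_\circ(G,K_{3,3})$-fraction of vertices (equivalently edges) of $G$ lie outside any $K_{3,3}$-component, and for any such edge $e=u_1u_2$ I would exhibit a bounded-radius local event $A_e$, depending only on the coloring of $\overline N_2(e)$, that (a) forces the local view at $e$ to differ from every view arising in $K_{3,3}$ (by producing either a triangle, or a non-bipartite adjacency pattern, or a closed walk incompatible with $K_{3,3}$), and (b) has probability bounded below by a power of $\min(\lam,1-\lam)/(1+(q-1)\lam)$ that is uniform in $G$. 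Summing over the chosen edge yields the lemma. The key ingredient is the Potts version of the trivial bound used in the Fact: for any edge $e$, $\Pr[e\text{ monochromatic}]$ lies in a $[g_1(q,\lam),g_2(q,\lam)]$ interval with both endpoints strictly between $0$ and $1$ when $\lam\in(0,1)$, which is where the restriction to $(0,1)$ enters.

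Once the lemma is in hand, the augmented LP and its dual work exactly as in~\eqref{eq:occ-augdual}: adding the constraint $x^T p\ge \delta f^q(\lam)$ (where $x$ indicates membership of $\mathcal L_3\setminus\mathcal L_3^*$) and taking $\theta^* = \min\{s_L:L\in\mathcal L_3\setminus\mathcal L_3^*\}>0$ in the dual yields
\begin{equation*}
\beta^q_G(\lam)\;\ge\;\beta^q_{K_{3,3}}(\lam) + c^q(\lam)\cdot\delta_{\circ}(G,K_{3,3})\,,\qquad c^q(\lam):=f^q(\lam)\,\theta^*(\lam)\,,
\end{equation*}
with $c^q$ continuous and strictly positive on $(0,1)$. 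To pass to the partition function one integrates backwards from $\lam=1$, using $\partial_t\log Z^q_G(t)=\frac{|E(G)|}{t}\beta^q_G(t)$, the fact that $Z^q_G(1)=q^{|V(G)|}$ for every graph, and $|E(G)|/|V(G)|=3/2$ for cubic $G$; this gives
\begin{equation*}
\frac{1}{|V(G)|}\log Z^q_G(\lam)\;\le\;\frac{1}{6}\log Z^q_{K_{3,3}}(\lam)-\delta_{\circ}(G,K_{3,3})\int_\lam^1 \frac{3\,c^q(t)}{2t}\,dt\,,
\end{equation*}
so one sets $s^q(\lam):=\int_\lam^1 \frac{3c^q(t)}{2t}\,dt$, which is automatically strictly decreasing in $\lam$ with $s^q(1)=0$.

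The main obstacle is the quantitative stability lemma. Unlike in the matching and hard-core cases, the Potts model local events that distinguish a non-$K_{3,3}$ neighborhood depend on the chromatic structure of the sampled coloring and interact with the parameter $q$, so one must carefully design events $A_e$ whose probability remains bounded away from $0$ uniformly in $q$ and uniformly on compact subintervals of $(0,1)$; verifying that the chosen events actually hit $\mathcal L_3\setminus\mathcal L_3^*$ (rather than just a view unlikely in $K_{3,3}$) requires a concrete description of $\mathcal L_3^*$ extracted from the dual solution in \cite{davies2016extremes}, and will be the technically delicate part of the proof.
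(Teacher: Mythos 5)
The high-level plan is the same as the paper's: set up the LP for $\beta^q_G(\lam)$ from~\cite{davies2016extremes}, prove a Potts analogue of Lemma~\ref{lem:extraconstraint} so as to augment the LP with the constraint $x^Tp\ge\delta f^q(\lam)$, read the stability of the internal energy off the augmented dual with $c^q=f^q\theta^*$, and then integrate $\partial_t\log Z^q_G(t) = \tfrac{|E(G)|}{t}\beta^q_G(t)$ \emph{backwards} from $\lam=1$ using $Z^q_G(1)=q^{|V(G)|}$ to obtain the partition-function statement with $s^q(\lam)=\tfrac32\int_\lam^1 c^q(t)/t\,dt$. All of that, including the choice of $s^q$, matches the paper (Lemma~\ref{lem:extracolconst} and the proof of Proposition~\ref{prop:color-stability}).

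Where your sketch diverges, and where I think it would actually fail, is in the design of the local event. You propose an event $A_e$ that ``depend[s] only on the coloring of $\overline N_2(e)$'' but whose effect is to make the local view differ ``by producing either a triangle, or a non-bipartite adjacency pattern, or a closed walk incompatible with $K_{3,3}$.'' These are \emph{graph-structural} features of $G$, not of the coloring: whether there is a triangle near $e$ is determined by $G$ alone and holds with probability $0$ or $1$, so a coloring event cannot ``produce'' one. Moreover a $3$-regular vertex can fail to lie in a $K_{3,3}$ component while its radius-$2$ ball is bipartite and triangle-free (e.g.\ in any cubic graph of girth at least $6$), so the structural features you list need not be present at all. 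What the paper does instead is a \emph{coloring-based} distinction: if $v$ is not in a $K_{d,d}$-component then $v$ has two neighbors $u,w$ with $N(u)\ne N(w)$ (in $K_{d,d}$ all of $v$'s neighbors share the same neighborhood), and with probability at least $(\lam^d/q)^{2d}$ the random Potts coloring assigns $N(u)$ and $N(w)$ genuinely different colorings, which pushes the observed local view off the zero-slack set $\mathcal L_d^*$. That bound uses only the fact that any fixed coloring of $t$ vertices occurs with probability at least $(\lam^d/q)^t$; there is no need for the $1-\lam$ or $1/(1+(q-1)\lam)$ factors in your proposed lower bound. (The vanishing of $c^q$ at $\lam=1$ comes from $\theta^*(1)=0$, not from the probability bound; at $\lam=1$ the internal energy is $1/q$ for every graph and all slacks collapse to zero.) So the gap in your proposal is precisely the ingredient you flagged as the technically delicate part: you need a concrete local coloring event that certifies positive slack, and the triangle/odd-cycle route does not supply one.
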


To prove Proposition~\ref{prop:color-stability} we require an analogue of Lemma~\ref{lem:extraconstraint}. 

\begin{lemma}\label{lem:extracolconst}
Let $G$ be a $d$-regular graph, $q\ge 2$, $0\le\lam<1$, and let $L$ be the random local view obtained by drawing a random $q$-coloring $\boldsymbol\chi$ from the $q$-color Potts model on $G$, sampling a uniformly random vertex $v$ in $G$, and setting $L$ to be the $G[\overline N_2(v)]$ together with the vertices of $N_2(v)$ colored by $\boldsymbol\chi$. Write $\mathcal L_d^*$ for the set of such local views which have zero slack in the dual linear program from~\cite{davies2016extremes}. Then there exists $f(q,d,\lam)>0$ such that
\begin{align}
\Pr \big[ L \notin \mathcal L_d^* \big]  &> \delta_\circ(G,K_{d,d})\cdot f(q,d,\lam)\,.
\end{align}
\end{lemma}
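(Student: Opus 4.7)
The plan is to mirror the proof of Lemma~\ref{lem:extraconstraint}. First I will use its sampling-distance argument verbatim: since for any $r\ge 2$ the only depth-$r$ neighborhood arising in $K_{d,d}$ is $K_{d,d}$ itself, the hypothesis $\delta_\circ(G,K_{d,d})\ge \delta$ forces at least a $\delta$ fraction of vertices $v\in V(G)$ to satisfy $G[\overline N_2(v)]\not\cong K_{d,d}$. In a $d$-regular graph, such $v$ are exactly the vertices not contained in any $K_{d,d}$ component (since if $G[\overline N_2(v)]\cong K_{d,d}$ then every vertex in $\overline N_2(v)$ already has all $d$ of its edges inside, so $\overline N_2(v)$ is a component).

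For each such witnessing $v$, the structural defect at $v$ takes one of two forms: either (i) $v$ lies on a triangle in $G$, or (ii) some two neighbors $u_1,u_2\in N(v)$ satisfy $N(u_1)\neq N(u_2)$, producing a vertex $w\in (N(u_1)\setminus N(u_2))\setminus\{v\}$. In each case I will define an event $A_v$ on the restriction of the random Potts coloring $\boldsymbol\chi$ to $\overline N_2(v)$---for instance, three pairwise distinct colors on a triangle through $v$ in case (i), or $\boldsymbol\chi(w)=\boldsymbol\chi(u_2)$ together with a prescribed pattern on $N(v)$ in case (ii)---which, combined with the structural defect, yields a colored local view that cannot appear in $K_{d,d}$. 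By the complementary-slackness analysis of the dual LP from~\cite{davies2016extremes}, this forces $L\notin\mathcal L_d^*$.

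To bound $\Pr(A_v)$ from below, I will use the Potts analogue of the \emph{Fact} preceding Lemma~\ref{lem:extraconstraint}: by the spatial Markov property, $\Pr(\boldsymbol\chi(u)=c\mid\boldsymbol\chi\text{ on }V\setminus\{u\})\ge \lambda^d/q$ for every vertex $u$, color $c$, and $\lambda\in(0,1)$. Successive conditioning on the $O(d^2)$ vertices of $\overline N_2(v)$ that determine $A_v$ then gives $\Pr(A_v)\ge f(q,d,\lambda)>0$ uniformly in $v$. Summing over the $\ge\delta_\circ(G,K_{d,d})\cdot|V(G)|$ witnessing vertices yields the claim.

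The main obstacle will be confirming that the chosen $A_v$ genuinely lies outside $\mathcal L_d^*$: because $\mathcal L_d^*$ is defined through the dual LP rather than combinatorially, one must translate the structural and chromatic incompatibility between the view at $v$ and any $K_{d,d}$-view into strict positivity of the corresponding dual slack, using the explicit dual variables computed in~\cite{davies2016extremes}. The boundary case $\lambda=0$ (pure proper-coloring model) will need additional care so that $A_v$ is realizable by some proper $q$-coloring of $G$; for $q$ large enough relative to $d$ a simple greedy argument suffices.
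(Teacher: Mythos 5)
Your overall strategy matches the paper's: identify a structural defect at each vertex $v$ not in a $K_{d,d}$ component, define an event $A_v$ on the Potts coloring in a neighborhood of $v$ that forces $L\notin\mathcal L_d^*$, and lower-bound $\Pr(A_v)$ by successive conditioning using the per-vertex estimate $\lambda^d/q$ (which you verify correctly). But the paper uses a cleaner and strictly simpler argument that you have not found, and it dissolves your ``main obstacle.'' The key observation is that in $K_{d,d}$ \emph{every two neighbors of $v$ have identical neighborhoods}, and among $d$-regular graphs only disjoint unions of $K_{d,d}$'s have this property at every vertex; hence any $v$ not in a $K_{d,d}$ component has two neighbors $u,w$ with $N(u)\neq N(w)$. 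This single observation subsumes your case (i): if $v$ lies on a triangle then the two triangle-neighbors $u_1,u_2$ automatically have distinct neighborhoods (since $u_1\in N(u_2)$ but $u_1\notin N(u_1)$), so the two-case split is redundant. The paper then takes $A_v$ to be simply the event that the colorings induced on $N(u)$ and $N(w)$ differ, which has probability at least $(\lambda^d/q)^{2d}$. This choice is also what resolves the link to $\mathcal L_d^*$: any local view arising from $K_{d,d}$ must have all neighbors of the root with identical colored neighborhoods, so a view in which two neighbors have differently colored neighborhoods cannot come from $K_{d,d}$ and therefore (by the complementary-slackness computation in~\cite{davies2016extremes}) has positive slack. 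Your proposed events (three distinct colors on a triangle; $\boldsymbol\chi(w)=\boldsymbol\chi(u_2)$ plus a prescribed pattern on $N(v)$) would require a separate, view-by-view check against the dual variables, which you correctly flag as an obstacle but do not carry out.

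Two further small points. First, your claim that case (i) and case (ii) exhaust the possibilities is not justified; as noted, (i) is a special case of (ii), and (ii) alone is the right formulation. Second, on the boundary $\lambda=0$: both your bound and the paper's degenerate to $0$, so the lemma's ``$f>0$'' should really be read as holding on $(0,1)$, which is exactly what Proposition~\ref{prop:color-stability} uses (the integral $\int_\lambda^1 c^q(t)/t\,dt$ only sees $t>\lambda\geq 0$). Your proposed fix (``greedy argument for $q$ large relative to $d$'') is not available here since the lemma is stated for all $q\ge 2$.
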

\begin{proof}
We show that there is a constant $f'(q,d,\lam)>0$ such that every vertex not in a copy of $K_{d,d}$ has a local view not in $\mathcal L_d^* $ with probability at least $f'(q,d,\lam)$.

For any vertex $v$ in $K_{d,d}$ (or $H_{d,n}$) any two neighbors of $v$ have the same neighborhood as each other.
Unions of complete bipartite graphs are the only graphs with this property.
If $v$ is a vertex not in a copy of $K_{d,d}$ it must have two neighbors ($u$ and $w$) with distinct neighborhoods.
We argue that with probability bounded below by a function of $q$, $d$ and $\lam$ these two neighbors of $v$ have neighborhoods with distinct colorings.
It is clear that the probability that the neighborhoods of $u$ and $w$ have different colorings is positive since every coloring is possible.
We must show that there is a lower bound independent of $n$.
For any set of $t$ vertices every choice of coloring of these vertices happens with probability at least $(\lam^d/q)^{t}$.
This is because there are at most $d \cdot t$ edges incident vertices in the set, so the contribution to the partition function is at least $\lam ^{d \cdot t}$ and the partition function is at most $q^t$ whenever $\lam \leq 1$.
From this we see that there is at least a $(\lam^d/q)^{2d}$ probability that the neighborhoods of $u$ and $w$ are colored differently.
\end{proof}

\begin{proof}[Proof of Proposition~\ref{prop:color-stability}]
As with independent sets, the manipulation of the linear program from~\cite{davies2016extremes} with the additional constraint offered by Lemma~\ref{lem:extracolconst} is the same as in Section~\ref{sec:proofs-stability}. The required statement for the internal energy follows.

We now show how the partition function result follows from the stability of the internal energy.  For $\lam \in [0,1]$, we have:
\begin{align*}
\frac{1}{|V(G)|} \log Z^q_G(\lam) &= \frac{1}{|V(G)|} \log Z^q_G(1) - \frac{d}{2}\cdot\int_{\lam}^1 \frac{\beta^q_G(t)}{t} \, dt \\
&= \log q -  \frac{d}{2}\cdot\int_{\lam}^1 \frac{\beta^q_G(t)}{t} \, dt \\
&\le \log q  - \frac{d}{2}\cdot\int_{\lam}^1\left( \frac{\beta^q_{K_{3,3}}(t)}{t} + \frac{c^q(t) \cdot\delta_{\circ}(G,K_{3,3})}{t}\right) \, dt  \\
&= \frac{1}{6} \log Z^q_{K_{3,3}}(\lam) - s^q(\lam) \cdot\delta_{\circ}(G,K_{3,3}) \,,
\end{align*}
where we can set $s^q(\lam) = \frac{d}{2}\int_{\lam}^1 \frac{c^q(t) }{t} \, dt$. 
\end{proof}
As a corollary of Proposition~\ref{prop:color-stability} we obtain the following stability result on the number of proper colorings. 
\begin{cor}\label{cor:stableColor}
For all $q \geq 2$, there exists $ c(q)>0$ such that for any $n$ divisible by $6$ and any $3$-regular graph $G$ on $n$ vertices, we have
\begin{align*}
& \frac{1}{n} \log  c_0^q(G) \leq  \frac{1}{n} \log c_0^q(H_{3,n}) - c(q)\cdot\delta_{\circ}(G,K_{3,3})
\end{align*}
\end{cor}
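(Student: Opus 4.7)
The plan is to mimic the derivation of Corollary~\ref{cor:bregmanStable}: there the number of perfect matchings was extracted as a $\lambda\to\infty$ limit of $\Zmatch_G(\lambda)$ and the stability bound on the matching partition function was transferred coefficient-wise; here the number of proper $q$-colorings is simply $c_0^q(G) = Z_G^q(0)$, so the natural move is to specialize Proposition~\ref{prop:color-stability} directly at $\lambda = 0$ rather than at any limit.

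Doing so yields
\[
\frac{1}{n}\log c_0^q(G) = \frac{1}{n}\log Z_G^q(0) \leq \frac{1}{6}\log Z^q_{K_{3,3}}(0) - s^q(0)\cdot\delta_{\circ}(G,K_{3,3}) = \frac{1}{6}\log c_0^q(K_{3,3}) - s^q(0)\cdot\delta_{\circ}(G,K_{3,3}).
\]
Since $6 \mid n$ and $H_{3,n}$ is a disjoint union of $n/6$ copies of $K_{3,3}$, one has $c_0^q(H_{3,n}) = c_0^q(K_{3,3})^{n/6}$, so $\tfrac{1}{6}\log c_0^q(K_{3,3}) = \tfrac{1}{n}\log c_0^q(H_{3,n})$. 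Setting $c(q) := s^q(0)$ yields precisely the claimed inequality, and positivity of $c(q)$ is immediate from the proposition's assertion that $s^q$ is strictly decreasing on $[0,1]$ with $s^q(1) = 0$, forcing $s^q(0) > 0$.

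The one mild subtlety worth addressing is that applying the proposition at $\lambda = 0$ presupposes $s^q(0)$ to be finite. From the explicit form $s^q(\lambda) = \tfrac{d}{2}\int_\lambda^1 c^q(t)/t\,dt$ derived in the proof of Proposition~\ref{prop:color-stability}, this reduces to integrability of $c^q(t)/t$ at the origin, which in turn follows from the rate $c^q(t) = O(t)$ as $t \to 0^+$. I would justify this rate by observing that both $\beta^q_G(\lambda)$ and $\beta^q_{K_{3,3}}(\lambda)$ are themselves $O(\lambda)$ as $\lambda \to 0^+$: the leading contribution to $Z_G^q$ in that limit comes from proper colorings, which by definition contribute zero monochromatic edges, so the expected fraction of monochromatic edges scales linearly in $\lambda$. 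The stability gap $c^q(\lambda)$ then inherits this scaling through the occupancy-form inequality in Proposition~\ref{prop:color-stability}. Apart from this bookkeeping check, the argument is a direct substitution that parallels the matching case, and I do not expect any further obstacle.
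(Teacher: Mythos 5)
Your proposal is correct and follows essentially the same route as the paper, which simply declares the proof ``identical to that of Corollary~\ref{cor:bregmanStable}'': apply the partition-function inequality of Proposition~\ref{prop:color-stability}, here at $\lambda=0$ (rather than in the limit $\lambda\to\infty$), use $c_0^q(K_{3,3})^{n/6}=c_0^q(H_{3,n})$, and set $c(q)=s^q(0)>0$. Your finiteness check for $s^q(0)$ via integrability of $c^q(t)/t$ is a valid alternative to the contradiction argument the Bregman corollary uses (namely, that if $s^q$ were unbounded near $0$, the stability inequality at small $\lambda$ would contradict the existence of a proper $q$-coloring in some fixed bipartite $3$-regular $G$ with $\delta_\circ(G,K_{3,3})>0$); just be sure to instantiate the occupancy inequality at such a test graph with $c_0^q>0$ when deducing $c^q(t)=O(t)$, since $\beta^q_G(\lambda)=O(\lambda)$ needs $G$ to actually have a proper $q$-coloring.
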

The proof is identical to that of Corollary~\ref{cor:bregmanStable}.

\subsubsection{Proof of Theorem~\ref{thm:colorslices}}
  Here we keep $d$ as a variable instead of setting $d=3$, to show that if a version of Theorem~\ref{thm:colorOcc} can be proved for $d\ge 4$, then the corresponding result on the individual coefficients will follow. We again split $G$ into two parts, $G'$ of size $n_1$ with no $K_{d,d}$ components, and $H = H_{d,n_2}$, where $n_2 = n-n_1$, consisting of all $K_{d,d}$ components. 
  
  Given $\eps>0$, $d$, and $q$, we choose auxiliary parameters with the following dependence:
\begin{itemize}
\item $\del = \del(d,q)$.
\item $N_1 = N_1(d,q,\eps)$.
\item  $\eps ' = \eps'(N_1,\del,d,q)$.
\item $\del' = \del'(\eps, d,q)$.
\item $N= N(N_1, \eps', \del, \del', d,q  )$. 
\end{itemize}

\begin{proof}[\textbf{Small-1}: $n_1\le N_1$ and $k < \eps^\prime n$]
\hfill

Let $\del = \del(q,d)$ be such that for any $G'$ (with $|G'|=n_1$ divisible by $2d$) containing no copies of $K_{d,d}$ we have 
\begin{equation}
\label{eq:ColorTopgap}
 c^q_{0}(G^\prime) \le (1-\del) c^q_{0}(H_{d,n_1})\,.
 \end{equation}
Such a $\del$ exists by Corollary~\ref{cor:stableColor}.
\begin{claim}
For $q \geq 3$ there exists  $ \eps^\prime$ small enough and $N$ large enough  (as functions of $d,q, \del, N_1$) such that the following holds.
For all $n> N$, $k < \eps^\prime n $,
\begin{equation*}
\label{eqColorEpPrime}
 \frac{c^q_{k-1}(H_{d,n})}{c^q_k(H_{d,n})} < \del \cdot q^{- N_1}\,. 
 \end{equation*} 
\end{claim}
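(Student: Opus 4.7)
The plan is to prove the claim by a direct switching (double-counting) argument that yields an exact algebraic identity, thereby bypassing any need for a local limit theorem. The key observation is that because $H_{d,n}$ decomposes into $K=n/(2d)$ isomorphic copies of $K_{d,d}$, the only way to change a coloring's monochromatic edge count by one is to modify it on a single component.

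Write $a_j=c^q_j(K_{d,d})$ and consider the pair-sets
\begin{align*}
\mathcal A &= \big\{(\chi,C) : \chi\in c^q_{k-1}(H_{d,n}),\ C \text{ is a properly colored component of } \chi\big\}\,,\\
\mathcal B &= \big\{(\chi',C') : \chi'\in c^q_k(H_{d,n}),\ C' \text{ is a component with exactly one mono.\ edge of } \chi'\big\}\,.
\end{align*}
Given $(\chi,C)\in\mathcal A$ and any coloring $\psi$ of $K_{d,d}$ with exactly one mono.\ edge, replacing $\chi|_{V(C)}$ by $\psi$ yields a coloring $\chi'\in c^q_k(H_{d,n})$ in which $C$ has exactly one mono.\ edge, while the stored $\phi=\chi|_{V(C)}$ is a proper coloring of $K_{d,d}$. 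This is a bijection between $\mathcal A\times\{\text{colorings of }K_{d,d}\text{ with one mono.\ edge}\}$ and $\mathcal B\times\{\text{proper colorings of }K_{d,d}\}$, its inverse swapping the roles of $\psi$ and $\phi$. Hence $|\mathcal A|\cdot a_1 = |\mathcal B|\cdot a_0$.

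I then apply elementary pigeonhole: any coloring with $\ell$ mono.\ edges has at most $\ell$ components containing a mono.\ edge, so each $\chi\in c^q_{k-1}(H_{d,n})$ contributes at least $K-(k-1)$ pairs to $\mathcal A$, and each $\chi'\in c^q_k(H_{d,n})$ contributes at most $k$ pairs to $\mathcal B$. Substituting $|\mathcal A|\ge (K-k+1)c^q_{k-1}(H_{d,n})$ and $|\mathcal B|\le k\cdot c^q_k(H_{d,n})$ into the identity gives
\begin{equation*}
\frac{c^q_{k-1}(H_{d,n})}{c^q_k(H_{d,n})}\;\le\;\frac{k}{K-k+1}\cdot\frac{a_0}{a_1}\,.
\end{equation*}
For $k<\eps' n=2d\eps'K$ with $\eps'\le 1/(4d)$ the prefactor is at most $4d\eps'$, so it suffices to choose $\eps'< \del\,q^{-N_1}\,a_1/(4d\,a_0)$ and take $N$ large enough for the estimate to be meaningful (any $N\ge 2d$ works). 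Since $a_0,a_1$ depend only on $d,q$, the parameters $\eps'$ and $N$ depend only on $d,q,\del,N_1$ as required.

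The only place the hypothesis $q\ge 3$ enters is in ensuring $a_1>0$, which is the main (and only) obstacle: assign color $1$ to one vertex on each side of $K_{d,d}$, color $2$ to the remaining vertices of one side, and color $3$ to the remaining vertices of the other side; this uses three colors and produces exactly one monochromatic edge, witnessing $a_1\ge 1$. I anticipate no further difficulty — the counting identity is tight up to constants and the parameter chase is routine.
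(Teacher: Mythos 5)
Your proof is correct and takes essentially the same route as the paper: both are component-switching arguments that recolor a single $K_{d,d}$ from a proper coloring to one with a single monochromatic edge, with $q\ge 3$ entering only to guarantee such a recoloring exists. The paper's version simply fixes one target recoloring and bounds the multiplicity of the resulting map by $k\cdot q^{2d}$, whereas you track all such recolorings to obtain the exact identity $|\mathcal A|\cdot a_1=|\mathcal B|\cdot a_0$; this yields the marginally sharper constant $a_0/a_1\le q^{2d}$ in place of $q^{2d}$, a cosmetic refinement that does not change the argument. (Note also that the paper's own proof of this claim does not invoke the local limit theorem either — that tool only appears in the \textbf{Small-2} case — so bypassing it is not a point of difference.)
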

\begin{claimproof}
For each $q$-coloring of $H_{d,n}$ with exactly $k-1$ monochromatic edges we can create a $q$-coloring with exactly $k$ monochromatic edges in the following way.
There are at least $\frac{n}{2d}-k \geq (\frac{1}{2d}-\eps')n\geq \frac{n}{4d}$ copies of $K_{d,d}$ with no monochromatic edges so we choose one of these.
We recolor this copy of $K_{d,d}$ in any way that gives a single monochromatic edge, which is possible whenever $q \geq 3$.
We had at least $n/4d$ choices of the $K_{d,d}$ to recolor and each coloring of $H_{d,n}$ with $k$ monochromatic edges can be created by this method at most $k\cdot q^{2d}$ times as there are $k$ monochromatic edges and at most $q^{2d}$ colorings of the $K_{d,d}$ that we altered.
Thus
\[
c_k^q(H_{d,n}) \geq \frac{n}{4d \cdot k \cdot q^{2d}}\cdot c_{k-1}^q(H_{d,n})\,,
\]
and so the result follows if 
\[
\frac{1}{4d\eps'q^{2d}} > \delta \cdot q^{-N_1}\,.\qedhere
\]
\end{claimproof}
We then see that
\begin{align*}
c^q_k(G) &= \sum_{r=0}^{dn_1/2} c^q_r(G^\prime) c^q_{k-r}(H) \\
&\le c^q_{0}(G^\prime) c^q_{k}(H) + q^{n_1} \max_{k^\prime < k} c^q_{k^\prime}(H)  \\
&\le (1-\del) c^q_{0}(H^\prime) c^q_{k}(H) + q^{n_1} \max_{k^\prime < k} c^q_{k^\prime}(H) \quad \text{ by \eqref{eq:ColorTopgap} }  \\
&\le (1-\del) c^q_{0}(H^\prime) c^q_{k}(H) + \del \cdot c^q_{k}(H)  \quad \text{ by Claim~\ref{eqColorEpPrime} }  \\
&\le  c^q_{0}(H^\prime) c^q_{k}(H) \\
&\le c^q_k(H_{d,n})\,.\qedhere
\end{align*}

\end{proof}

\begin{proof}[\textbf{Small-2}: $n_1\le N_1$ and $k \ge \eps^\prime n$]
\hfill

Choose $0<\del' = \del'(\eps,  q,d) <1/2$ small enough that for $\lam \le 1-g(q,d,\eps)$, and any $d$-regular $G^\prime $ not containing a copy of $K_{d,d}$, we have $Z^q_{G'}(\lam) \le \frac{1-\del'}{1+2\del'} Z^q_{H_{d,|G'|}}(\lam)$, where $g(q,d,\eps)>0$ is the function from Lemma~\ref{lem:ColorlocalLim}. Such a $\del'$ exists by Proposition~\ref{prop:color-stability}.  
Choose $N= N(q, N_1, \eps, \eps')$ large enough that by Lemma~\ref{lem:ColorlocalLim}, for $n\ge N$, $\eps^\prime n< k <(1-\eps )nd/2q$, $n_1 \le N_1$ and $0\le r \le dn_1/2$ we have
\begin{equation}
\label{eqColorsmall2bound}
(1-\del') c^q_{k-r}(H_{d,n_2})   \le \lam^r \cdot  c^q_k(H_{d,n_2}) \le  (1+\del') c^q_{k-r}(H_{d,n_2}) 
\end{equation}
for some $\lam \le 1- g(q,d,\eps)$.  Now we bound
\begin{align*}
c^q_k(G) &= \sum_{r=0}^{dn_1/2}c^q_r(G')c^q_{k-r}(H)\\
&\leq (1+2\del')  \sum_{r=0}^{dn_1/2}c^q_r(G')\lam^r c^q_k(H)\quad \text{by \eqref{eqColorsmall2bound}} \\
&= (1+2\del') c^q_k(H)Z^q_{G'}(\lam) \\
&\le (1-\del') c^q_k(H)Z^q_{H'}(\lam) \\
&= (1 - \del') \sum_{r=0}^{dn_1/2}c^q_r(H')\lam^r c^q_k(H) \\
&\le \sum_{r=0}^{dn_1/2}c^q_r(H') c^q_{k-r}(H)\quad \text{by \eqref{eqColorsmall2bound}} \\
&= c^q_k(H_{d,n})\,.\qedhere
\end{align*}

\end{proof}

\begin{proof}[\textbf{Large}: $n_1> N_1$]
\hfill

This case is the same as that of {\bf Large} for matchings: we use stability in the form of Proposition~\ref{prop:color-stability} to lower bound $\frac{Z_{H_{d,n_1}}^q(\lam)}{ Z_{G'}^q(\lam) }$ and the local limit theorem, Theorem~\ref{thm:gdedenko}, to bound $ \frac{\pr_{\lam} [m(H_{d,n_2},\boldsymbol \chi) = \lceil kn_2/n \rceil]   }{    \pr_{\lam} [m(H_{d,n_2},\boldsymbol \chi) =k- s]   } $ and  $\pr_{\lam} \big[m(H',\boldsymbol \chi) = \lfloor kn_1/n \rfloor\big]$.
\end{proof}

\bibliography{stability}
\bibliographystyle{abbrv}

\end{document}